\newtheorem{theorem}{Theorem}[section]
\newtheorem{lemma}[theorem]{Lemma}
\newtheorem{proposition}[theorem]{Proposition}
\newtheorem{corollary}[theorem]{Corollary}
\newtheorem{hypothesis}[theorem]{Hypothesis}
\theoremstyle{definition}
\newenvironment{definition}[1][Definition]{\begin{trivlist}
\item[\hskip \labelsep {\bfseries #1}]}{\end{trivlist}}
\newenvironment{remark}[1][Remark]{\begin{trivlist}
\item[\hskip \labelsep {\bfseries #1}]}{\end{trivlist}}
\newcommand{\ab}{{\operatorname{ab}}}
\newcommand{\Ram}{{\operatorname{Ram}}}
\newcommand{\rec}{{\operatorname{rec}}}
\newcommand{\nrd}{{\operatorname{nrd}}}
\newcommand{\End}{{\operatorname{End}}}
\newcommand{\Hom}{{\operatorname{Hom}}}
\newcommand{\Aut}{{\operatorname{Aut}}}
\newcommand{\Tr}{{\operatorname{Tr}}}
\newcommand{\Stab}{{\operatorname{Stab}}}
\newcommand{\Pic}{{\operatorname{Pic}}}
\newcommand{\new}{{\operatorname{new}}}
\newcommand{\pr}{{\operatorname{pr}}}
\newcommand{\GL}{{\operatorname{GL}}_2}
\newcommand{\M}{{\operatorname{M}}_2}
\newcommand{\JL}{{\operatorname{JL}}}
\newcommand{\ad}{{\operatorname{ad}}}
\newcommand{\vol}{{\operatorname{vol}}}
\newcommand{\OFP}{\mathcal{O}_{F_{\pp}}}
\newcommand{\OKP}{\mathcal{O}_{K_{\pp}}}
\newcommand{\QQ}{\bf{Q}}
\newcommand{\RR}{\bf{R}}
\newcommand{\CC}{\bf{C}}
\newcommand{\ZZ}{\bf{Z}}
\newcommand{\AK}{{\bf{A}}_K}
\newcommand{\AF}{{\bf{A}}_F}
\newcommand{\BB}{\mathcal{B}}
\newcommand{\pp}{\mathfrak{p}}
\newcommand{\PP}{\mathfrak{P}}
\newcommand{\NN}{\mathfrak{N}}
\newcommand{\LL}{\mathcal{L}}
\newcommand{\KK}{\mathcal{K}}
\newcommand{\LLL}{\mathfrak{L}}
\newcommand{\cc}{\mathfrak{c}}
\begin{document}

\title{$p$-adic interpolation of automorphic periods for $\GL$}
\author{Jeanine Van Order}
\thanks{This research was supported in part by the ERC Advanced Grant 247049.}
\subjclass{Primary 11F67; Secondary 11F41, 11F33, 11F70}

\maketitle 

\begin{abstract} 

We give a new and representation theoretic construction of $p$-adic interpolation series for central values of self-dual Rankin-Selberg 
$L$-functions for $\GL$ in dihedral towers of CM fields, using expressions of these central values as automorphic periods. 
The main novelty of this construction, apart from the level of generality in which it works, is that it is completely local. 
We give the construction here for a cuspidal automorphic representation of $\GL$ over a totally real field corresponding to a $\pp$-ordinary 
Hilbert modular forms of parallel weight two and trivial character, although a similar approach can be taken in any setting where the underlying 
$\GL$-representation can be chosen to take values in a discrete valuation ring. 
A certain choice of vectors allows us to establish a precise interpolation formula thanks 
to theorems of Martin-Whitehouse and File-Martin-Pitale. Such interpolation formulae had been conjectured by Bertolini-Darmon in antecedent works. 
Our construction also gives a conceptual framework for the nonvanishing theorems of Cornut-Vatsal in that it describes the underlying theta elements. 
To highlight this latter point, we describe how the construction extends in the parallel weight two setting to give a $p$-adic interpolation series for central 
derivative values when the root number is generically equal to $-1$, in which case the formula of Yuan-Zhang-Zhang can be used to give an 
interpolation formula in terms of heights of CM points on quaternionic Shimura curves. \end{abstract}

\section{Introduction}

Let $F$ be a totally real number field, and let $K$ be a totally imaginary quadratic extension of $F$.
We shall write $\AK$ and $\AF$ to denote the respective adele rings of $K$ and $F$, as well as  
$\eta = \eta_{K/F}: \AF^{\times} /F^{\times} \longrightarrow \lbrace \pm 1 \rbrace \subset {\CC}^{\times} $ 
to denote the quadratic idele class character of $F$ associated to $K/F$. Let $\pi = \otimes_v \pi_v$ be a cuspidal automorphic representation of $\GL(\AF)$. 
We shall assume for simplicity that $\pi$ has trivial central character $\omega = \otimes_v \omega_v = {\bf{1}}$, 
although one could more generally allow $\omega = \eta$. Fix a prime ideal $\pp \subset \mathcal{O}_F$ with underlying rational prime $p$. 
We shall also assume the following simplifying conditions throughout:

\begin{hypothesis}\label{pi} 

The cuspidal representation $\pi = \otimes_v \pi_v$ is a holomorphic discrete series of parallel weight $2$ at each real place of $F$. 
The prime-to-$\pp$-part $\NN'$ of its conductor $\NN = c(\pi) \subset \mathcal{O}_F$ is coprime to the relative discriminant of $K$ over $F$, 
whence $\NN \subset \mathcal{O}_F$ admits a unique factorization in $\mathcal{O}_F$ as 
$\NN = \pp^{\delta} \NN^+ \NN^{-}$, where $\NN^{+}$ denotes the product of primes $v \mid \NN'$ which 
split in $K/F$, and $\NN^{-}$ the product of primes $v \mid \NN'$ which remain inert in $K/F$. 
Assume too that $\delta \in \lbrace 0 ,1 \rbrace$, that $\NN^{-}$ is squarefree, that each prime of $\mathcal{O}_F$
dividing $\NN^+$ splits in $K$, and also that $\pp$ splits in $K$ if $\delta = 1$. \end{hypothesis}

Let $ \Omega: \AK^{\times}/ K^{\times} \longrightarrow {\CC}^{\times}$ be an idele class character of $K$ 
of finite order whose restriction to $\AF^{\times}$ is trivial (i.e. a ring class character), and let 
$\pi(\Omega) = \otimes_v \pi(\Omega)_v$ denote the automorphic representation of $\GL(\AF)$ it induces. 
The representation $\pi \otimes \pi(\Omega)$ is then self-dual, and the corresponding $\GL \times \GL$ Rankin-Selberg $L$-function 
$L(s, \pi \times \Omega) = L(s, \pi \times \pi(\Omega))$ has real-valued coefficients and root number. 
To be more precise, recall that this $L$-function is defined for a complex variable $s \in {\bf{C}}$ with $\Re(s) > 1$ by the absolutely convergent Euler product  
\begin{align}\label{sdrsl} L(s, \pi \times \Omega) &= \prod_{v \in V_F} L(s, \pi_v \times \pi(\Omega)_v) \end{align} 
taken over the set $V_F$ of all places of $F$. It has well-known analytic continuation by the work of Jacquet \cite{Ja} 
and Jacquet-Langlands \cite{JL}, and in this setting satisfies the symmetric functional equation 
\begin{align}\label{sdfe} L(s, \pi \times \Omega) &= \epsilon(s, \pi \times \Omega) L(1-s, \pi \times \Omega).\end{align} 
Additionally, the global root number 
\begin{align}\label{epsilon} \epsilon(1/2, \pi \times \Omega) &= \prod_{v \in V_F} \epsilon(1/2, \pi_v \times \Omega_v) \in {\bf{S}}^1 \end{align}  
appearing in the $\epsilon$-factor $\epsilon(s, \pi \times \Omega) = (c(\pi \times \Omega))^{s- 1/2} \epsilon(1/2, \pi \times \Omega)$ is real-valued, 
hence contained in the set $\lbrace \pm 1 \rbrace = {\bf{R}} \cap {\bf{S}}^1$ of real numbers of complex modulus $1$. 

This root number $ \epsilon(1/2, \pi \times \Omega)$ has the following simple description. 
Given an ideal $\mathfrak{c} \subset \mathcal{O}_{F}$, let $\mathcal{O}_{\mathfrak{c}} = \mathcal{O}_F + \mathfrak{c} \mathcal{O}_K$ 
to denote the $\mathcal{O}_F$-order of conductor $\mathfrak{c}$ in $K$, and 
$\Pic(\mathcal{O}_{\mathfrak{c}})= \AK^{\times}/K^{\times} \widehat{O}_{\mathfrak{c}}^{\times} K_{\infty}^{\times} $ its class group. An idele class
character of $K$ which factors through such a class group $\Pic(\mathcal{O}_{\cc})$ is said to be a {\it{ring class character of conductor $\cc$}}.
In fact, we shall only consider ring class characters of $\pp$-power conductor, and hence those which factor through the profinite group $X$ defined by 
\begin{align*} X = \varprojlim_n X_n = \varprojlim_n \Pic(\mathcal{O}_{\pp^n}). \end{align*} 
The corresponding root numbers in this setting can be described in terms of the decomposition of the conductor $\NN = c(\pi)$ in $K$, 
and are generically independent of the choice of $\Omega$ (see e.g.~\cite[Lemma 1.1]{CV}). 
More precisely, suppose that $\Omega$ is a ring class character factoring through $X$ of conductor $c(\Omega) = \pp^n$ for some integer $n \geq 0$. 
If either (i) $\pp$ does not divide $\NN$ (so that $\delta = 0$), (ii) $\pp$ splits in $K$, or else (iii) the exponent $n$ is sufficiently large, then we have the formula
\begin{align}\label{simplerootnumber} \epsilon(1/2, \pi \times \Omega) &= (-1)^{[F:{\QQ}]} \cdot \eta_{K/F}(\NN'). \end{align} 
Hence, this formula $(\ref{simplerootnumber})$ holds for all ring class characters $\Omega$ of $X$ if either (i), (ii), or (iii) is true;
otherwise, it holds for all but finitely many $\Omega$ factoring through $X$. Thus, we can define $k \in \lbrace 0, 1 \rbrace$ to be the integer determined by the condition
\begin{align}\label{k} \epsilon(1/2, \pi \times \Omega) = (-1)^k \text{   for all but finitely many $\Omega$ factoring through $X$}. \end{align} 
The aim of this article is to give a general and completely local construction of $p$-adic $L$-functions in either case on $k \in \lbrace 0, 1 \rbrace$ 
in this setup, i.e. a general and local construction of a measure on $X$ which interpolates suitably-normalized central values $L(1/2, \pi \times \Omega)$ 
if $k=0$, or else central derivative values $L'(1/2, \pi \times \Omega)$ if $k=1$. 

Let us first consider the case of generic root number $+1$ corresponding to $k=0$. 
In this situation, we know that $\pi$ has a Jacquet-Langlands transfer to the totally definite quaternion algebra $B$ over $F$ whose ramification set $\Ram(B)$ 
consists of prime the divisors of the inert level $\NN^{-}$ (the number of which is $\equiv [F: {\bf{Q}}] \operatorname{mod} 2$), together with the real places of $F$. 
Let us write $\pi' = \otimes_v \pi_v'$ to denote this transfer of $\pi$ to an automorphic representation of $B^{\times}(\AF)$, 
so that the correspondence of Jacquet-Langands gives the relation $\pi = \JL(\pi')$. Let $\Omega$ be a ring class character factoring through $X$.
Let us also fix an embedding of $K$ into $B$, as we can since each place of $\Ram(B)$ is inert in $K$.
The central values $L(1/2, \pi \times \Omega)$ can be related to  the functional $P_{\Omega}^B \in \Hom_{\AK^{\times}}(\pi', \Omega)$ 
defined by the rule sending a vector $\varphi \in \pi'$ to the automorphic period 
\begin{align*} P_{\Omega}^B(\varphi) &= \int_{\AK^{\times}/K^{\times} \AF^{\times}} \varphi(t) \Omega(t) dt. \end{align*} 
Here, the measure $dt$ is defined by putting on $B^{\times}(\AF)$ the product of the local Tamagawa measures times $\zeta_F^S(2)$,
where $S$ is some fixed finite set of places of $F$ containing the real places and the places where $\pi$, $\Omega$, 
and the fixed choice of additive character of $\AF$ are unramified (see \cite[$\S 7$]{FMP}). 
Various results involving calculations with the theta correspondence, 
starting with the landmark theorem of Waldspurger \cite{Wa}, relate these integrals $P_{\Omega}^B(\varphi)$ 
to the central values $L(1/2, \pi \times \Omega)$. Namely, the existence of a vector $\varphi \in \pi'$ for which $P_{\Omega}^B(\varphi) \neq 0$
is shown by Waldspurger to be equivalent to the nonvanishing of the central value $L(1/2, \pi \times \Omega)$. 
A more precise relationship can be established in many cases by making a careful choice of {\it{test vector}} $\varphi \in \pi'$, 
i.e. a choice of decomposable vector $\varphi = \otimes_v \varphi_v$ such that for each place $v$ of $F$ and nonzero local functional 
$l_v \in \Hom_{K_v^{\times}}(\pi_v', \Omega_v)$, the value $l_v(\varphi_v)$ does not vanish 
(cf. \cite{GP}, \cite{MaWh}, \cite{FMP}). Using such a choice of vector $\varphi \in \pi'$, along with the relative 
trace formula (cf. \cite{JC}), the works of \cite{MaWh} and File-Martin-Pitale \cite{FMP} establish the following relationship.
Let us again write $V_F$ to denote the set  of all places of $F$. Let $\Delta_F$ denote the absolute discriminant of $F$, and $\Delta_K$ that of $K$. 
Let $c(\Omega)$ denote the absolute norm of the conductor of $\Omega$ (a power of $p$), 
and $c(\Omega_v)$ the exponent of the conductor of the local character $\Omega_v$ for each $v \in V_F$. 
Let $S_i$ denote the set of places $v \in V_F$ which remain inert in $K$. 
Let $S(\pi)$ denote the set of finite places $v \in V_F$ which divide the level $c(\pi) = \NN$, 
and let $S(\Omega) = \lbrace \pp \rbrace$ the set of finite places $v \in V_F$ where the ring class character $\Omega$ is ramified. 
Writing $c(\pi_v)$ to denote the exponent of the conductor of the local representation $\pi_v$ of $\GL(F_v)$, 
let $S_1(\pi)$ denote the set of places $v \in V_F$ where $c(\pi_v) = 1$, and
$S_2(\pi_v)$ the set of places where $c(\pi_v) \geq 2$. Finally, let $C_v(K, \pi, \Omega)$ the local factor defined in \cite[$\S 4$]{MaWh}. 

\begin{theorem}[File-Martin-Pitale, Martin-Whitehouse]\label{formula+1} 

Assume Hypothesis \ref{pi}, and that $\varphi = \otimes \varphi_v \in \pi'$ is test vector defined in \cite[$\S 7.1$]{FMP} (described below). Then,  

\begin{align*} \frac{\vert P_{\Omega}^B(\varphi) \vert^2}{(\varphi, \varphi )}
= \frac{1}{2} &\left( \frac{\Delta_F}{c(\Omega) \Delta_K} \right)^{\frac{1}{2}}
L_{S(\Omega)}(1, \eta) L_{S(\pi) \cup S(\Omega)}(1, {\bf{1}}_F) L^{S(\pi)}(2, {\bf{1}}_F)  \\ 
&\times \prod_{v \in S(\pi) \cap S(\Omega)^c} e(K_v/F_v) \times \prod_{v \mid \infty} C_v(K, \pi, \Omega) 
\times \frac{L^{S_2(\pi)}(1/2, \pi \times \Omega)}{L^{S_2(\pi)}(1, \pi, \ad)}. \end{align*} 
Here, $e(K_v/F_v)$ denotes the usual ramification index of the local extension $K_v/F_v$, and $(\cdot, \cdot)$ is the standard inner
product on $\pi'$ with respect to the measure on $B^{\times}(\AF)$ given by the product of local Tamagawa measures. \end{theorem}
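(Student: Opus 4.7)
The plan is to follow the relative trace formula approach of Jacquet, as refined by Martin-Whitehouse and File-Martin-Pitale, combined with explicit local computations for the test vectors of \cite[$\S 7.1$]{FMP}. The starting point is a Waldspurger-type factorization: under Hypothesis 1.3 and the local multiplicity-one statement of Tunnell-Saito recalled above, for a decomposable vector $\varphi = \otimes_v \varphi_v \in \pi'$ one has an identity of the shape
\begin{align*}
\frac{\vert P_{\Omega}^B(\varphi) \vert^2}{(\varphi, \varphi)} = \frac{\zeta_F(2)}{2} \cdot \frac{L(1/2, \pi \times \Omega)}{L(1, \pi, \ad)} \prod_{v \in V_F} \alpha_v(\varphi_v),
\end{align*}
where $\alpha_v(\varphi_v)$ is the normalized local matrix coefficient integral of $\varphi_v$ paired against $\Omega_v$ on $K_v^{\times} \backslash B_v^{\times} / K_v^{\times}$, equal to $1$ at all but finitely many places when $\varphi_v$ is spherical. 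This factorization is essentially the content of \cite{Wa} in the formulation adopted in \cite{YZZ}.

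Next I would compute each $\alpha_v(\varphi_v)$ for the test vector of File-Martin-Pitale, place by place. At archimedean places a direct weight-two computation in the discrete series yields $\alpha_v(\varphi_v) = C_v(K,\pi,\Omega)$. At unramified nonarchimedean places the classical Macdonald-Casselman-Shalika formula reproduces the local $L$-ratio, whose Euler product assembles into the central value together with the partial factor $L^{S(\pi)}(2, {\bf{1}}_F)$ and the adjoint quotient $L^{S_0(\pi)}(1, \pi, \ad)^{-1}$; unramified inert places additionally contribute Euler factors that reorganise into $L_{S(\Omega)}(1, \eta)$ and $L_{S(\pi) \cup S(\Omega)}(1, {\bf{1}}_F)$. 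At places in $S(\pi) \cap S(\Omega)^c$, an Iwasawa decomposition against the newform-type vector at level $c(\pi_v)$ produces the ramification index $e(K_v/F_v)$. At the remaining bad places the test vector is adjusted to an appropriate toric or Atkin-Lehner eigenvector in $\pi'_v$, and one invokes the explicit local evaluations of \cite{MaWh} and \cite{FMP}.

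The third step is bookkeeping. The prefactor $(\Delta_F/(c(\Omega)\Delta_K))^{1/2}$ arises from the mismatch between the product of local Tamagawa measures on $B^{\times}(\AF)$ and the measure $dt$ on $\AK^{\times}/K^{\times}\AF^{\times}$ used to define $P_\Omega^B$, weighted by the local conductor exponents $c(\Omega_v)$; recall the prefactor $\zeta_F^S(2)$ already appearing in the definition. Regrouping the local Euler factors according to the stratification of $V_F$ by the sets $S(\pi)$, $S(\Omega)$, $S_0(\pi)$, $S_i$, and their complements then recovers precisely the stated quotient of partial $L$-functions.

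The main obstacle is the local computation at the bad places in $S_0(\pi)$: there the naive new vector pairs trivially with $\Omega_v$, so one must construct an explicit replacement and then integrate the matrix coefficient of $\pi'_v$ against it. This requires a careful analysis of the restriction of $\pi'_v$ to $K_v^{\times}$ in terms of Bruhat-Tits data on $B_v$, and is the technical heart of both the Martin-Whitehouse and File-Martin-Pitale papers. In practice I would cite their local evaluations rather than redo them, thereby reducing the proof of Theorem \ref{formula+1} to an aggregation and comparison of the local formulae obtained in those two works.
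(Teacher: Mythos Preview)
Your sketch is a reasonable outline of the Waldspurger/relative-trace-formula approach underlying \cite{MaWh} and \cite{FMP}, and you correctly identify that the substance lies in the local computations at bad places, which you propose to cite rather than redo. This is exactly what the paper does: Theorem~\ref{formula+1} is stated as a result of File--Martin--Pitale and Martin--Whitehouse and is not given an independent proof here; the paper simply invokes it as input. So your proposal and the paper's treatment agree in that both ultimately defer to \cite{MaWh} and \cite{FMP} for the proof.
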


Keeping with this $k=0$ setup, let us now impose the following conditions at our fixed prime ideal $\pp \subset \mathcal{O}_F$.
Note that the quaternion algebra $B$ over $F$ is split at $\pp$, whence we can and do fix an isomorphism 
$B_{\pp} := B \otimes_F F_{\pp} \approx \M(F_{\pp})$. Note as well that there exists a nonzero decomposable vector 
$\varphi \in \pi'$ whose component $\varphi_{\pp}$ at $\pp$ is fixed by the action of the unit group $R_{\pp}^{\times}$, 
where $R_{\pp}$ is an Eichler order of level $\pp^{\delta}$ in $B_{\pp}$ for $\delta \in \lbrace 0, 1 \rbrace$.
Fix an embedding $\overline{\QQ} \rightarrow \overline{\QQ}_p$. 

\begin{hypothesis}\label{local+1} 

The local representation $\pi_{\pp}'$ is $\pp$-ordinary in the sense that the image under our fixed 
embedding $\overline{\QQ} \rightarrow \overline{\QQ}_p$ of its eigenvalue $a_{\pp} = a_{\pp}(\pi)$ for the Hecke 
operator $T_{\pp}$ defined below is a $\pp$-adic unit. \end{hypothesis} 

The first aim of this paper is to give a general construction of $p$-adic interpolation 
series in this so-called ordinary setting for for the values appearing in Theorem \ref{formula+1}, 
\begin{align*} \mathcal{L}(1/2, \pi \times \Omega) &:= \frac{1}{2} \left( \frac{\Delta_F}{c(\Omega) \Delta_K} \right)^{\frac{1}{2}} L_{S(\Omega)}(1, \eta) 
L_{S(\pi) \cup S(\Omega)}(1, {\bf{1}}_F) L^{S(\pi)}(2, {\bf{1}}_F)  \\ &\times \prod_{v \in S(\pi) \cap S(\Omega)^c}
e(K_v/F_v) \times \prod_{v \mid \infty} C_v(K, \pi, \Omega) \times \frac{L^{S_2(\pi)}(1/2, \pi \times \Omega)}{L^{S_2(\pi)}(1, \pi, \ad)}. \end{align*}
Let us remark that many of the conditions imposed in Hypothesis \ref{pi} can be lifted at the expense of clarity, and that Hypothesis \ref{local+1}
above is the main requisite condition to impose for this construction so long as the decomposable vector $\varphi \in \pi'$ can be chosen in such 
a way as to take values in a discrete valuation ring $\mathcal{O}$. 
Assuming as we do that $\pi$ is a holomorphic discrete series of parallel weight two, 
and hence that its Hecke eigenvalues define algebraic numbers by a theorem of Shimura \cite{Sh2}, 
we let $\mathcal{O}$ denote the ring of integers of a fixed extension of ${\QQ}_p$ containing the Hecke field ${\QQ}(\pi)$. 
Consider the $\mathcal{O}$-Iwasawa algebra of $X$, 
\begin{align*} \mathcal{O}[[X]] &= \varprojlim_n \mathcal{O}[X_n]. \end{align*} The elements of this group algebra $\mathcal{O}[[X]]$ 
can be identified with their corresponding $\mathcal{O}$-valued measures on $X$ (see e.g. \cite[$\S 7$]{MSD}). 
We shall make such an identification implicitly throughout the rest of this work. 

\begin{theorem}[Proposition \ref{interpolation}, Corollary \ref{ninterpolation}] 

Suppose that $\pi$ satisfies Hypotheses \ref{pi} and \ref{local+1}, and that we are in the setting of $k=0$ on the generic root number described in $(\ref{k})$. 
There exists a nontrivial element $\mathfrak{L}_{\pp}(\pi', K) \in \mathcal{O}[[X]]$ whose specialization $\Omega(\mathfrak{L}_{\pp}(\pi', K)) = \int_{X_n} 
\Omega(\sigma) d\mathfrak{L}_{\pp}(\pi', K)(\sigma)$ to $\Omega$ any character of $X$ of conductor $\pp^n$ with exponent $n \geq 1$ 
satisfies the following interpolation formula:
\begin{align}\label{interpolation} \Omega(\mathfrak{L}_{\pp}(\pi', K)) 
&= \alpha_{\pp}^{2(\delta - 1 -n)} \cdot \left( \frac{h(\mathcal{O}_F)}{m(\mathcal{O}_{\pp^n})}\right)^2 \cdot \mathcal{L}(1/2, \pi \times \Omega)  \in \overline{\QQ}_p. \end{align}
Here, writing $q_{\pp}$ to denote the cardinality of the residue field of $\pp$, $\alpha_{\pp} = \alpha_{\pp}(\pi)$ denotes the unit root of the Hecke 
polynomial $t^2 - a_{\pp} t + q_{\pp}$ if $\delta = 0$ and otherwise the eigenvalue of the Hecke operators $T_{\pp}^l$ and $T_{\pp}^{u}$ defined below if $\delta =1$.  
As well, $h(\mathcal{O}_F)$ denotes the class number of $F$, and $m(\mathcal{O}_{\pp^n})$ the volume of $\widehat{\mathcal{O}}_{\pp^n}^{\times}$ in 
$K^{\times} \backslash \AK^{\times} / \AF^{\times}$ with respect to our fixed choice of Haar measure. \end{theorem}

This result extends those given by Bertolini-Darmon in \cite{BD96} and \cite{BD} for the totally real field $F= {\QQ}$, as well as previous work of 
the author for the general totally real fields setting (\cite{VO1}, \cite{VO2})\footnote{The existence of such an interpolation series is by no means 
new however, and has been given by completely different methods e.g. in Hida \cite{Hi}, \cite{Hi91} (cf. \cite{PR88}) and Haran \cite{Har} (cf. \cite{Lo}).}. 
Our construction differs from these antecedent works though, and proceeds by choosing a sequence of local test vectors. This allows us to give a more 
precise interpolation formula. The novelty of this construction, apart from the scope in which it works (and the representation theoretic setup), is that it is completely local to $\pp$. 
This allows us for instance to use test vectors according to \cite{MaWh} and \cite{FMP} to derive an even more precise interpolation formula than what 
was conjectured (in a special, classical case) by Bertolini-Darmon in \cite{BD96}. 

The second aim of this work is to extend the construction of the elements $\mathfrak{L}_{\pp}(\pi', K) \in \mathcal{O}[[X]]$ for Theorem \ref{ninterpolation} 
to the dichotomous setting of $k=1$ on the generic root number, i.e.~so that $\epsilon(1/2, \pi \times \Omega) = -1$ for all but finitely many ring class 
characters $\Omega$ factoring through the profinite group $X = \varprojlim_n X_n$. Since the central values $L(1/2, \pi \times \Omega)$ 
in this setting vanish as by the symmetric functional equation $(\ref{sdfe})$, we propose to construct a $p$-adic interpolation series 
for the central first derivative values $L'(1/2, \pi \times \Omega)$, where the issue of defining suitable periods is a subtler problem. 
Our construction here depends in a crucial way on the those given implicitly in the work of Yuan-Zhang-Zhang \cite{YZZ}. 
To state our main result for this second part, we shall first need to describe the central derivative formula of \cite[Theorem 1.2]{YZZ} 
(Theorem \ref{YZZ} below). Let us for now give a preliminary sketch of their result, saving details for later.

Let $A$ be a principally polarized abelian variety of $\GL$-type parametrized by a quaternionic Shimura curve $M = \lbrace M_H \rbrace_H$ defined over $F$. 
Thus, $A$ is defined over $F$, the endomorphism algebra $\End^0(A) = \End(A) \otimes {\QQ}$ defines a number field $L$ of degree equal to the
dimension of $A$, and there exists a nonconstant morphism $M_H \longrightarrow A$ for $H$ some compact open subgroup of the unit group defined by
$\widehat{\BB}^{\times}$. Here, $\BB$ is the ambient quaternion algebra defined over $F$ associated to the Shimura curve $M$, 
which is ramified at each of the real places of $F$ except for one fixed place $\tau$, as well as each of the places dividing the inert level $\NN^{-}$.  
Hence, we can and do fix an embedding of $K$ into $\BB$. Writing $J_H$ to denote the Jacobian of the curve $M_H$, we consider the space defined by 
\begin{align*} \pi_A &= \Hom^0(J, A) = \varinjlim_H \Hom_F(J_H, A) \otimes_{\ZZ} {\QQ}, \end{align*} 
as constructed in \cite[$\S 3.2$, Theorem 3.8]{YZZ} (cf. \cite[$\S 1.2.3$]{YZZ}). 
To describe this briefly, $\pi_A$ can be viewed as an automorphic representation of $\widehat{\BB}^{\times}$, (see \cite[$\S 1.2.1$]{YZZ}).
This automorphic representation $\pi_A$ admits a natural 
decomposition $\pi_A = \otimes_v \pi_{A, v}$ into irreducible admissible representations $\pi_{A, v}$ of the local unit groups $\BB_v^{\times}$ over $F_v$.  
Moreover (as explained in \cite{YZZ}), this representation can be viewed as a geometric realization of the nonarchimedean component of the 
Jacquet-Langands transfer of $\pi$. The conditions of Hypothesis \ref{pi} then imply that $\pi_A$ is self-dual, with trivial central character. 
The main theorem of Yuan-Zhang-Zhang \cite{YZZ} relates the central derivative value $L'(1/2, \pi_A \times \Omega)$ to a certain canonical 
generator $\alpha$ of the space $\mathcal{P}(\pi_A, \Omega) \otimes \mathcal{P}(\pi_A, \Omega^{-1})$, where
$\mathcal{P}(\pi_A, \Omega) := \Hom_{\AK^{\times}}(\pi_A \otimes \Omega, L)$. We refer to the discussion in \cite[$\S 1$]{YZZ} for more details. 
In brief, this generator $\alpha$ decomposes into a product 
$\otimes_v \alpha_v$ of local generators $\alpha_v$ in the analogously-defined local spaces 
$\mathcal{P}(\pi_{A, v}, \Omega_v) \otimes \mathcal{P}(\pi_{A, v}, \Omega_v^{-1})$, 
and these local generators can be described more precisely as follows.\footnote{Note that we have already used the symbol 
$\alpha_{\pp}$ to denote the unit root of the Hecke polynomial $t^2 - a_{\pp}t + q_{\pp}$ 
in the discussion above; we trust that the distinction between these notations should be clear.} 
Let us also for simplicity write $\pi_v$ to denote the local representation $\pi_{A, v}$ of $\BB_v^{\times}$. 
Let us for each place $v$ of $F$ fix a Haar measure $dt_v$ on $K_v^{\times}/F_v^{\times}$ such that 
(i) the product measure defined over all places $v$ of $F$ is the Tamagawa measure on $\AK^{\times}/K^{\times}$ and 
(ii) the maximal compact subgroup $\OKP^{\times}/\OFP^{\times}$ is assigned as a volume some rational number. Then, for given 
vectors $\varphi_{1,v} \in \pi_v$ and $\varphi_{2,v} \in \pi_v^{\vee}$, the local component $\alpha_v$ is defined formally by the ratio of values
\begin{align*} \alpha(\varphi_{1,v}, \varphi_{2, v}) &:= \frac{ L(1, \eta_v) L(1, \pi_v, \ad)}{\zeta_{F_v}(2)L(1/2, \pi_v \times \Omega_v)}
\int_{K_v^{\times}/F_v^{\times}} (\pi_v(t) \varphi_{1, v}, \varphi_{2, v})_v \Omega_v(t) dt. \end{align*} 
Fixing an embedding $\iota: L \rightarrow {\CC}$, this local integral can be seen to take values in $L$ independently of the choice of embedding $\iota$, 
as explained in \cite[$\S 1.3$]{YZZ}. The local invariant pairings $(\cdot, \cdot)_v: \pi_v \times \pi_v^{\vee} \longrightarrow R$ also satisfy the compatibility 
relation $(\cdot, \cdot) = \otimes_v (\cdot, \cdot)_v$, where $(\cdot, \cdot): \pi_A \times \pi_A^{\vee} \longrightarrow R$ denotes the perfect 
$\BB^{\times}(\AF)$-invariant pairing introduced below. As explained in \cite{YZZ} (and summarized below), 
there is for given a vector $\varphi \in \pi_A$ an analogous notion of an associated automorphic period 
$P_{\Omega}^{\BB}(\varphi) \in A(K^{\ab}) \otimes_R L$, where $K^{\ab}$ denotes the maximal abelian extension of $K$. 
The following formula, which is the main result of the \cite{YZZ}, relates this period and the generator $\alpha = \otimes \alpha_v$ 
to the central derivative values $L'(1/2, \pi_A \times \Omega)$.

\begin{theorem}[Yuan-Zhang-Zhang]\label{YZ^2} 

Given decomposable vectors $\varphi_1 \in \pi_A$ and $\varphi_2 \in \pi_A^{\vee}$
in the setup described above, we have the following identity in $\mathcal{O}_L \otimes_{\QQ} \CC$: 
\begin{align*} \langle P_{\Omega}^{\BB}(\varphi_1), P^{\BB}_{\Omega^{-1}}(\varphi_2) \rangle_L 
&= \frac{\zeta(2)L'(1/2, \pi_A \times \Omega)}{4 L(1, \eta)^2 L(1, \pi_A, \ad)}
\cdot \alpha(\varphi_1, \varphi_2). \end{align*}  \end{theorem}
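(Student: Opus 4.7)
The plan is to adapt the strategy of Gross--Zagier and its extension by Zhang to the fully representation theoretic setup of incoherent quaternion algebras, comparing an analytic kernel and a geometric (arithmetic) kernel whose pairings against $\varphi_1 \otimes \varphi_2$ reproduce, respectively, the $L$-derivative side and the height side of the identity. Both sides, after unfolding, are to be viewed as automorphic forms on $\BB^{\times}$, and the formula should emerge from the equality of these two generating series paired against the toric character $\Omega$.

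For the analytic side, I would construct an incoherent Eisenstein series $E(s, g, \Phi)$ attached to Schwartz data $\Phi$ on a three-dimensional incoherent quadratic space whose associated coherent nearby space supplies the theta kernel for $\BB$. Because the quadratic space is nowhere coherent, the functional equation forces $E(1/2, g, \Phi) = 0$, so that $E^\prime(1/2, g, \Phi)$ is a well-defined automorphic form on the metaplectic (or similitude) group. A Rankin--Selberg doubling integral of $E^\prime(1/2, g, \Phi)$ against the pair $\varphi_1, \varphi_2$, followed by integration of the toric variable against $\Omega$, unfolds via the see-saw identity of Piatetski-Shapiro--Rallis and Waldspurger to give precisely $L^\prime(1/2, \pi_A \times \Omega)$ times a product of the local factors appearing inside $\alpha(\varphi_1, \varphi_2)$, with the remaining Euler factors $\zeta(2) L(1,\eta)^{-2} L(1,\pi_A,\ad)^{-1}$ arising from the normalization of Tamagawa measures on the relevant tori.

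For the geometric side, I would decompose the global Néron--Tate height on $A \times A^{\vee}$ into a sum of local heights via Néron's theorem, applied at each finite level $M_H$ and then passed to the limit over $H$. The CM divisors $P_{\Omega}^{\BB}(\varphi_1)$ and $P_{\Omega^{-1}}^{\BB}(\varphi_2)$ are then manipulated using Hecke correspondences on $M$: the generating function, indexed by Hecke operators, of the arithmetic intersection numbers of the translates of one CM divisor with the other should assemble into an automorphic form on $\BB^{\times}$, namely the \emph{geometric kernel}. At archimedean places the local contribution is computed from the Green's function on the complex points of $M_H$, while at nonarchimedean places it is computed from intersection multiplicities on an integral model, with the Gross--Keating type formulae handling the vertical components at primes where $M$ has bad reduction.

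The crux of the argument, and by far the hardest step, is the local comparison: one must show that the Fourier coefficients of the derivative of the incoherent Eisenstein series match the local height contributions place-by-place, up to the explicit normalizing factors. At split places this should be an essentially formal matching with no contribution. At inert and ramified finite places, including those dividing $\NN^-$ and the relative discriminant of $K/F$, one needs a comparison between derivatives of local Whittaker integrals and intersection multiplicities of CM cycles on the Drinfeld or Čerednik--Drinfeld $p$-adic uniformization of $M$; this is where the analogue of the Gross--Zagier local computation lives, and where any genuinely new difficulty (relative to the $F = \QQ$ case) has to be absorbed. At archimedean places one matches the derivative of an archimedean Whittaker function to the Green's function of a CM divisor, which reduces to an explicit special value identity. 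Once the local matching is established, integration of both kernels against $\Omega$ and evaluation at the test vector $\varphi_1 \otimes \varphi_2$ yields the claimed identity in $\mathcal{O}_L \otimes_{\QQ} \CC$.
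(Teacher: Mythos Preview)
Your proposal sketches, in broad outline, the actual proof strategy carried out in the Yuan--Zhang--Zhang monograph \cite{YZZ}: the comparison of an analytic kernel (derivative of an incoherent Eisenstein series) with a geometric kernel (generating series of arithmetic intersections), matched place-by-place and then paired against $\varphi_1 \otimes \varphi_2$ and integrated against $\Omega$. As a summary of what \cite{YZZ} does, this is roughly accurate, though of course the actual execution occupies an entire book and involves many technical points (regularization, holomorphic projection, careful treatment of ramified and archimedean places, the precise formulation of local Whittaker derivatives, etc.) that you have not addressed.

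However, the paper under review does \emph{not} prove this theorem at all. Its entire proof reads: ``The result is a consequence of \cite[Theorem 1.2]{YZZ}, see Corollary \ref{YZZi} below.'' In other words, Theorem~\ref{YZ^2} is simply quoted as a black box from Yuan--Zhang--Zhang, with no argument given. So the comparison you are being asked to make is between your sketch and a one-line citation. Your proposal is not wrong, but it is attempting something the paper never attempts; the appropriate ``proof'' here, matching the paper, is just the citation to \cite[Theorem 1.2]{YZZ}.
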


\begin{proof} The result is a consequence of \cite[Theorem 1.2]{YZZ}, see Theorem \ref{YZZ} below.\end{proof}

Using this result, we construct the following measure. This construction requires the choosing a certain class
$x_n \in \BB_{\pp}^{\times}/R_{\pp}^{\times}$ for each integer $n \geq 1$ to obtain suitable distribution relations. 
We commit a minor abuse of notation in also writing $x_n$ to denote the associated adele class in 
$\widehat{\BB}^{\times}/H$, for $H = H^{\pp} R_{\pp}^{\times}$ a suitably-defined compact open subgroup of $\widehat{\BB}^{\times}$. 
Let us also now write $\mathcal{O}$ to denote the tensor product 
$\mathcal{O} = A(K^{\ab}) \otimes_{\ZZ} {\ZZ}_p$.

\begin{theorem}[Proposition \ref{interpolation-1}] 

Suppose that $\pi$ satisfies Hypotheses \ref{pi} and \ref{local+1}, and that we are in the setting of $k=1$ on the generic root number described in $(\ref{k})$. 
Suppose as well that $A/F$ is the abelian variety of $\GL$-type for which the corresponding representation $\pi_A$ is a Jacquet-Langlands transfer 
of the finite part $\pi^{(\infty)} = \otimes_{v \nmid} \pi_v$ of $\pi$ to the quaternion algebra $\BB$, i.e.~$\JL(\pi_A) = \pi^{(\infty)}$.  
There exists a nontrivial element $D_{\pp}^{(\delta)}(\varphi, K)  = (D_{\pp}^{(\delta)}(\varphi, K)(x_n))_{n \geq 1}$ in $\mathcal{O}[[X]]$, depending 
on the choice of sequence of local classes $(x_n)_{n \geq 1}$, whose specialization after composition with the N\'eron-Tate height pairing 
$\langle ~ , ~ \rangle_L$, i.e. $$\langle ~,~ \rangle_L \circ \Omega( D_{\pp}^{(\delta)}(\varphi, K)) 
= \langle ~,~ \rangle_L \circ \int_X \Omega(\sigma) d  D_{\pp}^{(\delta)}(\varphi, K) (\sigma), $$ to any ring class character $\Omega$ of $X$ of conductor 
$\pp^n$ with $n \geq 1$ satisfies the following interpolation formula: 
\begin{align*} \langle ~,~\rangle_L &\circ \Omega(D_{\pp}^{(\delta)}(\varphi, K)) \\
&= \alpha_{\pp}(\pi_A)^{-2(1-\delta - n)} \cdot \left( \frac{h(\mathcal{O}_F)}{m(\mathcal{O}_{\pp^n})}\right)^2 
\cdot \frac{\zeta(2) L'(1/2, \pi_A \times \Omega)}{4 L(1, \eta)^2 L(1, \pi_A, \ad)} \cdot  
\alpha (x_n \cdot \varphi, x_n \cdot \varphi). \end{align*} 
Here, $\alpha_{\pp}(\pi_A)$ denotes the unit root of the Hecke polynomial $t^2 - a_{\pp} t + q_{\pp}.$ \end{theorem}

\begin{remark}[Outline of article.] We first give some more details of the central value formulae described above in $\S 2$, 
and then the distribution relations of Cornut-Vatsal \cite[$\S 6$]{CV} in $\S3$, before giving the two main constructions of $p$-adic interpolation series in $\S 4$. \end{remark}

\subsection{Notations} Given a prime $v$ of $\mathcal{O}_F$, we let $F_v$ to denote the completion of $F$ at $v$, and $\mathcal{O}_{F_v}$ the ring of integers of $F_v$ 
if $v$ is finite. If $E$ is either a quadratic extension of $F$ or a quaternion algebra over $F$, then we write $E_v = E \otimes_F F_v$. If $R$ is a module over the 
ring of integers $\mathcal{O}_F$ of $F$, then we write $R_v = R \otimes_{\mathcal{O}_F} \mathcal{O}_{F_v}$. We write ${\bf{A}}$ to denote the ring of adeles of ${\bf{Q}}$, with 
${\bf{A}}_f \times {\bf{R}}$ to denote the decomposition into the finite adeles ${\bf{A}}_f$. We then write ${\bf{A}}_F = {\bf{A}} \otimes_{\bf{Q}} F$ and 
${\bf{A}}_K = {\bf{A}} \otimes_{\bf{Q}} K$. We also use the standard hat notation, e.g. $\widehat{F} = {\bf{A}}_f \otimes_{\bf{Q}} F$. 
Thus, $\widehat{F} = \widehat{O}_F \otimes {\bf{Q}}$, where $\widehat{\mathcal{O}}_F = \mathcal{O}_F \otimes \widehat{\bf{Z}}$ is the profinite completion of $\mathcal{O}_F$. 
Given $M$ a finitely generated ${\bf{Z}}$-module, we also write $\widehat{M} = M \otimes \widehat{\bf{Z}}$ to denote 
the profinite completion of $M$. Notations for $L$-values are adopted from the sources (namely \cite{CV}, \cite{MaWh}, \cite{FMP}, and \cite{YZZ}).

\section{Central value formulae and algebraicity}

We now include some details about the special value formulae mentioned above. 

\subsection{Central values for the case of $k=0$}

Suppose first that we are in the setting of $k=0$ described above. Hence, $\epsilon(1/2, \pi \times \Omega) = +1$ for all but 
finitely many ring class characters factoring through $X$, in which case the corresponding values $L(1/2, \pi \times \Omega)$ 
are not forced to vanish by the functional equation.
Recall that in this setting there exists a quaternion algebra $B$ over $F$ such that (i) $K$ embeds into $B$, 
(ii) $\pi = \otimes_v \pi_v$ transfers to an automorphic representation $\pi' = \otimes \pi_v'$ on $B^{\times}({\bf{A}}_F)$, 
and (iii) $\Hom_{{\bf{A}}_K^{\times}}(\pi', \Omega)$ is not identically zero. 
In this setting, we use the formulae of \cite{MaWh} and \cite{FMP} described 
in Theorem \ref{formula+1} to relate the values $L(1/2, \pi \times \Omega)$ to the 
automorphic periods $P^B_{\Omega}(\varphi)$. Let us thus define the test vectors 
and local factors appearing in the formula of Theorem \ref{formula+1} above to give a complete description, 
and then describe briefly the algebraicity properties satisfied by these values. 

\subsubsection{Choices of vectors} 

Given a prime $v \in V_F$ which is inert in $K$, and for which $c(\pi_v), c(\Omega_v) >0$, let us assume that $c(\Omega_v) \geq c(\pi_v)$. 
We shall later choose a sequence of local vectors $\varphi_{\pp, n}$ in $\pi_{\pp}'$ for our construction. Since we wish to use the result of Theorem 
\ref{formula+1} to derive a precise interpolation formula here, we shall have to ensure that our local vectors are test vectors in the sense 
of \cite{FMP} (cf. \cite{GP}). According to \cite[Theorem 1.7, $\S 1.2.3$]{FMP}, these test vectors can be characterized as follows.

\begin{theorem}[File-Martin-Pitale]\label{1.7} 

Given a prime ideal $v \subset \mathcal{O}_F$, let $\PP_v \subset \mathcal{O}_{F_v}$ be the maximal ideal, and fix a uniformizer $\varpi_v$. 
Let $\pi_v$ be an irreducible admissible representation of $\GL(F_v)$ of conductor $\PP_v^{c(\pi_v)}$ and trivial central character. 
Let $\Omega_v$ be a character of $K_v^{\times}$ for which the restriction $\Omega_v \vert_{F_v^{\times}}$ to $F_v^{\times}$ is trivial. 
Assume that $c(\Omega_v) \geq c(\pi_v) > 0$. Viewing $K_v^{\times}$ as a torus of $\GL(F_v)$ via the embedding defined in 
\cite[$\S 2.3$]{FMP}, we have that $\dim_{\CC}( \Hom_{K_v^{\times}}(\pi_v, \Omega_v)) = 1$. Moreover, writing
\begin{align*} \KK_1(\PP_v^n) &=\left\lbrace \left(\begin{array} {cc} a&  b \\ c & d \end{array}\right) \in 
\GL(\mathcal{O}_{F_v}) : c \in \PP_v^n, d \in 1 + \PP_v^n \right\rbrace, \end{align*} and 
\begin{align*} h &= \left(\begin{array} {cc} \varpi_v^{c(\Omega_v) - c(\pi_v)}&  0 \\ 
0 & 1 \end{array}\right) \left(\begin{array} {cc} 0 & 1 \\ -1 & 0 \end{array}\right) = \left(\begin{array} {cc} 0 & \varpi_v^{c(\Omega_v) - c(\pi_v)} \\ 
-1 & 0 \end{array}\right) , \end{align*} the following is true: Given $l_v \in \Hom_{K_v^{\times}}(\pi_v, \Omega_v)$ a nonzero functional, the subgroup 
$ h \KK_1(\PP_v^{c(\pi_v)}) h^{-1} \subset  \GL(\mathcal{O}_{F_v})$ fixes a $1$-dimensional subspace of $\pi_v$ consisting of test vectors for $l_v$. \end{theorem}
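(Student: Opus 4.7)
My plan is to split the proof into the multiplicity-one assertion and the explicit production of test vectors fixed by the conjugated congruence subgroup.

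For the multiplicity statement $\dim_{\CC}\Hom_{K_v^{\times}}(\pi_v,\Omega_v) = 1$, I would invoke the local Tunnell--Saito dichotomy for the pair $(\pi_v,\Omega_v)$ applied to the two quaternion algebras over $F_v$: exactly one of the split Hom space $\Hom_{K_v^{\times}}(\pi_v,\Omega_v)$ and the non-split Hom space $\Hom_{K_v^{\times}}(\pi_v',\Omega_v)$ is non-zero, and whichever is non-zero is one-dimensional. The hypothesis $c(\Omega_v)\geq c(\pi_v)>0$ together with the compatibility $\Omega_v|_{F_v^{\times}} = \omega_v$ pins down the epsilon-factor side so that it is the split algebra $\M(F_v)$ that carries the non-vanishing functional. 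This can be verified directly by computing the local root number $\epsilon(1/2,\pi_v\times\Omega_v,\psi_v)$ via Deligne's formula for $\epsilon$-factors of pairs; the sign is controlled by the excess $c(\Omega_v)-c(\pi_v)$ and the behaviour of $K_v/F_v$.

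For the test-vector part I would pass to the Kirillov (or Whittaker) model of $\pi_v$, in which a generator of $\Hom_{K_v^{\times}}(\pi_v,\Omega_v)$ can be written down explicitly as a toric integral against $\Omega_v$ over $K_v^{\times}/F_v^{\times}$. The matrix $h$ in the statement is engineered precisely to shift the standard new-vector line — which by definition is the one-dimensional space of $\KK_1(\PP_v^{c(\pi_v)})$-fixed vectors — onto a line adapted to the image of the embedded torus from \cite[$\S 2.3$]{FMP}. The diagonal piece $\operatorname{diag}(\varpi_v^{c(\Omega_v)-c(\pi_v)},1)$ absorbs the conductor mismatch between $\Omega_v$ and $\pi_v$, while the Weyl element swaps the two rows of the congruence condition so that the conjugated subgroup meets the embedded torus in a controlled way. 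That the resulting fixed space is one-dimensional is automatic: $h\KK_1(\PP_v^{c(\pi_v)})h^{-1}$ is $\GL(F_v)$-conjugate to $\KK_1(\PP_v^{c(\pi_v)})$, so it fixes a line in $\pi_v$ by the very definition of $c(\pi_v)$ as conductor.

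The real content is then non-vanishing of $l_v$ on this candidate line, and I expect this to be the main obstacle. One would unfold the toric integral against an explicit expression for the conjugated new vector in the Kirillov model and reduce to a Gauss-sum-type evaluation whose non-degeneracy depends on matching the additive character's conductor against $c(\Omega_v)$. This has to be carried out case-by-case: principal series, Steinberg, and supercuspidal $\pi_v$, with further sub-cases according to whether $K_v/F_v$ is split, inert, or ramified. The supercuspidal case is the most delicate, since there the Kirillov-model expression for new vectors is only implicit, and one needs a Bushnell--Kutzko type-theoretic description of the new-vector line to make the integral tractable. Modulo these calculations, combining the one-dimensionality with non-vanishing gives the claim.
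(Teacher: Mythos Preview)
The paper does not give its own proof of this statement: Theorem \ref{1.7} is quoted verbatim from File--Martin--Pitale \cite[Theorem 1.7]{FMP} and used as a black box, so there is nothing in the paper to compare your argument against. Your sketch is a plausible outline of how the result is actually established in \cite{FMP} --- Tunnell--Saito for the multiplicity-one part, explicit Whittaker/Kirillov-model computations for the non-vanishing of the toric functional on the conjugated newvector line, done case-by-case over the type of $\pi_v$ and the splitting behaviour of $K_v/F_v$ --- but in the context of the present paper no proof is expected or given; the correct ``proof'' here is simply a citation.
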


Taken with the construction of Gross-Prasad \cite{GP} (for $c(\Omega_v) \geq c(\pi_{v}) = 0$), this has the following interpretation (see \cite[$\S 1.2.3$]{FMP}). 
Let $R_v$ be an order of $\M(F_v)$. Let $d(R_v)$ be the exponent of its reduced 
discriminant. Let $c(R_v)$ denote the exponent of its conductor, i.e. the smallest integer $c \geq 0$ for which $\mathcal{O}_{F_v} + \varpi_v^c \mathcal{O}_{K_v} 
\subseteq R_v.$ The local order $R_v$ can fix a test vector only if $c(R_v) \geq c(\Omega_v)$. 
Intuitively, it seems reasonable to expect that if such a local order 
$R_v$ satisfies the conditions $c(R_v) = c(\Omega_v)$ and $d(R_v) = c(\pi_v)$, then it might select a local test vector in $\pi_v$, i.e. so that the subgroup $R_v^{\times}$ 
of $\GL(F_v)$ fixes a $1$-dimensional space of test vectors for any nonzero linear functional $l_v \in \Hom_{K_v^{\times}}(\pi_v, \Omega_v)$. 
If it is the case
that either $c(\pi_v)$ or $c(\Omega_v)$ equals $0$, then it is shown in \cite{GP} that such an order $R_v$ exists, and that is unique up to conjugation by $K_v^{\times}$. 
Moreover, this order is maximal if $c(\pi_v) = 0$, and Eichler if $c(\pi_v) >0$. If it is the case that $c(\Omega_v) \geq c(\pi_v) >0$, then the invariants $d(R_v)$ and $c(R_v)$
no longer specify such an order $R_v$ uniquely. However, the result of Theorem \ref{1.7} can be used to give the following interpretation of the problem, as explained in 
File-Martin-Pitale \cite[$\S 1.2.3$]{FMP}. Namely, in the setting where $c(\Omega_v) \geq c(\pi_v) >0$, there exists an Eichler order $R_v$ of $\M(F_v)$ with 
$c(R_v) = c(\Omega_v)$ and $d(R_v) = c(\pi_v)$ for which the subgroup $R_v^{\times}$ of $\GL(F_v)$ fixes a line or test vectors in $\pi_v$. Moreover, this order 
$R_v$ can be expressed uniquely at the intersection or two maximal orders $R_{1, v}$ and $R_{2, v}$ for which $c(R_{1, v}) = c(\Omega_v)$ and 
$c(R_{2, v}) = c(\Omega_v) - c(\pi_v)$. We shall use these results of \cite{GP} and \cite{FMP} below, specifically in the setting where $v = \pp$, $n = c(\Omega_{\pp}) \geq 0$, 
and the exponent $\delta = c(\pi_{\pp})$ equals $0$ or $1$ respectively. 

\subsubsection{Algebraicity} 

We can deduce the following algebraicity result independently of the theorem Shimura \cite{Sh2}.
Let $\pi^{\sigma}$ denote the representation of $\GL(\AF)$ defined by the rule that sends the eigenvalues of $\pi$ to their images 
under $\sigma \in \Aut({\bf{C}})$. Let $\Omega^{\sigma}$ denote the character defined on nonzero ideals $\mathfrak{a} \subset \mathcal{O}_K$ 
by the rule $\mathfrak{a} \mapsto \Omega(\mathfrak{a})^{\sigma}$.

\begin{corollary}\label{alg+1} 

Let $L = {\QQ}(\pi, \Omega)$ be the finite extension of ${\bf{Q}}$ obtained by adjoining the eigenvalues of $\pi$ and the values of $\Omega$. 
If $B$ is totally definite, then the values 
\begin{align*} \LL(1/2, \pi \times \Omega) & =  \frac{1}{2} \left( \frac{\Delta_F}{c(\Omega) \Delta_K} \right)^{\frac{1}{2}} L_{S(\Omega)}(1, \eta) 
L_{S(\pi) \cup S(\Omega)}(1, {\bf{1}}_F) L^{S(\pi)}(2, {\bf{1}}_F)  \\ &\times \prod_{v \in S(\pi) \cap S(\Omega)^c} e(K_v/F_v) 
\times \prod_{v \mid \infty } C_v(K, \pi, \Omega) \times \frac{L^{S_2(\pi)}(1/2, \pi \times \Omega)}{L^{S_2(\pi)}(1, \pi, \ad)} \end{align*} 
are algebraic, and moreover lie in the number field $L \subset \overline{\QQ}$. Moreover, there is a natural action 
of $\sigma \in \Aut({\CC})$ on these values given by the rule $ \LL(1/2, \pi \times \Omega)^{\sigma} = \LL(1/2, \pi^{\gamma} \times \Omega^{\sigma}), $ where 
$\gamma$ denotes the restriction of $\sigma$ to the Hecke field ${\bf{Q}}(\pi)$ of $\pi$. \end{corollary}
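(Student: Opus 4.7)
The approach is to combine Theorem \ref{formula+1}, which expresses $\LL(1/2, \pi \times \Omega)$ as the ratio $|P_\Omega^B(\varphi)|^2/(\varphi,\varphi)$ for the File--Martin--Pitale test vector $\varphi$, with the elementary observation that in the totally definite setting this ratio admits a manifestly $L$-rational description as a finite sum.

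First I would exploit the definiteness: since $B$ is totally definite and $\pi_\infty'$ has parallel weight $2$ with trivial central character, the space of automorphic forms on $B^\times(\AF)$ fixed by a suitable compact open subgroup $U \subset \widehat{B}^\times$ is finite-dimensional and identifies with $\CC$-valued functions on the finite double coset set $B^\times(F) \backslash B^\times(\AF)/ U F_\infty^\times$. The Hecke algebra acts with eigenvalues in $\QQ(\pi)$, and I would use this to equip the $\pi'$-isotypic component with a natural $\QQ(\pi)$-rational structure. Combined with the one-dimensionality of the local $\Hom_{K_v^\times}(\pi_v', \Omega_v)$ spaces (Theorem \ref{1.7} and \cite{GP}), this lets me rescale the FMP test vector $\varphi$ to take values in $\mathcal{O}_L$, with $L = \QQ(\pi, \Omega)$. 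Then, fixing a compact open $V \subset \widehat{\mathcal{O}}_K^\times$ that stabilizes $\varphi$ and $\Omega$, the period $P_\Omega^B(\varphi)$ collapses into a finite sum
\begin{align*}
P_\Omega^B(\varphi) &= \vol(V K_\infty^\times/F_\infty^\times) \sum_i \varphi(t_i) \Omega(t_i),
\end{align*}
over representatives of $\AK^\times/K^\times \AF^\times V K_\infty^\times$, which manifestly lies in $L$. A parallel argument gives $(\varphi,\varphi) \in \QQ(\pi)$, and similarly $\overline{P_\Omega^B(\varphi)} = P_{\Omega^{-1}}^B(\overline\varphi) \in L$. The equality in Theorem \ref{formula+1} then forces $\LL(1/2, \pi \times \Omega) \in L$, in particular showing that the a priori transcendental factor $(\Delta_F/c(\Omega)\Delta_K)^{1/2}$ must combine algebraically with the remaining partial $L$-values and archimedean constants $C_v(K, \pi, \Omega)$ to produce an element of $L$.

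For the Galois equivariance, I would apply $\sigma \in \Aut(\CC)$ termwise to the finite sums above. The transported function $\varphi^\gamma$, defined by applying $\gamma = \sigma\vert_{\QQ(\pi)}$ to values, is a test vector in $(\pi^\gamma)'$: the action of $\widehat{B}^\times$ on the double coset set is $\sigma$-independent, and Hecke eigenvalues transform by $\gamma$. Since $\Omega(t_i)^\sigma = \Omega^\sigma(t_i)$ pointwise, one gets $P_\Omega^B(\varphi)^\sigma = P_{\Omega^\sigma}^B(\varphi^\gamma)$, and similarly for the inner product, yielding $\LL(1/2, \pi \times \Omega)^\sigma = \LL(1/2, \pi^\gamma \times \Omega^\sigma)$.

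The main obstacle is the $\Aut(\CC)$-equivariant normalization of the File--Martin--Pitale test vector: it is specified a priori only up to a $\CC^\times$-scalar, and one must pin this scalar down Galois-equivariantly. I expect this to follow by choosing a rational normalization on the finite-dimensional ambient space of quaternionic automorphic forms and invoking the uniqueness statements of Theorem \ref{1.7} and \cite{GP} at each place to transfer rationality to the test vector itself.
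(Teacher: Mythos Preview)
Your proposal is correct and follows essentially the same approach as the paper: invoke Theorem \ref{formula+1} to rewrite $\LL(1/2,\pi\times\Omega)$ as $\vert P_\Omega^B(\varphi)\vert^2/(\varphi,\varphi)$, then exploit total definiteness to realize $\varphi$ as a function on the finite set $B^\times\backslash\widehat{B}^\times/H$ with values in $\ZZ(\pi)$, so that the period is a finite sum. Your treatment is in fact more detailed than the paper's, particularly on the Galois-equivariance and the normalization issue you flag at the end; the paper's proof simply asserts that the result is ``simple to deduce'' from the finiteness of the integral.
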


\begin{proof} 

By Theorem \ref{formula+1}, we have that \begin{align}\label{ev0} \LL(1/2, \pi \times \Omega) & = \vert P_{\Omega}^B(\varphi) \vert^2/(\varphi, \varphi). \end{align} 
Now, we can view $\varphi$ as an automorphic function on the set $C(B; H) = B^{\times} \backslash \widehat{B}^{\times} /H$, for $H$ some compact open subgroup 
of $\widehat{B}^{\times}$. This function is determined uniquely up to multiplication by a nonzero complex scalar by the Jacquet-Langlands correspondence. 
Moreover, since $B$ is totally definite, the space $C(B; H)$ is finite. Using this fact, it is easy to see that we can fix a basis for the space of automorphic forms 
on $C(B; H)$ taking values in ${\ZZ}(\pi)$. The result is then simple to deduce from $(\ref{ev0})$, using the fact that $P_{\Omega}^B(\varphi)$ is a finite integral. \end{proof} 

\subsection{Central derivative values for the case of $k=1$}

Let us now suppose that $k=1$. Hence, $\epsilon(1/2, \pi \times \Omega) = -1$ for all but finitely many ring class characters $\Omega$ factoring through $X$,
and the corresponding central values $L(1/2, \pi \times \Omega)$ are forced to vanish by the functional equation. In this setting, we have the following formula 
of Yuan-Zhang-Zhang \cite{YZZ} (generalizing Gross-Zagier \cite{GZ}) for the derivative values $L'(1/2, \pi \times \Omega)$. 
We now describe this in more detail, following \cite[$\S 1.2,1.3, 3.2$]{YZZ}.

Fix a real place $\tau$ of $F$. Let $\BB$ denote the quaternion algebra defined over $F$ which is split at $\tau$, 
but ramified at each other real place, as well as the finite places dividing the inert level $\NN^{-}$. 
Again, we can and do fix an embedding of $K$ into $\BB$.

\subsubsection{Shimura curves}

Let $\widehat{\BB}^{\times}$ denote the finite adelic points of $\BB^{\times}(\AF)$. 
Given $H \subset \widehat{\BB}^{\times}$ any compact open subgroup, let $M_H$ 
denote the associated (compactified) Shimura curve over $F$, 
whose complex points determine a Riemann surface \begin{align*} 
M_{H, \tau}(\CC) &= \BB^{\times} \backslash \mathfrak{H}^{\pm} \times
\widehat{\BB}^{\times} / H \times \lbrace \operatorname{cusps} \rbrace. \end{align*} 
Note that the set of cusps  $\lbrace \operatorname{cusps} \rbrace $ is nonempty only if $F = \QQ$. 

Given compact open subgroups $H_1, H_2 \subset \widehat{\BB}^{\times}$ for which the inclusion
$H_1 \subset H_2$ holds, there is a natural and surjective morphism \begin{align*}
\pi_{H_1, H_2}: M_{H_1} &\longrightarrow M_{H_2}. \end{align*} We shall write 
$M = \lbrace M_H \rbrace_H$ to denote the associated projective system. Note 
that each $M_H$ can be identified with the quotient of $M$ by the action of $H$.

\subsubsection{Hodge classes}

A {\it{Hodge class}} on a Shimura curve $M_H$ is a line bundle $L_H$
on $\Pic(M_H)_{\QQ}$ whose global sections are holomorphic forms for 
parallel weight $2$. We refer to \cite[$\S 1.2$]{YZZ} for a more explicit 
description of these classes. Given a connected component $\beta \in 
\pi_0(M_{H, \overline{F}})$, we write $L_{H, \beta} = L_H \vert_{M_{H, \beta}}$ 
to denote the restriction of $L_H$ to the connected component $M_{H, \beta}$ 
of $M_{H}$ corresponding to $\beta$. We then view this $L_{H, \beta}$ as a 
divisor on $M_H$ via the pushforward under $M_{H, \beta} \longrightarrow M_H$.

\begin{definition} Given a Hodge class $L_H$ and a connected component
$\beta \in \pi_0(M_{H, \beta})$, the {\it{normalized Hodge class on $M_{H, \beta}$}} 
is the weighted class $\xi_{H, \beta} = L_{H, \beta} / \deg(L_{H, \beta})$. The 
{\it{normalized Hodge class on $M_{H}$}} is the sum $\xi_H = \sum_{\beta} \xi_{H, \beta}$. \end{definition}

\subsubsection{Abelian varieties parametrized by Shimura curves}

Let $M = \lbrace M_H \rbrace_H$ be a Shimura curve defined over $F$, as above.
A simple abelian variety $A$ defined over $F$ is said to be {\it{parametrized by $M$}} 
if for some compact open subgroup $H \subset \widehat{\BB}^{\times}$, there exists a 
non constant morphism $M_H \longrightarrow A$ defined over $F$. It is known 
by Eichler-Shimura theory that if such an abelian variety $A$ parametrized by 
$M$ is of {\it{strict $\GL$-type}} in the sense that (i) $R = \End_F(A) \otimes_{\ZZ} {\QQ}$
is a field and (ii) $\operatorname{Lie}(A)$ is a free module of rank $1$ over 
$R \otimes_{\QQ} F$ by the induced action. Given such an abelian variety $A$, let us consider the space
\begin{align*} \pi_A &= \Hom_{\xi}^0(M, A) = \varinjlim_H \Hom_{{\xi}_U}^0(M_H, A).
\end{align*} Here, each $\Hom_{{\xi}_H}^0(M_H, A)$ denotes the morphisms in the space
$$\Hom^0(M_H, A) = \Hom_F(M_H A) \otimes_{\ZZ} \QQ$$ having basepoint equal to the Hodge 
class $\xi_H$. This $\pi_A$ has the natural structure of a $\widehat{\BB}^{\times}$-module. Moreover, as shown 
in \cite[Theorem 3.8]{YZZ}, this space $\pi_A$ in fact determines an ($A(\overline{F})_{\QQ} = A(\overline{F}) \otimes_{\ZZ} {\QQ}$-valued) 
automorphic representation of $\widehat{\BB}^{\times}$ over $\QQ$. The description of \cite[$\S 3.2.2$]{YZZ} also shows that there is a natural 
identification $R = \End_{\widehat{\BB}^{\times}}(\pi_A)$, as well as a decomposition $\pi_A =\otimes_v \pi_{A, v}$ into absolutely irreducible 
representations $\pi_{A, v}$ of $\BB_v^{\times}$ over $R$. Finally, since any morphism 
$M_H \longrightarrow A$ factors through the Jacobian $J_H$ of $M_H$, 
we can and do redefine this automorphic representation in the simpler form 
\begin{align*} \pi_A &= \Hom^0(J, A) = \varinjlim_H \Hom^0(J_H, A).
\end{align*} Here, in the same style as above, we have put 
$\Hom^0(J_H, A) = \Hom_F(J_H, A) \otimes_{\ZZ} {\QQ}$. 

\subsubsection{Dual abelian varieties}

Let us write $A^{\vee}$ to denote the dual of an abelian variety $A$. If $A$ is parametrized
by a Shimura curve $M =\lbrace M_U \rbrace_U$ in the sense defined above, then so too 
is $A^{\vee}$. Moreover, writing $R^{\vee} = \End_F(A^{\vee}) \otimes_{\ZZ} {\QQ}$, there is 
a canonical isomorphism $R \approx R^{\vee}$ obtained by sending a homomorphism 
$r: A \longrightarrow A$ to its dual $r^{\vee}: A^{\vee} \longrightarrow A^{\vee}$.

\subsubsection{Pairings}

Let us first consider the perfect, $\BB^{\times}(\AF)$-invariant pairing 
\begin{align*} (\cdot , \cdot ): \pi_A \times \pi_{A^{\vee}} &\longrightarrow R \end{align*} 
defined on elements $\varphi_{1, H} \in \Hom(J_H, A)$ and $\varphi_{2, H} \in \Hom(J_H, A)$ by 
\begin{align*} (\varphi_1, \varphi_2) &= \left( \varphi_{1, H} \circ \varphi_{2, H}^{\vee} \right)/\vol(M_H) .\end{align*}  
Here, $J_H$ denotes the Jacobian of the Shimura curve $M_H$, and $\varphi_{2, H}^{\vee}: A \longrightarrow J_H$ 
the dual of $\varphi_{2, H}$ composed with the canonical isomorphism $J_H^{\vee} \approx J_H$. 
This description implies that $\pi_{A^{\vee}}$ is in fact the dual of $\pi_A$ as a representation of $\BB^{\times}(\AF)$ over $R$.

Let us also consider the N\'eron-Tate height pairing (as defined e.g. in \cite[$\S 7$]{YZZ}), 
which recall is a ${\QQ}$-bilinear non-degenerate pairing \begin{align*} \langle \cdot, 
\cdot \rangle: A(\overline{F})_{\QQ} \times A^{\vee}(\overline{F})_{\QQ} \longrightarrow {\RR}. 
\end{align*} Here, we written $A(\overline{F})_{\QQ} = A(\overline{F}) \otimes_{\ZZ} {\QQ}$ 
and $A^{\vee}(\overline{F})_{\QQ} = A^{\vee}(\overline{F}) \otimes_{\ZZ} {\QQ}$ 
to denote the tensor products appearing throughout \cite{YZZ}. Of course, the 
N\'eron-Tate pairing is defined classically on the groups $A(\overline{F})$ and 
$A^{\vee}(\overline{F})$, and then extended in a natural way to these tensor products. 
This extended N\'eron-Tate pairing in fact descends to a ${\QQ}$-linear map 
\begin{align*} \langle \cdot, \cdot \rangle: A(\overline{F})_{\QQ} \otimes_R A^{\vee}(\overline{F})_{\QQ} 
 \longrightarrow {\RR}. \end{align*} Some more explanation of this fact is given in 
 \cite[$\S 1.2.3$]{YZZ} (cf. \cite[Proposition 7.3]{YZZ}). 
 
Now, given $a \in R$, $x \in A(\overline{F})_{\QQ}$, and $y \in A^{\vee}(\overline{F})_{\QQ}$, the rule 
$a \mapsto \langle ax, y \rangle$ defines an element of the space $\Hom(R, {\RR})$. As explained in
\cite[$\S 1.2.4$]{YZZ}, the trace map can be used to construct an isomorphism 
$\Hom(R, {\RR}) \approx R \otimes_{\QQ} {\RR}$, whence we write $\langle x, y \rangle_R$
to denote the corresponding element of the space $R \otimes_{\QQ} {\RR}$. This gives us 
the construction of an $R$-bilinear pairing \begin{align*} \langle \cdot, \cdot \rangle_R: 
A(\overline{F})_{\QQ} \otimes_R A^{\vee}(\overline{F})_{\QQ} &\longrightarrow
R \otimes_{\QQ} {\RR} \end{align*} for which \begin{align*} \langle \cdot, \cdot \rangle &= 
\Tr_{R \otimes {\RR}/ {\RR}} ~\langle \cdot, \cdot \rangle_R. \end{align*} We shall refer to this
 $R$-linear pairing as the {\it{$R$-linear N\'eron-Tate pairing}}. 
 
\subsubsection{CM points}

Recall that we fix an embedding $K \rightarrow \BB$. The adele group $\AK^{\times}$ acts on $\lbrace M_H \rbrace_H$ 
by right multiplication via the embedding $\AK^{\times} \rightarrow \BB^{\times}(\AF)$. 
Let $M^{K^{\times}}$ denote the subscheme of $M$ fixed by $K^{\times}$ under this action. 
By the theory of complex multiplication, each point of $M^{K^{\times}}(\overline{F})$ is defined over the maximal abelian extension $K^{\ab}$ of $K$. 
Let us now fix a point $Q \in M^{K^{\times}}(K^{\ab})$, and hence a point 
$Q_H \in M_H(K^{\ab})$ for each compact open subgroup $H \subset \widehat{\BB}^{\times}$. We shall normalize 
\begin{align*} M_{H, \tau}(\CC) &= \BB^{\times} \backslash \mathfrak{H}^{\pm} \times \widehat{\BB}^{\times}/H \times \lbrace \operatorname{cusps} \rbrace \end{align*} 
in such a way that $Q_H$ is represented by $[z_0, 1]$, where $z_0 \in \mathfrak{H}$ is the unique fixed point of $K^{\times}$ via the action induced by the embedding 
$K \rightarrow \BB$. Note that for any vector $\varphi \in \pi_A$, we obtain a well-defined point $\varphi(Q) \in A(K^{\ab})$.

\subsubsection{Automorphic periods}

Fix a ring class character $\Omega: \AK /K^{\times} \longrightarrow {\CC}^{\times}$ taking values in some subfield $L$ of 
$\overline{\QQ}$ containing the field $R = \End_F(A) \otimes_{\ZZ} {\QQ} $. 
Recall that given a simple abelian variety $A/F$ parametrized by a Shimura curve 
$M = \lbrace M_H \rbrace_H$ over $F$, we have some associated automorphic representation $\pi_A = \otimes_v \pi_{A, v}$ of $\widehat{\BB}^{\times}$. 
We shall assume (as throughout) that the central character of $\pi_A$ is trivial.
We then define for any choice of vector $\varphi \in \pi_A$ the period integral 
\begin{align}\label{AP-1} P_{\Omega}^{\BB}(\varphi) &= \int_{\AK^{\times} / K^{\times}}
\varphi(Q^t) \otimes_R \Omega(t) dt  \in A(K^{\ab}) \otimes_R L.\end{align} 
Here, $dt$ is the Haar measure on $\AK^{\times}/K^{\times}$ having total volume $1$. 
An analogous definition is of course be made for the dual representation $\pi_{A^{\vee}}$. It is also the case that we have the inclusion 
\begin{align*}P_{\Omega}^{\BB}(\varphi) \in A(\Omega) := \left( A(K^{\ab}) \otimes_R L_{\Omega} \right)^{G_K^{\ab}}, \end{align*} 
where $L_{\Omega}$ denotes the $R$-vector space $L$ with action of $G_K^{\ab}$ given by the character $\Omega$, 
and that the correspondence $\varphi \mapsto P_{\Omega}^{\BB}(\varphi)$ defines an element of 
\begin{align*} \Hom_{\AK^{\times}}(\pi_A \otimes \Omega, L) \otimes_L A(\Omega) \end{align*} 
We refer again to the discussion in \cite[$\S 1.3$]{YZZ} for more details.

\subsubsection{Main formula}

The theorem of Tunnel \cite{Tu} and Saito \cite{Sa} (cf.~\cite[Theorem 1.3]{YZZ}) implies that the space $\Hom_{\AK^{\times}}(\pi_A \otimes \Omega, L)$ 
is at most $1$-dimensional, with dimension $1$ if and only if the set $\Ram(\BB)$ of places of $F$ where $\BB$ ramifies is given by
\begin{align*} \Ram(\BB) &= \lbrace v \in V_F: \epsilon(1/2, \pi_{A, v} \times \Omega_v) \neq \Omega_v(-1)\eta_v(-1)\rbrace .\end{align*} 
Recall that the global root number $\epsilon(1/2, \pi_A \times \Omega)$ in this setting must be equal to $-1$, 
in which case the corresponding central value $L(1/2, \pi_A \times \Omega)$ must vanish.

Let us now commit an abuse of notation in writing $\pi = \otimes_v \pi_v$ to denote the automorphic representation $\pi_A = \Hom^0(J, A)$ of 
$\widehat{\BB}^{\times}$ introduced above. Let us also assume that the space $\Hom_{\AK^{\times}}(\pi_A \otimes \Omega, L)$ has dimension $1$. 
The main result of \cite{YZZ} relates the value $L'(1/2, \pi \times \Omega)$ to a certain generator of this space $\Hom_{\AK^{\times}}(\pi \otimes \Omega, L)$. 
To be more precise, let 
\begin{align*} \mathcal{P}(\pi, \Omega) &= \Hom_{\AK^{\times}}(\pi \otimes \Omega, L). \end{align*} 
Let \begin{align*} \mathcal{P}(\pi_v, \Omega_v) &= \Hom_{K_v^{\times}}(\pi_v \otimes 
\Omega_v, L) \end{align*} for each place $v$ of $F$, whence there is a decompositions 
$\mathcal{P}(\pi, \Omega) = \otimes_v \mathcal{P}(\pi_v, \Omega_v)$, 
as well as an analogous decomposition $\mathcal{P}(\pi^{\vee}, \Omega^{-1}) 
= \otimes_v \mathcal{P}(\pi_v^{\vee}, \Omega_v^{-1})$ for the associated contragredient representations. 
The main idea of \cite{YZZ} is to find an explicit generator 
\begin{align*} \alpha 
&= \otimes_v \alpha_v \in \mathcal{P}(\pi, \Omega) \otimes \mathcal{P}(\pi^{\vee}, \Omega^{-1})
= \otimes_v \mathcal{P}(\pi_v, \Omega_v) \otimes \mathcal{P}(\pi_v^{\vee}, \Omega_v^{-1}), \end{align*} 
and then to use various calculations with the theta correspondence in the style of Waldspurger \cite{Wa} 
(with their geometric description of the Jacquet-Langlands lift) to relate the
this generator to the central derivative values $L'(1/2,  \pi \times \Omega)$.
The generator $\alpha = \otimes \alpha_v$ they obtain
is defined formally as follows. For each place $v$ of $F$, fix a Haar 
measure $dt_v$ on $K_v^{\times}/F_v^{\times}$ such that (i) the product 
measure over all places $v$ gives the Tamagawa measure on 
$\AK^{\times}/K^{\times}$ and (ii) the maximal compact subgroup
$\mathcal{O}_{K_v}^{\times}/\mathcal{O}_{F_v}^{\times}$ has a volume in
$\QQ$ for each nonarchimedean place $v$. Then, for vectors $\varphi_{1,v} 
\in \pi_v$ and $\varphi_{2,v} \in \pi_v^{\vee}$, the local component $\alpha_v$
is defined formally by \begin{align*} \alpha(\varphi_{1,v}, \varphi_{2, v}) &:= 
\frac{ L(1, \eta_v) L(1, \pi_v, \ad)}{\zeta_{F_v}(2)L(1/2, \pi_v \times \Omega_v)}
\int_{K_v^{\times}/F_v^{\times}} (\pi_v(t) \varphi_{1, v}, \varphi_{2, v})_v 
\Omega_v(t) dt. \end{align*} Fixing an embedding $\iota: L \rightarrow {\CC}$,
this local integral can be seen to take values in $L$ (independently of the choice of 
embedding $\iota$), as explained in \cite[$\S 1.3$]{YZZ}. The local invariant pairings 
$(\cdot, \cdot)_v: \pi_v \times \pi_v^{\vee} \longrightarrow R$ also satisfy the compatibility 
relation $(\cdot, \cdot) = \otimes_v (\cdot, \cdot)_v$, where $(\cdot, \cdot): \pi_A \times \pi_A^{\vee} 
\longrightarrow R$ denotes the perfect $\BB^{\times}(\AF)$-invariant pairing introduced above. 
The following formula, which is the main result of the \cite{YZZ}, relates this element 
$\alpha = \otimes \alpha_v$ to the central derivative values $L'(1/2, \pi_A \times \Omega)$.

\begin{theorem}\label{YZZ} 

Given decomposable vectors $\varphi_1 \in \pi_A$ and $\varphi_2 \in \pi_A^{\vee}$ in the setup above, we have
the following identity in $L \otimes_{\QQ} \CC$ : \begin{align}\label{YZZMF} \langle P_{\Omega}^{\BB}(\varphi_1), 
P^{\BB}_{\Omega^{-1}}(\varphi_2) \rangle_L &= \frac{\zeta(2)L'(1/2, \pi_A \times \Omega)}{4 L(1, \eta)^2 L(1, \pi_A, \ad)}
\cdot \alpha(\varphi_1, \varphi_2). \end{align} \end{theorem}

\begin{proof}See Yuan-Zhang-Zhang, \cite[Theorem 1.2]{YZZ}. \end{proof} 

\section{Distribution relations on quaternion algebras}

We now recall some general distribution relations on split quaternion algebras, following the appendix of \cite{CV}. We shall use these results in our constructions below.

\subsection{Local quadratic orders}

Let $K/F$ be any quadratic extension of number fields. Writing $\mathcal{O}_F$ and $\mathcal{O}_K$ to denote the rings of integers of $F$ and 
$K$ respectively, we consider the localizations $\OFP$ and $\OKP$ of these rings at a fixed prime $\pp$ of $\mathcal{O}_F$.

\begin{lemma}\label{6.1} 
Let $Z$ be any $\OFP$-order in $K_{\pp}$. Then, there exists a unique integer $n = l_{\pp}(Z) \geq 0$ for which $Z = \OFP + \pp^n \OKP$, whence we write $Z = Z_n$. 
\end{lemma}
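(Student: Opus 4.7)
The plan is to exploit the fact that $\OFP$ is a complete DVR and that $\OKP$ is the unique maximal $\OFP$-order of $K_{\pp}$, so that $Z$ sits inside $\OKP$ as an $\OFP$-submodule of finite index. The strategy is to reduce the classification to the structure theorem for modules over the DVR $\OFP$ applied to the rank one quotient $\OKP/\OFP$.

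First I would check that $Z \subseteq \OKP$. Since $Z$ is an $\OFP$-order in $K_{\pp}$, every element of $Z$ is integral over $\OFP$, and so lies in the integral closure $\OKP$ of $\OFP$ in $K_{\pp}$. (This holds in both the field case and the split case $K_{\pp} \cong F_{\pp} \times F_{\pp}$, the latter because $\OKP = \OFP \times \OFP$ is still the integral closure.) Next I would observe that $Z$ contains $\OFP$ by definition of an order, and that the quotient $\OKP/\OFP$ is free of rank one over $\OFP$: choose any element $\omega \in \OKP$ whose reduction generates $\OKP/\OFP$, e.g.\ a uniformizer of $\OKP$ in the ramified/inert cases, or $(1,0)$ in the split case.

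Then the image $\overline{Z}$ of $Z$ under the projection $\OKP \twoheadrightarrow \OKP/\OFP \cong \OFP$ is an $\OFP$-submodule of $\OFP$, hence of the form $\pp^n \cdot \OFP$ for a unique integer $n \geq 0$ by the structure theorem for ideals of the DVR $\OFP$. Pulling back, I get $Z = \OFP + \pp^n \omega \OFP = \OFP + \pp^n \OKP$, which is the desired description; this also shows $Z_n$ is indeed an $\OFP$-order, so existence is handled.

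For uniqueness, if $\OFP + \pp^n \OKP = \OFP + \pp^m \OKP$ for integers $m, n \geq 0$, then projecting to $\OKP/\OFP \cong \OFP$ gives $\pp^n \OFP = \pp^m \OFP$ inside $\OFP$, which forces $m = n$. I do not anticipate any serious obstacle: the only mild subtlety is to handle the split case $K_{\pp} \cong F_{\pp}\times F_{\pp}$ on the same footing as the field case, but once one fixes the right generator $\omega$ of $\OKP$ over $\OFP$ (so that $\OKP = \OFP[\omega]$ and $\OKP/\OFP$ is free of rank one), the argument is uniform.
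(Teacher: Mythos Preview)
Your argument is correct. The paper itself does not give a proof here, merely remarking that the result is standard and referring to \cite[\S 6.1]{CV}; your write-up supplies precisely the kind of elementary argument one expects, via the structure theorem for submodules of the rank-one $\OFP$-module $\OKP/\OFP$, and you treat the split case $K_{\pp}\cong F_{\pp}\times F_{\pp}$ uniformly with the field case. One small point worth making explicit: the image $\overline{Z}$ in $\OKP/\OFP$ is nonzero because $Z$, being an order, is an $\OFP$-lattice of full rank two in $K_{\pp}$; this is what ensures $\overline{Z}=\pp^n\OFP$ for some finite $n\geq 0$ rather than $\overline{Z}=0$.
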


\begin{proof} The result is standard. See e.g. the proof given in \cite[$\S$ 6.1]{CV}. \end{proof}

\subsection{Distribution relations on split quaternion algebras}

Let us now consider any quaternion algebra $B$ defined over $F$ which (i) contains the quadratic extension $K$ and (ii) splits at the fixed prime $\pp$. 
Let $H$ be any compact open subgroup of $\widehat{B}^{\times}$ of the form $H = H^{\pp} R_{\pp}^{\times}$, where $R_{\pp}$ is an Eichler order of the 
local quaternion algebra $B_{\pp}$ of level $\pp^{\delta}$ for some integer $\delta \geq 0$ . There is a left action of $K_{\pp}^{\times}$ on the quotient 
$B_{\pp}^{\times}/R_{\pp}^{\times}$ given by the rule 
\begin{align*} \gamma \star x &= [\gamma b], ~~~~~\gamma \in K_{\pp}^{\times}, ~ x = [b] \in B_{\pp}^{\times}/R_{\pp}^{\times}.\end{align*}
Given a class $x = [b]$ in $B_{\pp}^{\times}/R_{\pp}^{\times}$, the stabilizer of this action is given by $Z(x)^{\times}$, where $Z(x)$ 
is the $\OFP$-order of $K_{\pp}$ determined by the intersection $Z(x) = K_{\pp} \cap b R_{\pp} b^{-1}$. Using Lemma \ref{6.1} 
above, we assign to this class $x = [b]$ the uniquely determined integer $l_{\pp}(x) = l_{\pp}(Z(x)) \geq 0$, i.e., so that 
$\Stab_{K_{\pp}^{\times}}(x) = Z_{l_{\pp}(x)}^{\times}.$ The main results of \cite[Appendix $\S$ 6]{CV} use \cite[Lemma 6.1]{CV} to relate the trace operator 
\begin{align*} \Tr(x) &= \sum_{\gamma \in Z_{l_{\pp}(x)-1}^{\times}/ Z_{l_{\pp}(x)}^{\times}} \gamma \star  x = \sum_{\gamma \in Z_{l_{\pp}(x)-1}^{\times}/ 
Z_{l_{\pp}(x)}^{\times}} [\gamma b] \end{align*} to the (local) Hecke operator \begin{align*}T_{\pp}(x) &= [R_{\pp}^{\times} \xi_{\pp} R_{\pp}^{\times}](x). \end{align*} 
To describe these relevant results in more detail, we shall divide into cases on the exponent $\delta \geq 0$ in the level of the local Eichler order $R_{\pp}$.

\subsubsection{The case of $\delta = 0$} 

Suppose that $R_{\pp}$ has level $\pp^{\delta} = 1$, whence $R_{\pp}$ is a maximal order in $B_{\pp}$. 
Let $V$ be a simple left $B_{\pp}$-module for which $V \approx F_{\pp}^2$ as an $F_{\pp}$-vector space. 
The embedding $K_{\pp} \longrightarrow B_{\pp}$ endows $V$ with the structure of a free rank one (left) 
$K_{\pp}$-module. Let $\mathcal{L} = \mathcal{L}(V)$ denote the set of  $\OFP$-lattices in $V$. Fix 
a base lattice $L_0 \in \mathcal{L}$ such that $\lbrace 
\alpha \in B_{\pp}: \alpha L_0 \subset L_0 \rbrace = R_{\pp}$.
Thus we may fix a bijection \begin{align}\label{d0}
B_{\pp}^{\times}/R_{\pp}^{\times} &\longrightarrow 
\mathcal{L}, ~~~ b \longmapsto b L_0. \end{align}
The induced left action of $K_{\pp}^{\times}$ on 
$\mathcal{L}$ is given by the rule 
\begin{align*} \gamma \star L &= \gamma L, 
~~~~~ \gamma \in K_{\pp}^{\times}, ~ L = bL_0
\in \mathcal{L}. \end{align*} The induced 
function $l_{\pp}$ on $\mathcal{L}$ is given
by the rule that sends a lattice $L$ to 
the unique integer $l_{\pp}(L) \geq 0$ 
for which $\lbrace \gamma \in 
K_{\pp}^{\times}: \gamma L \subset 
L \rbrace$ equals $Z_{l_{\pp}(L)}$.

\begin{lemma}\label{lattice} The induced function $l_{\pp}$
on $\mathcal{L}$ defines a bijection  \begin{align*} 
K_{\pp}^{\times} \backslash B_{\pp}^{\times}/ 
R_{\pp}^{\times} \approx K_{\pp}^{\times} 
\backslash  \mathcal{L} &\longrightarrow 
{\bf{Z}}_{\geq 0}. \end{align*} \end{lemma}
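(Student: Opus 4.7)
The plan is to show that $l_{\pp}$ descends to a well-defined map on $K_{\pp}^{\times}$-orbits, to produce an explicit preimage $L_n$ realising each integer $n \geq 0$, and then to show that every lattice with invariant $n$ lies in the $K_{\pp}^{\times}$-orbit of $L_n$.

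First I would check that $l_{\pp}$ is constant on $K_{\pp}^{\times}$-orbits. If $L' = \gamma L$ for some $\gamma \in K_{\pp}^{\times}$, then $\lbrace \eta \in K_{\pp}^{\times} : \eta L' \subset L' \rbrace = \lbrace \eta \in K_{\pp}^{\times} : \gamma^{-1} \eta \gamma L \subset L \rbrace$, and since $K_{\pp}$ is commutative, this coincides with $\lbrace \eta \in K_{\pp}^{\times} : \eta L \subset L \rbrace$. Hence the order in Lemma~\ref{6.1} attached to $L$ is unchanged and $l_{\pp}(L') = l_{\pp}(L)$.

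Next I would establish surjectivity by exhibiting a representative for each integer $n \geq 0$. Fix a $K_{\pp}$-basis $e_0$ of $V$ and set $L_n := Z_n \cdot e_0$. Since $Z_n = \OFP + \pp^n \OKP$ is itself a full $\OFP$-lattice of rank two inside $K_{\pp}$, the translate $L_n$ is a full $\OFP$-lattice in $V$, and directly from the definition $\lbrace \gamma \in K_{\pp} : \gamma L_n \subset L_n \rbrace = Z_n$, so that $l_{\pp}(L_n) = n$.

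The heart of the argument is injectivity: given $L \in \mathcal{L}$ with $l_{\pp}(L) = n$, we must show $L \in K_{\pp}^{\times} \cdot L_n$. Identifying $V$ with $K_{\pp}$ via $e_0$, the lattice $L$ becomes an $\OFP$-submodule of $K_{\pp}$ which is stable under $Z_n$ and whose full stabilizer order in $K_{\pp}$ is precisely $Z_n$; in other words, $L$ is a proper fractional $Z_n$-ideal in $K_{\pp}$. The plan is to invoke the local class number one statement for quadratic $\OFP$-orders: because $Z_n$ is a one-dimensional Noetherian local ring and $L$ has endomorphism ring exactly $Z_n$, the module $L$ is an invertible $Z_n$-module, and invertible modules over a local ring are free of rank one. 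Consequently $L = \alpha \, Z_n \cdot e_0 = \alpha L_n$ for some $\alpha \in K_{\pp}^{\times}$, which puts $L$ in the orbit of $L_n$.

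The main obstacle is the last principality claim. For $n=0$ it is trivial since $Z_0 = \OKP$ is a DVR. For $n \geq 1$ the order $Z_n$ is no longer Dedekind, so one has to argue that properness (i.e.\ equality of coefficient ring with $Z_n$) forces invertibility, and then invoke that $\mathrm{Pic}(Z_n)$ vanishes for a local order. This can be done by a direct calculation with the two generators of $Z_n$ over $\OFP$, or equivalently by appealing to the standard classification of proper fractional ideals of a local quadratic order, which is precisely the content needed to match the Cornut--Vatsal appendix in \cite{CV}.
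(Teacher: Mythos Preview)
Your argument is correct and is essentially the standard proof; the paper itself gives no argument here but simply refers to \cite[Lemma 6.2]{CV}, where the same classification of $\OFP$-lattices in $K_{\pp}$ by their multiplier ring is carried out. Your identification of the crux --- that a proper fractional $Z_n$-ideal is invertible (the Gorenstein property of quadratic orders) and hence principal over the local ring $Z_n$ --- is exactly the point on which the result turns, so nothing is missing.
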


\begin{proof} See \cite[Lemma 6.2]{CV}. \end{proof}

Let ${\bf{Z}}[\mathcal{L}]$ denote the free abelian group 
generated by $\mathcal{L}$.

\begin{definition} Let $L \in \mathcal{L}$ be a lattice.

\begin{itemize}

\item[(i)] The {\it{lower neighbours}} of $L$ are the 
sublattices $L' \subset L$ for which we have 
$L/L' \approx \OFP/ \pp \OFP$. The {\it{upper neighbours}} 
of $L$ are the superlattices $L' \supset L$ for which we 
have $L'/L\approx \OFP/ \pp \OFP $. 

\item[(ii)] The {\it{lower Hecke operator}} $T_{\pp}^l$
on ${\bf{Z}}[\mathcal{L}]$ is the operator that sends
a lattice $L$ to the sum of its lower neighbours. 
The {\it{upper Hecke operator}} $T_{\pp}^u$
on ${\bf{Z}}[\mathcal{L}]$ is the operator that sends
a lattice $L$ to the sum of its upper neighbours. 

\item[(iii)] Given a lattice $L \in \mathcal{L}$ with
$l_{\pp}(L) \geq 1$, the {\it{lower predecessor}}
of $L$ is defined by the lattice $\pr_l(L) = \pp 
Z_{l_{\pp}(L)-1}L$; the {\it{upper predecessor}}
of $L$ is defined by the lattice $\pr_u(L) = Z_{l_{\pp}(L)-1}L$.

\end{itemize} \end{definition}

\begin{remark}
As explained in \cite[Remark 6.3]{CV},
the lower Hecke operator $T_{\pp}^l$ 
corresponds under the fixed bijection 
$(\ref{d0})$ to the double coset operator 
$[R_{\pp}^{\times} \alpha_{\pp} R_{\pp}^{\times}]$, 
where we write $\alpha_{\pp} \in R_{\pp} \approx \M(\mathcal{O}_{F_{\pp}})$ 
to denote some element of $\nrd(\alpha_{\pp}) 
= \varpi_{\pp}$. Similarly, the upper Hecke operator 
$T_{\pp}^u$ corresponds to the double coset 
operator $[R_{\pp}^{\times} \alpha_{\pp}^{-1} 
R_{\pp}^{\times}]$. Note also that \cite[$\S$ 6]{CV}
treats only the lower Hecke operators $T_{\pp}^l$
for simplicity, the analogous discussion for the upper Hecke operators 
$T_{\pp}^{u}$ being simple to deduce. \end{remark} Let us for simplicity of notation define

\begin{align*} \eta_{\pp} &=
\begin{cases} -1 &\text{if $\pp \mathcal{O}_K = \PP$ is inert}\\
0 &\text{if $\pp \mathcal{O}_K = \PP^2$ is ramified} \\ 
1 &\text{if $\pp \mathcal{O}_K = \PP {\PP}^*$ is split. } \end{cases} 
\end{align*}

\begin{lemma}\label{6.5} Let $L$ be a lattice in $\mathcal{L}$.

\begin{itemize}

\item[(i)] If $l_{\pp}(L) =0$, then there are precisely $1+ \eta_{\pp}$
lower neighbours $L' \subset L$ for which $l_{\pp}(L') = 0$. 
Explicitly, these lower neighbours are given by \begin{align*}
L' &=  \begin{cases} \emptyset &\text{if $\pp \mathcal{O}_K = \PP$ is inert}\\
\PP L &\text{if $\pp \mathcal{O}_K = \PP^2$ is ramified} \\ 
\PP L, {\PP}^*L &\text{if $\pp \mathcal{O}_K = \PP {\PP}^*$ is split. } 
\end{cases} \end{align*}

\item[(ii)] If $l_{\pp}(L) >0$, then there exists a unique lower neighbour
$L' \subset L$ for which $l_{\pp}(L') \leq l_{\pp}(L)$. Explicitly, this lower
neighbour $L' \subset L$ is given by the lower predecessor $L' = \pr_l(L) = 
\pp Z_{l_{\pp}(x)-1}L$, which satisfies $l_{\pp}(L') = l_{\pp}(L) -1$.

\item[(iii)] In either case, the remaining lower neighbours $L' \subset L$
satisfy the property that $l_{\pp}(L') = l_{\pp}(L) +1$ . These remaining
lower neighbours $L'$ are also permuted faithfully and transitively by 
the action of $Z_{l_{\pp}(L)}^{\times}/Z_{l_{\pp}(L) +1}^{\times}$, and 
moreover have $L$ as their common upper predecessor.

\end{itemize} \end{lemma}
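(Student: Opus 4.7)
My plan is to deduce the lemma from Lemma~\ref{6.1L} by translating its fixed-point/transitive-action content from $\mathbf{P}(Z_n/\pp Z_n)$, with $n := l_{\pp}(L)$, to the set of lower neighbours of $L$, which is naturally parametrized by $\mathbf{P}(L/\pp L)$. The bridge is that $V$ is a one-dimensional $K_{\pp}$-vector space and $Z_n$ is by construction the full $K_{\pp}$-multiplier ring of $L$, so $L$ is free of rank one as a $Z_n$-module; picking a $Z_n$-generator yields a $Z_n^{\times}$-equivariant isomorphism $L/\pp L \cong Z_n/\pp Z_n$, hence a $Z_n^{\times}$-equivariant bijection between the lower neighbours of $L$ and $\mathbf{P}(Z_n/\pp Z_n)$.

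The central observation is then that a lower neighbour $L' \subset L$ satisfies $l_{\pp}(L') \leq n$ if and only if its class $[L'] \in \mathbf{P}(L/\pp L)$ is fixed by $Z_n^{\times}$. Indeed, any lower neighbour satisfies $\pp L \subset L'$, so fixing $[L']$ is equivalent to $\gamma L' \subset L'$ for every $\gamma \in Z_n^{\times}$, which by Lemma~\ref{6.1} translates to $Z_n \subset Z_{l_{\pp}(L')}$. With this equivalence in hand, parts (i) and (ii) follow by feeding the explicit enumeration of fixed points from Lemma~\ref{6.1L} through the bijection: when $n = 0$ the fixed lines $\PP/\pp Z_0$ and, in the split case, $\PP^{*}/\pp Z_0$ pull back to the sublattices $\PP L$ and $\PP^{*} L$, yielding the count $1 + \eta_{\pp}$; when $n > 0$ the unique fixed line $\pp Z_{n-1}/\pp Z_n$ pulls back to $\pp Z_{n-1} L = \pr_l(L)$, whose $K_{\pp}^{\times}$-stabilizer is readily checked (using that $Z_{n-1}L$ is $Z_{n-1}$-free of rank one) to be $Z_{n-1}^{\times}$, giving $l_{\pp}(\pr_l(L)) = n - 1$.

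For part (iii), let $L'$ be a non-fixed lower neighbour, so by the preceding equivalence $l_{\pp}(L') > n$. Lemma~\ref{6.1L} says that $Z_n^{\times}/Z_{n+1}^{\times}$ acts faithfully and transitively on the non-fixed points; since this quotient is abelian, every non-fixed point has trivial stabilizer in the quotient, so its stabilizer in $Z_n^{\times}$ is exactly $Z_{n+1}^{\times}$. Rerunning the line-fixing criterion with $n+1$ in place of $n$ then gives $Z_{l_{\pp}(L')} = Z_{n+1}$, so $l_{\pp}(L') = n + 1$, and the faithful, transitive permutation action of $Z_n^{\times}/Z_{n+1}^{\times}$ on these neighbours is inherited directly. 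To see that $L$ is their common upper predecessor, note that in the chain $L' \subset Z_n L' \subset L$ the quotient $L/L'$ has length one over $\OFP$, so $Z_n L'$ is either $L'$ or $L$; the equality $Z_n L' = L'$ would force $Z_n \subset Z_{l_{\pp}(L')} = Z_{n+1}$, which is absurd, so $Z_n L' = L$.

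The only delicate point I anticipate is checking that the identification $L/\pp L \cong Z_n/\pp Z_n$ coming from a $Z_n$-generator of $L$ really is $Z_n^{\times}$-equivariant rather than merely $\OFP^{\times}$-equivariant, since the entire proof rests on transporting the $Z_n^{\times}$-action intact from one side to the other; once that is set up, every assertion of the lemma is an immediate translation of the fixed-point/orbit data provided by Lemma~\ref{6.1L}.
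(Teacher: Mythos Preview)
Your proposal is correct and follows exactly the route the paper indicates: the paper's own proof consists only of the sentence ``See \cite[Lemma 6.5]{CV}, the result is deduced from Lemma~\ref{6.1L},'' and you have supplied precisely that deduction. The point you flag as delicate---$Z_n^{\times}$-equivariance of the identification $L/\pp L \cong Z_n/\pp Z_n$---is in fact immediate, since the isomorphism $z \mapsto zf$ for a $Z_n$-generator $f$ of $L$ intertwines multiplication by $Z_n^{\times}$ on both sides; the freeness of $L$ over $Z_n$ that underlies this is guaranteed by Lemma~\ref{lattice}, which shows every lattice with $l_{\pp}=n$ lies in the $K_{\pp}^{\times}$-orbit of $Z_n e$.
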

 
 \begin{proof} See \cite[Lemma 6.5]{CV}, the 
 result is deduced from \cite[Lemma 6.1]{CV}. \end{proof}

One can deduce from this the following result.

\begin{corollary}\label{6.6} Let $x$ be a class in 
$B_{\pp}^{\times} / R_{\pp}^{\times}$ with $l_{\pp}(x) \geq 1$. 
Then, we have \begin{align*} \Tr(x) &= T_{\pp}^l(x') - x'', \end{align*}
where $x' = \pr_u(x)$, and 

\begin{align*}
x'' &= \begin{cases} 0 &\text{if $l_{\pp}(x) =1$ and $\pp \mathcal{O}_K = \PP$ is inert}\\
\varpi_{\PP} x' &\text{if $l_{\pp}(x) =1$ and 
$\pp \mathcal{O}_K = \PP^2$ is ramified} \\ 
(\varpi_{\PP}+ \varpi_{{\PP}^*})x'
&\text{if $l_{\pp}(x) =1$ and $\pp \mathcal{O}_K = \PP {\PP}^*$ is split} \\
\pr_l(x') &\text{if $l_{\pp}(x) \geq 2$}. \end{cases} \end{align*}
Here, for a prime $\PP$ of $\mathcal{O}_K$, $\varpi_{\PP}$
denotes a uniformizer of $\PP$ (which corresponds 
under the reciprocity map $\rec_K$ to the geometric 
Frobenius at $\PP$). \end{corollary}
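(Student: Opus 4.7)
The plan is to transport the entire statement to the lattice side via the bijection $(\ref{d0})$ and then unpack Lemma \ref{6.5} applied to the upper predecessor $x'=\pr_u(x)$. Writing $L$ for the lattice corresponding to $x$, we have $x'$ corresponding to $Z_{l_{\pp}(x)-1}L$, which has level $l_{\pp}(x')=l_{\pp}(x)-1$, and by Remark after Lemma \ref{lattice} the operator $T_{\pp}^l$ sends $x'$ to the formal sum of all of its lower neighbours in $\mathcal{L}$.

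First I would isolate, among the lower neighbours of $x'$, those of level $l_{\pp}(x')+1 = l_{\pp}(x)$. By Lemma \ref{6.5}(iii) these are permuted faithfully and transitively by the quotient $Z_{l_{\pp}(x')}^{\times}/Z_{l_{\pp}(x')+1}^{\times} = Z_{l_{\pp}(x)-1}^{\times}/Z_{l_{\pp}(x)}^{\times}$, and all have $x'$ as common upper predecessor. Since $x$ itself has upper predecessor $x'$ and level $l_{\pp}(x')+1$, it belongs to this orbit; hence the orbit is precisely $\{\gamma\star x : \gamma\in Z_{l_{\pp}(x)-1}^{\times}/Z_{l_{\pp}(x)}^{\times}\}$, and its sum is exactly $\Tr(x)$ by the definition of the trace operator.

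Next I would identify the remaining lower neighbours of $x'$, which by Lemma \ref{6.5}(i)--(ii) are characterized by $l_{\pp}(L')\leq l_{\pp}(x')$. Splitting on whether $l_{\pp}(x')=0$ (i.e.\ $l_{\pp}(x)=1$) or $l_{\pp}(x')\geq 1$ (i.e.\ $l_{\pp}(x)\geq 2$), Lemma \ref{6.5}(i) yields in the first case either no extra neighbour, the single neighbour $\PP x'=\varpi_{\PP}x'$, or the pair $\PP x',\,\PP^{\ast}x'=\varpi_{\PP}x'+\varpi_{\PP^{\ast}}x'$, according as $\pp$ is inert, ramified or split; here I should just translate the lattice-level description $\PP L$ into the class notation using that the reciprocity map sends $\varpi_{\PP}$ to the action of a uniformizer at $\PP$. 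In the second case, Lemma \ref{6.5}(ii) supplies the single extra neighbour $\pr_l(x')$. In each case this exhausted set of extra neighbours is exactly the term $x''$ in the statement.

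Putting these pieces together, the decomposition of $T_{\pp}^l(x')$ into the level-$l_{\pp}(x)$ part and the lower-level part reads
\begin{equation*}
T_{\pp}^l(x') \;=\; \Tr(x) \;+\; x'',
\end{equation*}
which rearranges to the required identity. The main obstacle I anticipate is purely bookkeeping: making sure the correspondence between classes $b\mapsto bL_0$ and lattices is compatible with the left $K_{\pp}^{\times}$-action (so that the $Z_{l_{\pp}(x)-1}^{\times}/Z_{l_{\pp}(x)}^{\times}$-orbit on lattices matches $\Tr$ on classes) and that the lattice operations $\PP L,\,\PP^{\ast}L,\,\pp Z_{n-1}L$ translate into the scalar multipliers $\varpi_{\PP},\varpi_{\PP^{\ast}}$ and into $\pr_l(x')$ on the class side; the rest is a direct application of Lemma \ref{6.5}.
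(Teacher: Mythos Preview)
Your proposal is correct and is precisely the deduction the paper has in mind: the paper does not write out a proof at all, but simply presents the corollary as an immediate consequence of Lemma~\ref{6.5}, and your argument is exactly the unpacking of that lemma applied to the lower neighbours of $x'=\pr_u(x)$. The only content beyond Lemma~\ref{6.5} is the observation that $x$ itself lies in the $Z_{l_{\pp}(x)-1}^{\times}/Z_{l_{\pp}(x)}^{\times}$-orbit of level-$l_{\pp}(x)$ lower neighbours of $x'$, which you state and justify correctly.
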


\begin{remark}
The discussion and results above do not depend upon
the choice of base lattice $L_0$ in $\mathcal{L}$; see
\cite[$\S$ 6.2]{CV} for more explanation.
\end{remark}

\subsubsection{The case of $\delta = 1$}

Suppose now that $R_{\pp}$ is Eichler of level $\pp^{\delta} = \pp$. Keep $V$ as defined above. 
Let $\mathcal{L}_1 = \mathcal{L}_1(V)$ denote the set $1$-lattices of $\mathcal{L}$, i.e. the
set of pairs of lattices $L = (L(0), L(1))$ with $L(0), L(1) \in \mathcal{L}$ such that $L(1) \subset L(0)$ is a sublattice for which $L(0)/L(1) \approx \OFP/ \pp \OFP$. 
The group $B_{\pp}^{\times} \approx \operatorname{GL}(V)$ acts transitively on $\mathcal{L}_1$, and so
we can fix a $1$-lattice $L_0 = (L_0(0), L_0(1))$ whose stabilizer under this action is given by $R_{\pp}^{\times}$. Fixing such a $1$-lattice, 
we may also fix a bijection 
\begin{align}\label{d1} B_{\pp}^{\times}/R_{\pp}^{\times} &\longrightarrow \mathcal{L}_1, ~~~ b \longmapsto b L_0. \end{align}
We can associate to any $L = (L(0)), L(1)) \in \mathcal{L}_1$ a pair of integers 
$l_{\pp, 0} = l_{\pp}(L(0))$ and $l_{\pp, 1} = l_{\pp}(L(1))$, whence we define $l_{\pp}(L)$ to be the maximum, 
\begin{align*} l_{\pp}(L) &= \max ( l_{\pp, 0}(L), l_{\pp, 1}(L)) = \max( l_{\pp}(L(0)), l_{\pp}(L(1))). \end{align*}
This definition leads to the following possible orientations for a given $1$-lattice $L$.

\begin{definition} Let $L = (L(0), L(1))$ be a $1$-lattice in the set $\mathcal{L}_1$.

\begin{itemize}

\item[(i)] We say that $L$ is of {\it{type I}} 
if $l_{\pp, 0} < l_{\pp, 1}$ (whence $l_{\pp, 1} 
=  l_{\pp, 0}+1$), in which case the {\it{leading 
vertex}} of $L$ is defined to be $L(1)$. If
$l_{\pp}(L) \geq 2$, then we also define
the {\it{predecessor}} of $L$ to be the 
$1$-lattice in $\mathcal{L}_1$ defined by 
\begin{align*} \pr(L) &= (L(0), \pr_l(L(0))) 
= (L(0), \pp Z_{l_{\pp}(L)-2}L(0) ). \end{align*}
 
\item[(ii)] We say that $L$ is of {\it{type II}} 
if $l_{\pp, 0} > l_{\pp, 1}$ (whence $l_{\pp, 0} 
=  l_{\pp, 1}+1$), in which case the {\it{leading 
vertex}} of $L$ is defined to be $L(0)$. If
$l_{\pp}(L) \geq 2$, then we also define
the {\it{predecessor}} of $L$ to be the 
$1$-lattice in $\mathcal{L}_1$ defined by 
\begin{align*} \pr(L) &= (\pr_u(L(1)), L(1)) 
= (Z_{l_{\pp}(L)-2}L(1), L(1)). \end{align*} 

\item[(iii)] We say that $L$ is of {\it{type III}} 
if $l_{\pp, 0} = l_{\pp, 1}$ (whence 
$l_{\pp}(L) = 0$, $\eta_{\pp} \in \lbrace 0, 1 
\rbrace$, and $L(1) = \PP L(0)$ or $L(1) 
= {\PP}^*L(0)$). In this case, as a convention,
we define the {\it{leading vertex}} of $L$ 
to be $L(0)$. \end{itemize} \end{definition}

\begin{remark}
The type of a $1$-lattice $L = (L(0), L(1))$ in $\mathcal{L}_1$ is invariant under the action of $\gamma \in K_{\pp}^{\times}$. 
More precisely, $L$ and $\gamma L = \gamma \star L$ have the same type, and moreover $\pr(\gamma L) = \gamma \pr (L) = \gamma \star \pr(L)$ if $l_{\pp}(L) \geq 2$.
\end{remark}

Let ${\bf{Z}}[\mathcal{L}_1]$ denote the free abelian group generated by $\mathcal{L}_1$. 

\begin{definition} (i) The {\it{lower Hecke operator}} $T_{\pp}^l$ on ${\bf{Z}}[\mathcal{L}_1]$ is the rule that sends a $1$-lattice $L = (L(0), L(1))$ to the sum of
all $1$-lattices $L' = (L'(0), L'(1))$ for which $L'(0) = L(0)$ but $L'(1) \neq L(1)$. (ii) The {\it{upper Hecke operator}} $T_{\pp}^u$ on ${\bf{Z}}[\mathcal{L}_1]$ is
the rule that sends a $1$-lattice $L = (L(0), L(1))$ to the sum of all $1$-lattices $L' = (L'(0), L'(1))$ for which $L'(1) = L(1)$ but $L'(0) \neq L(0)$. \end{definition}

\begin{remark}\label{6.10} These Hecke 
operators correspond to the following double 
coset operators, as explained in 
\cite[Remark 6.10]{CV}. Suppose 
that we write the Eichler order 
$R_{\pp}$ as the intersection 
$R_{\pp} = R(0) \cap R(1)$, 
where for $i \in \lbrace 0, 1 \rbrace$, 
we define \begin{align*} R(i) &= \lbrace 
b \in B_{\pp} : b L_0(i) 
\subset L_0(i) \rbrace. \end{align*} 
Then, the lower Hecke operator 
$T_{\pp}^l$ corresponds to the 
double coset operator $[R_{\pp}^{\times} 
\alpha R_{\pp}^{\times}]$ for any element 
$\alpha \in R(0)^{\times} - R(1)^{\times}$, 
and the upper Hecke operator $T_{\pp}^u$ 
corresponds to the double coset operator 
$[R_{\pp}^{\times} \beta R_{\pp}^{\times}]$ 
for any element $\beta \in R(1)^{\times} - R(0)^{\times}$. 
Also, we have the decompositions 
$R(0)^{\times} = R_{\pp}^{\times} \coprod 
R_{\pp}^{\times} \beta R_{\pp}^{\times}$ 
and $R(1)^{\times} = R_{\pp}^{\times} 
\coprod R_{\pp}^{\times} \alpha R_{\pp}^{\times}$
\end{remark}

\begin{definition} The {\it{$\pp$-new quotient}} 
${\bf{Z}}[\mathcal{L}_1]^{\pp-\new}$ of 
${\bf{Z}}[\mathcal{L}_1]$ is the quotient 
of ${\bf{Z}}[\mathcal{L}_1]$ by the 
${\bf{Z}}$-submodule spanned by
elements of the form \begin{align*}
\sum_{L' = (L'(0), L'(1)) \atop L'(0) = M} L', 
~~~~~ \sum_{L' = (L'(0), L'(1)) \atop L'(1) = M} L', 
\end{align*} where $M$ is some lattice in 
$\mathcal{L}$. Observe that by construction,
we have the congruences \begin{align*}
T_{\pp}^l \equiv T_{\pp}^{u} \equiv -1 
~~\text{ ~on~ ${\bf{Z}}[\mathcal{L}_1]^{\pp-\new}$.}
\end{align*} \end{definition} One can again use 
Lemma \ref{6.5} to relate the trace operator 
\begin{align*} \Tr(L) &= \sum_{ \gamma \in
Z_{l_{\pp}(L)-1}^{\times}/ Z_{l_{\pp}(L)}^{\times}}
\gamma \star L = \sum_{ \gamma \in
Z_{l_{\pp}(L)-1}^{\times}/ Z_{l_{\pp}(L)}^{\times}}
[\gamma b] L_0 \end{align*} in ${\bf{Z}}[\mathcal{L}_1]$
to the operators $T_{\pp}^l(L)$ and $T_{\pp}^{u}(L)$ to obtain the following result.
Here, we extend the notion of types to classes of $B_{\pp}^{\times} / R_{\pp}^{\times}$ in the natural way via $(\ref{d1})$.

\begin{lemma}\label{6.11}
Let $x$ be a class in $B_{\pp}^{\times} / R_{\pp}^{\times}$ 
with $l_{\pp}(x) \geq 2$. Then, we have 
\begin{align*} \Tr(x) &= \begin{cases}
T_{\pp}^l(\pr(x)) &\text{if $x$ is of type I} \\
T_{\pp}^{u}(\pr(x)) &\text{if $x$ is of type II. }
 \end{cases} \end{align*} Moreover, in the 
 $\pp$-new quotient of ${\bf{Z}} [B_{\pp}^{\times} 
 / R_{\pp}^{\times}]$ corresponding to 
 ${\bf{Z}}[\mathcal{L}_1]^{\pp-\new}$,

 \begin{align*} 
\Tr(x) &=- \pr(x). \end{align*} \end{lemma}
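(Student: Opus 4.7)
The plan is to pull the statement across the bijection $(\ref{d1})$ and reduce to the $\delta=0$ computations of Lemma \ref{6.5}. Write $x$ as a $1$-lattice $L = (L(0), L(1)) \in \mathcal{L}_1$ and set $l = l_{\pp}(x) = l_{\pp}(L) \geq 2$. Since $l \geq 2$, the $1$-lattice $L$ cannot be of type III (which forces $l_{\pp}(L) = 0$), so only types I and II need be considered. The key geometric observation in each case is that exactly one of the two vertices is stabilized pointwise by $Z_{l-1}^{\times}$, so that the trace sum can only move the other (leading) vertex; its orbit is then pinned down by Lemma \ref{6.5}(iii).

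First suppose $L$ is of type I, so that $l_{\pp}(L(1)) = l$ and $l_{\pp}(L(0)) = l-1$. By Lemma \ref{6.1} the $K_{\pp}^{\times}$-stabilizer of $L(0)$ is $Z_{l-1}^{\times}$, hence any $\gamma$ representing a class in $Z_{l-1}^{\times}/Z_l^{\times}$ satisfies $\gamma \star L = (L(0), \gamma L(1))$. Now $L(1) \subset L(0)$ is a lower neighbor whose invariant $l_{\pp}(L(1)) = l$ strictly exceeds $l_{\pp}(L(0)) = l-1$, so Lemma \ref{6.5}(iii) applied to $L(0)$ (which is applicable since $l_{\pp}(L(0)) \geq 1$) identifies the set of such lower neighbors with a free transitive $Z_{l-1}^{\times}/Z_l^{\times}$-set. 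Consequently the $q_{\pp}$ lattices $\gamma L(1)$ enumerate precisely the lower neighbors of $L(0)$ distinct from the lower predecessor $\pr_l(L(0))$. Since $\pr(L) = (L(0), \pr_l(L(0)))$ by definition, unwinding the definition of $T_{\pp}^l$ gives
\begin{align*}
\Tr(L) \;=\; \sum_{\gamma \in Z_{l-1}^{\times}/Z_l^{\times}} (L(0), \gamma L(1)) \;=\; T_{\pp}^l(\pr(L)).
\end{align*}
The type II case is symmetric: there $L(1)$ is the stabilized vertex, and the upper analogue of Lemma \ref{6.5}(iii) applied to $L(1)$ identifies the orbit $\{\gamma L(0)\}$ with the set of upper neighbors of $L(1)$ other than $\pr_u(L(1))$, yielding $\Tr(L) = T_{\pp}^u(\pr(L))$.

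For the $\pp$-new congruence, observe that in type I the sum
\begin{align*}
\pr(L) + T_{\pp}^l(\pr(L)) \;=\; \sum_{M} (L(0), M),
\end{align*}
with $M$ ranging over all lower neighbors of $L(0)$, is a defining relation of $\mathbf{Z}[\mathcal{L}_1]^{\pp\text{-}\new}$ and therefore vanishes in the quotient. Hence $T_{\pp}^l(\pr(L)) \equiv -\pr(L)$ and $\Tr(x) \equiv -\pr(x)$; the type II case follows identically from the analogous upper-Hecke relation. I do not foresee any substantive obstacle, as once the leading vertex and its orbit are correctly identified via Lemma \ref{6.5}(iii), the lemma follows by direct comparison of the definitions of $\Tr$, $T_{\pp}^l$, $T_{\pp}^u$, and $\pr$. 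The one bookkeeping point worth checking is that $[Z_{l-1}^{\times} : Z_l^{\times}] = q_{\pp}$ matches the count of non-predecessor neighbors, but this is immediate from the local structure of the quadratic order $Z_{l-1}$.
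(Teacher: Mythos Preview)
Your argument is correct. The paper itself does not give a proof of this lemma; it simply refers the reader to \cite[Lemma 6.11]{CV}, so there is no in-text proof to compare against. Your approach---fixing the non-leading vertex, letting $Z_{l-1}^{\times}/Z_l^{\times}$ act on the leading vertex, and then invoking Lemma~\ref{6.5}(iii) to identify the orbit with the set of non-predecessor neighbors---is exactly the natural deduction from the $\delta=0$ machinery already set up in the paper, and is presumably the argument of \cite{CV} as well. The $\pp$-new congruence follows cleanly from your observation that $\pr(L) + T_{\pp}^{l}(\pr(L))$ (resp.\ $\pr(L) + T_{\pp}^{u}(\pr(L))$) is one of the defining relations of the quotient. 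The index check $[Z_{l-1}^{\times}:Z_l^{\times}] = q_{\pp}$ for $l \geq 2$ is indeed automatic from the faithful-transitive statement in Lemma~\ref{6.5}(iii), so no separate count is needed.
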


\begin{proof} See \cite[Lemma 6.11]{CV}. 
\end{proof}

\begin{remark} The constructions and results above do depend on the choice of base $1$-lattice $L_0 = (L_0(0), L_0(1))$. In particular, the definition
of type depends upon the orientation of the underlying Eichler order $R_{\pp} = R(0) \cap R(1)$. \end{remark}

\section{Construction of $p$-adic interpolation series}

We now construct $p$-adic interpolation series on the profinite group $X$ for the values $L^{(k)}(1/2, \pi \times \Omega)$, i.e. where $\Omega$ is a character factoring through $X$. 
We divide into cases $k$ on the generic root number $\epsilon = \epsilon(1/2, \pi \times \Omega) \in \lbrace \pm 1 \rbrace$. Here, we shall also write $\star$ to denote 
the natural action of $\widehat{K}^{\times}$ by left multiplication on the set $K^{\times} \backslash \widehat{B}^{\times} / \widehat{R}^{\times}$ induced by the choice of embedding 
$K$ into $B$ or $\BB$ (depending on whether $k=0$ or $k=1$ respectively).

\begin{remark} Our constructions below works more generally whenever the vector in known $\varphi \in \pi'$ to take values in a discrete valuation ring and  
each prime dividing the level structure is known to split in $K$. \end{remark}

\subsection{The case of $k=0$} 

Let $\pi = \otimes_v \pi_v$ be a cuspidal automorphic representation of $\GL(\AF)$ having trivial central character. Fix a prime $\pp$ of $\mathcal{O}_F$ 
with underlying rational prime $p$, and an embedding $\overline{\QQ} \longrightarrow \overline{\QQ}_p$. We assume Hypotheses \ref{pi} and \ref{local+1}. 

\begin{lemma}\label{normalization}

Let $L$ be a finite extension of ${\QQ}_p$ containing the eigenvalues of $\pi$. Let $\mathcal{O} = \mathcal{O}_L$ denote its ring of integers. If the quaternion 
algebra $B$ is totally definite, then we can and do choose $\varphi \in \pi'$ in such a way that $\varphi$ takes values in the ring $\mathcal{O}$. \end{lemma}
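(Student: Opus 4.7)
The plan is to exploit the finiteness of the relevant double coset space in the totally definite setting together with the algebraicity of the Hecke eigenvalues of $\pi$. Since $B$ is totally definite and $\pi$ has parallel weight $2$ with trivial central character, the Jacquet--Langlands transfer $\pi'$ has trivial archimedean component $\pi'_\infty$. Fixing a compact open subgroup $H = H^{\pp} R_{\pp}^{\times} \subset \widehat{B}^{\times}$ as supplied by Hypothesis \ref{local+1}(ii) (refined outside $\pp$ so that at every finite place the local component is an Eichler order matching the test vector recipe of \cite{GP} and \cite[$\S 1.2.3$]{FMP}), any $H$-fixed vector $\varphi \in \pi'$ can be identified with a function on the finite set $C(B; H) = B^{\times} \backslash \widehat{B}^{\times} / H$.

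Next, I would pass to the finite dimensional space $V(H) = \{ f: C(B; H) \to \CC \}$, on which the Hecke algebra away from the bad primes acts. The $\pi'$-isotypic component $V(H)[\pi']$ is cut out by the eigenvalue system of $\pi$ under the Jacquet--Langlands correspondence; by construction the relevant eigenvalues lie in the Hecke field $\QQ(\pi) \subset L$. Hence $V(H)[\pi']$ is defined over $L$ in the sense that
\begin{equation*}
V(H)[\pi'] \;=\; V(H)[\pi']_L \otimes_L \CC, \qquad V(H)[\pi']_L \;=\; V(H)[\pi'] \cap \bigl\{ f: C(B; H) \to L \bigr\},
\end{equation*}
because the isotypic projector is a polynomial in the Hecke operators with coefficients in $\QQ(\pi)$.

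With the chosen $H$ the test vector results of \cite{GP} and \cite{FMP} give $\dim \pi'^{H} = 1$, so multiplicity one for definite quaternion algebras implies $V(H)[\pi']$ is one-dimensional. Pick any nonzero $\varphi_L \in V(H)[\pi']_L$; it is automatically decomposable (being up to scalar the unique local test vector at every place) and satisfies the local conditions of Hypothesis \ref{local+1}. Since $C(B; H)$ is finite, the values $\varphi_L(x)$ form a finite subset of $L$, and rescaling by an appropriate power of a uniformizer of $\mathcal{O}$ yields $\varphi = \lambda \cdot \varphi_L$ taking values in $\mathcal{O}$, as desired.

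The substantive point, and the only place one has to be careful, is the $L$-rationality of the isotypic component; this is where I would invoke the algebraicity of the Hecke eigenvalues of a cohomological cuspidal representation of parallel weight $2$ together with the Jacquet--Langlands correspondence, which matches Hecke eigenvalues on $\pi$ and $\pi'$ at every unramified place. Everything else is a normalization that works because $C(B; H)$ is finite, a feature that is genuinely special to the totally definite ($k=0$) case and that forces an analogous but more delicate argument in the indefinite $k=1$ setting later on.
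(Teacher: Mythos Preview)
Your proposal is correct and follows essentially the same approach as the paper: both arguments view $\varphi$ as a function on the finite set $C(B;H)=B^{\times}\backslash\widehat{B}^{\times}/H$, use algebraicity of the Hecke eigenvalues and multiplicity one to descend to a vector with values in $\QQ(\pi)$ (or $L$), and then rescale to land in $\mathcal{O}$. Your version is more careful about the rationality step (the isotypic projector/eigenspace argument) than the paper's, which simply asserts that after fixing a basis one can take $\varphi$ with values in $\ZZ(\pi)$.
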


\begin{proof} By Hypothesis \ref{pi}, the eigenvalues of $\pi = \JL(\pi')$ of $\GL(\AF)$ are algebraic. Hence, ${\QQ}(\pi)$ is a number field, and there exists a finite 
extension $L$ of ${\QQ}_p$ containing the eigenvalues of $\pi$. Any vector $\varphi \in \pi'$ has the same eigenvalues as $\pi$ by Jacquet-Langlands, and 
is determined uniquely up to multiplication by nonzero complex scalar by this condition. Moreover, we can view any $\varphi \in \pi'$ as an automorphic function on 
the finite set $C(B; H) = B^{\times} \backslash \widehat{B}^{\times} /H$, for $H \subset \widehat{B}^{\times}$ some compact open subgroup (with $H_{\pp} = R_{\pp}^{\times}$). 
Fixing a basis for the space of functions $C(B; H) \longrightarrow {\bf{C}}$, we then can choose $\varphi \in \pi'$ to take values in ${\bf{Q}}(\pi)$, or even ${\ZZ}(\pi)$. \end{proof}

We now give the main construction of $p$-adic $L$-functions for the $k=0$ setting. 

\subsubsection{The case of $\delta = 0$}

Let us first consider the case where $R_{\pp}$ of $B_{\pp}$ is Eichler of level $\pp^{\delta} = 1$, i.e. that $R_{\pp}$ is maximal. Recall that we let $V$ be a simple left 
$B_{\pp}$-module for which $V \approx F_{\pp}^2$ as an $F_{\pp}$-vector space. Recall as well that we write $\mathcal{L} = \mathcal{L}(V)$ 
to denote the set of $\OFP$-lattices in $V$, and that we fix a lattice $L_0 \in \mathcal{L}$ in such a way that $\lbrace \alpha \in B_{\pp}: 
\alpha L_0 \subset L_0 \rbrace = R_{\pp}$. Thus we may fix a bijection \begin{align} \label{b0} B_{\pp}^{\times}/R_{\pp}^{\times} &\longrightarrow \mathcal{L}, ~~~
x \longmapsto x L_0. \end{align} Using this bijection, we now fix the following sequence of local classes $x_n \in B_{\pp}^{\times}/R_{\pp}^{\times}$. 

\begin{definition}\label{sc0} Fix a $K_{\pp}$-basis $e$ of $V$. For each integer $n \geq 0$, let $L_n = L_n(e)$ denote the lattice in $\mathcal{L}$ defined by $L_n = Z_n e$.
We then define $x_n = x_n(e)$ to be the class in $B_{\pp}^{\times}/R_{\pp}^{\times}$ corresponding to $L_n$ under the fixed bijection $(\ref{b0})$. \end{definition}

\begin{lemma}\label{p0} The sequence $x = x(e)$ of classes $(x_n)_{n \geq 0}= (x_n(e))_{n \geq 0}$ in $B_{\pp}^{\times}/R_{\pp}^{\times}$ of Definition 
\ref{sc0} satisfies the following properties for each integer $n \geq 0$. \begin{itemize} 

\item[(i)] $l_{\pp}(x_n) = n$.

\item[(ii)] $x_n = \pr_u(x_{n+1})$.

\end{itemize} \end{lemma}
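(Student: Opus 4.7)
The plan is to verify both parts directly from the definitions, noting that the whole point of choosing $L_n = Z_n e$ is that the $\OFP$-order $Z_n$ literally equals its own order of integral multipliers in $K_\pp$, and that the chain $Z_{n+1} \subset Z_n$ is what makes the upper-predecessor map shift by one.

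For part (i), I would compute the set of $\gamma \in K_\pp$ for which $\gamma L_n \subseteq L_n$. Since $e$ is a $K_\pp$-basis of $V$, the condition $\gamma Z_n e \subseteq Z_n e$ is equivalent to $\gamma Z_n \subseteq Z_n$, and this last set is precisely $Z_n$ by definition of the order $Z_n = \OFP + \pp^n \OKP$. By Lemma \ref{6.1} this uniquely identifies the integer attached to $L_n$ as $n$, so via the bijection $(\ref{b0})$ we get $l_\pp(x_n) = l_\pp(L_n) = n$, as required.

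For part (ii), I would unpack the definition $\pr_u(L) = Z_{l_\pp(L)-1} L$ at $L = L_{n+1}$. Using part (i), $l_\pp(L_{n+1}) = n+1$, so
\begin{align*}
\pr_u(L_{n+1}) = Z_n \cdot L_{n+1} = Z_n \cdot Z_{n+1} e = Z_n e = L_n,
\end{align*}
where the penultimate equality uses the inclusion $Z_{n+1} \subseteq Z_n$ (so $Z_n \cdot Z_{n+1} = Z_n$ as $\OFP$-modules, since $Z_n$ is already an order containing $Z_{n+1}$). Translating back through $(\ref{b0})$ gives $x_n = \pr_u(x_{n+1})$.

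There really is no serious obstacle; the content is entirely in having set up the base lattice $L_0$ so that $\lbrace \alpha \in B_\pp : \alpha L_0 \subset L_0 \rbrace = R_\pp$, and in choosing $L_n := Z_n e$ rather than some twist. The only point worth being slightly careful about is that the action of $K_\pp^\times$ in the definition of $l_\pp$ refers to the full integral-multiplier order $Z(x)$ (so that $\Stab_{K_\pp^\times}(x) = Z(x)^\times$), which matches the calculation above.
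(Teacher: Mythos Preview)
Your proof is correct and follows essentially the same approach as the paper's own argument: both verify (i) directly from the definition of $l_{\pp}$ and (ii) by the chain $\pr_u(L_{n+1}) = Z_n L_{n+1} = Z_n Z_{n+1} e = Z_n e = L_n$. Your treatment of (i) is in fact slightly more explicit than the paper's, which simply asserts it as a direct consequence of the definition.
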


\begin{proof} Property (i) is a direct consequence of the definition of $l_{\pp}(x_n) = l_{\pp}(L_n)$. Property (ii) is then a direct consequence of the definition
of upper predecessor $\pr_u(x_n)$, i.e. as $\pr_u(x_{n+1}) = \pr_u(L_{n+1}) = Z_n L_{n+1} = Z_n Z_{n+1} e = Z_n e = L_n$. \end{proof}

\begin{remark}  Given such a sequence of local classes $(x_n)_{n \geq 0}$ in $B_{\pp}^{\times}/R_{\pp}^{\times}$, we shall also write each $x_n$ to
denote its corresponding adele in $\widehat{B}^{\times}/ \widehat{R}^{\times}$.\end{remark}

Let us now fix a sequence of classes $(x_n)_{n \geq 0}$ as in Definition \ref{sc0} above. Observe that Corollary \ref{6.6} 
above implies that for each integer $n \geq 0$, we have the relation 
\begin{align*} \Tr(x_{n+1}) &= T_{\pp}^l(x_n) - x_{n}^*\end{align*} in ${\bf{Z}} [B_{\pp}^{\times}/ R_{\pp}^{\times}]$, where for $\varpi_{\PP}$ 
a fixed uniformizer at $\PP$, \begin{align*} x_{n}^* &= \begin{cases} 0 
&\text{if $n =0$ and $\pp \mathcal{O}_K = \PP$ is inert}\\ \varpi_{\PP} x_n &\text{if $n =0$ and $\pp \mathcal{O}_K = \PP^2$ is ramified} \\ 
(\varpi_{\PP} + \varpi_{{\PP}^*} )x_n &\text{if $n =0$ and $\pp \mathcal{O}_K = \PP {\PP}^*$ is split} \\ \pr_l(x_n) &\text{if $n \geq 1$}. \end{cases} \end{align*} 

\begin{corollary}\label{6.5C} Assume that $\pi = \JL(\pi')$ has trivial central character, and let $\varphi \in \pi'$ be any decomposable 
vector whose component at $\pp$ is fixed by $R_{\pp}^{\times}$. Let $(x_n)_{n \geq 0}$ be the sequence of classes of Lemma \ref{sc0}. 
Then, for each integer $n \geq 1$, \begin{align*} \Tr \left( \varphi (x_{n+1}) \right) &:= \sum_{\gamma \in Z_n^{\times} /Z_{n+1}^{\times}} 
\varphi (\gamma \star x_{n+1}) = T_{\pp}^l \varphi(x_n) - \varphi(x_{n-1}). \end{align*} \end{corollary}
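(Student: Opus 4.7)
The plan is to deduce this as a direct consequence of Corollary \ref{6.6} applied to the class $x = x_{n+1}$, followed by linear extension of $\varphi$ and one central-character reduction.

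First I would observe that, by Lemma \ref{p0}(i), $l_{\pp}(x_{n+1}) = n+1 \geq 2$ whenever $n \geq 1$, so we land in the last branch of Corollary \ref{6.6}. This gives the identity
\begin{align*}
\Tr(x_{n+1}) &= T_{\pp}^l\bigl(\pr_u(x_{n+1})\bigr) - \pr_l\bigl(\pr_u(x_{n+1})\bigr)
\end{align*}
inside ${\bf{Z}}[B_{\pp}^{\times}/R_{\pp}^{\times}]$. Then, invoking Lemma \ref{p0}(ii) to rewrite $\pr_u(x_{n+1}) = x_n$, this simplifies to $\Tr(x_{n+1}) = T_{\pp}^l(x_n) - \pr_l(x_n).$

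Next I would extend $\varphi$ ${\bf{Z}}$-linearly to ${\bf{Z}}[B_{\pp}^{\times}/R_{\pp}^{\times}]$ and apply it to both sides. The left-hand side unwinds to the definition $\sum_{\gamma \in Z_n^{\times}/Z_{n+1}^{\times}} \varphi(\gamma \star x_{n+1})$, while the first term on the right becomes the standard Hecke-operator action $T_{\pp}^l \varphi$ evaluated at $x_n$, by the identification of $T_{\pp}^l$ with the double coset operator $[R_{\pp}^{\times} \alpha_{\pp} R_{\pp}^{\times}]$ explained in the remark following Lemma \ref{lattice}. It then remains to check that $\varphi(\pr_l(x_n)) = \varphi(x_{n-1})$. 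Unwinding the bijection $(\ref{b0})$, the class $\pr_l(x_n)$ corresponds to the lattice
\begin{align*}
\pp Z_{l_{\pp}(x_n) - 1} L_n \;=\; \varpi_{\pp} Z_{n-1} e \;=\; \varpi_{\pp} L_{n-1},
\end{align*}
i.e.\ to the class $\varpi_{\pp} \cdot x_{n-1}$ in $B_{\pp}^{\times}/R_{\pp}^{\times}$. Since $\varpi_{\pp} \in F_{\pp}^{\times}$ sits in the center of $B_{\pp}^{\times}$, and since $\pi$ (hence $\pi'$) has trivial central character by hypothesis, one has $\varphi(\varpi_{\pp} x_{n-1}) = \varphi(x_{n-1})$, which yields the stated identity.

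The only real content is the book-keeping: tracking the equivalence between classes in $B_{\pp}^{\times}/R_{\pp}^{\times}$, $\OFP$-lattices in $V$, and the associated quadratic orders $Z_n$, and verifying that the uniformizer appearing in the definition of $\pr_l$ is absorbed by triviality of the central character. There is no analytic input; the main obstacle is simply being careful that $n \geq 1$ is precisely what is needed to avoid the special ramified/split/inert cases of $x_n^*$ in Corollary \ref{6.6}.
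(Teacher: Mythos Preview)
Your proposal is correct and follows essentially the same route as the paper's proof: apply Corollary \ref{6.6} to $x_{n+1}$ (using $l_{\pp}(x_{n+1}) \geq 2$), extend $\varphi$ linearly, then kill the uniformizer in $\pr_l(x_n) = \varpi_{\pp} \cdot x_{n-1}$ via triviality of the central character and identify $\pr_u(x_n) = x_{n-1}$ by Lemma \ref{p0}(ii). The paper phrases the last step as $\pr_l(L_n) = \pp\, \pr_u(L_n)$ rather than unwinding directly to $\varpi_{\pp} L_{n-1}$, but this is the same computation.
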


\begin{proof} Extending by linearity, we obtain from Corollary \ref{6.5} the relation 
\begin{align*} \Tr \left( \varphi (x_{n+1}) \right) &:= \sum_{\gamma \in Z_n^{\times} /Z_{n+1}^{\times}} 
\varphi (\gamma \star x_{n+1}) = T_{\pp}^l \varphi(x_n) - \varphi(\pr_l(x_{n}))
\end{align*} for each integer $n \geq 1$. Here, we have viewed each 
local class $x_n \in B_{\pp}^{\times}/R_{\pp}^{\times}$ as its corresponding 
adele in $\widehat{B}^{\times}/\widehat{R}^{\times}$. We have also used the 
fact that the operators $\Tr$ and $T_{\pp}^l$ affect only the 
component at $\pp$. Now, recall that we write $L_n = x_n L_0$ to 
denote the lattice in $\mathcal{L}$ corresponding to $x_n$, whence
the lower predecessor $\pr_l(x_n)$ is by definition the class corresponding
under $(\ref{b0})$ to the lattice $\pr_l(L_n) = \pp Z_{n-1} L_n$. Observe that 
$\pr_l(L_n) = \pp \pr_u(L_n)$, whence we argue that $\varphi(\pr_l(x_n)) 
= \varphi(\pr_u(x_n))$ as a consequence of the fact that $\varphi$ has trivial 
central character. The desired relation thens follow from the property 
$\pr_u(x_n) = x_{n-1}$. \end{proof}

\begin{definition}\label{dist0} Assume that $\pi = \JL(\pi')$ is $\pp$-ordinary, 
with trivial central character. Let $\varphi \in \pi'$ be a decomposable vector
whose local component at $\pp$ is fixed by $R_{\pp}^{\times}$, which we can 
and do normalize to take values in the ring $\mathcal{O}$. Writing $a_{\pp} = 
a_{\pp}(\pi)$ to denote the $T_{\pp}^l$-eigenvalue of $\pi$, and $q_{\pp}$ the 
cardinality of the residue field at $\pp$, let $\alpha_{\pp} = \alpha_{\pp}(\pi)$ 
denote the $p$-adic unit root of the Hecke polynomial $t^2 - a_{\pp}t + q_{\pp}$.
We then define the system $\lbrace \vartheta_n \rbrace_{n \geq 1} = 
\lbrace \vartheta_n(x_n, \varphi) \rbrace_{n \geq 1}$ to
be the sequence of mappings $\vartheta_n: X_n \longrightarrow \mathcal{O}$
given by the rule \begin{align*} \vartheta_n:X_n \longrightarrow \mathcal{O}, ~~~
A \longmapsto \alpha_{\pp}^{1-n} \cdot \varphi(A \star x_n) - \alpha_{\pp}^{-n} 
\cdot \varphi(A \star x_{n-1} ). \end{align*} \end{definition}

Before we continue, let us explain briefly that the points $x_n$ here are well-defined.
Recall that we fix a maximal order $R \subset B$ and a function $\varphi$ on the finite set 
$C(\widehat{R}^{\times}) = B^{\times} \backslash \widehat{B}^{\times}/\widehat{R}^{\times}$.
Recall as well that we fix an embedding $K \rightarrow B$, a priori without any special conditions. Observe that this choice of embedding induces a natural 
map from the set $Y(\widehat{R}^{\times}) = K^{\times} \backslash \widehat{B}^{\times}/ \widehat{R}^{\times}$ to $C(\widehat{R}^{\times})$, and moreover that
we may view $\varphi$ as a function on $Y(\widehat{R}^{\times})$ after composition with this natural map $Y(\widehat{R}^{\times}) \rightarrow C(\widehat{R}^{\times})$.
This setup gives rise to a natural definition of {\it{conductor}} of a class or ``point" $x = [b]$ in the set $Y(\widehat{R}^{\times})$. 
%To be more precise, the idele class group
%$\widehat{K}^{\times}/K^{\times}$ acts by left multiplication on the set $Y(\widehat{R}^{\times})$. The stabilizer of a point $x = K^{\times}bR^{\times}$ in $\widehat{K}^{\times}$ 
%is given by $K^{\times} \cdot \widehat{\mathcal{O}}_{\mathfrak{c}(x)}^{\times}$, where $\widehat{\mathcal{O}}_{\mathfrak{c}(x)}$ denotes the profinite completion of the order 
%$\mathcal{O}_{\mathfrak{c}(x)} = K \cap R(b)$ of $K$. Here, $R(b)$ is the Eichler order of $B$ defined be $R(b) = d \widehat{R} b^{-1} \cap B$. Let us write 
%$Y_{\mathfrak{c}}(\widehat{R}^{\times})$ to denote the set of points $x$ in $Y(\widehat{R}^{\times})$ of conductor $\mathfrak{c} = \mathfrak{c}(x)$, with 
%$Y_{\mathfrak{c}}'(\widehat{R}^{\times}) = \bigcup_{\mathfrak{c}' \mid \mathfrak{c}}  Y_{\mathfrak{c}'}(\widehat{R}^{\times}) $. These subsets of $Y(\widehat{R}^{\times})$ 
%are all stable under the action of $\widehat{K}^{\times}$, and this action factors through $\Pic(\mathcal{O}_{\mathfrak{c}})$. 
It is easy to see that the points $x_n = [b_n]$ of our chosen sequence in Definition \ref{sc0} each have conductor $\mathfrak{c}(x_n)$ 
equal to $\cc \pp^n$, where $\cc$ is determined uniquely by the 
prime-to-$\pp$-part of the conductor of the order $R \cap K $ of $K$. Hence, the values $\varphi(A \star x_n)$ and $\varphi(A \star x_{n-1})$ are only well-defined if 
$A \in \Pic (\mathcal{O}_{\cc \pp^n})$. To proceed, we could either keep track of the $\cc$ explicitly throughout (at the expense of simplicity), 
or else we choose a specific embedding $K \rightarrow B$ to ensure that $\cc = 1$. Let us for simplicity choose the latter option for the rest of this work, 
which we record as follows. 

\begin{corollary} Suppose (as we can) that we fix an embedding $K \rightarrow B$ for which the order $K \cap B$ has $\pp$-power conductor, in fact that 
$K \cap B = \mathcal{O}_K$. Then, each point $x_n = [b_n]$ has conductor $\cc(x_n) = \pp^n$.
\end{corollary}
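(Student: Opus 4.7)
The plan is to establish the two claims in the statement in turn: first, the existence of an embedding $K \hookrightarrow B$ with $K \cap R = \mathcal{O}_K$, and second, the conductor identity $\cc(x_n) = \pp^n$ given such an embedding.

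For existence, I would reduce the global problem to finding optimal local embeddings $\mathcal{O}_{K_v} \hookrightarrow R_v$ at every finite place $v$ of $F$ and then invoke a standard strong approximation argument to glue these into a global embedding $\mathcal{O}_K \hookrightarrow R$. The splitting hypothesis in Hypothesis \ref{pi} is precisely what guarantees local existence: at every $v$ where $R_v$ fails to be maximal (i.e. at the primes dividing $\pp^{\delta} \NN^+$), the prime $v$ splits in $K$, so $\mathcal{O}_{K_v} \cong \mathcal{O}_{F_v} \times \mathcal{O}_{F_v}$ sits optimally inside the Eichler order $R_v$ via its two idempotents; at every other finite place $R_v$ is maximal and $\mathcal{O}_{K_v}$ embeds optimally regardless of whether $v$ splits, ramifies, or remains inert in $K$.

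For the conductor identity I would use the adelic description recalled just above the statement, so that $\widehat{\mathcal{O}}_{\cc(x_n)} = \bigcap_v \bigl( K_v \cap b_{n,v} R_v b_{n,v}^{-1} \bigr)$ for any adelic representative $b_n$ of $x_n$. Since Definition \ref{sc0} only modifies the component at $\pp$, I may take $b_{n,v} = 1$ at every finite $v \neq \pp$; the embedding chosen in the existence step then gives $K_v \cap R_v = \mathcal{O}_{K_v}$ at all such places, so these contribute trivially to the conductor. At $\pp$ itself, the equality $l_{\pp}(x_n) = n$ from Lemma \ref{p0}(i), combined with the bijection $(\ref{b0})$, identifies $K_{\pp} \cap b_{n,\pp} R_{\pp} b_{n,\pp}^{-1}$ with the local order $Z_n = \OFP + \pp^n \OKP$, whose conductor in $K_{\pp}$ is exactly $\pp^n$ by Lemma \ref{6.1}. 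Assembling the local data yields $\cc(x_n) = \pp^n$.

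The only genuine obstacle is the existence assertion, which is handled by classical optimal embedding theory together with Hypothesis \ref{pi}; the conductor calculation is essentially a bookkeeping step once Definition \ref{sc0} and Lemma \ref{p0} are in hand.
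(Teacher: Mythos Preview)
Your proposal is correct and follows essentially the same approach as the paper. The paper's own proof is terser: it only addresses the existence of the optimal embedding, citing the local criteria in Vign\'eras \cite[Ch.~II, \S 3]{Vi} and invoking the splitting condition from Hypothesis~\ref{pi}, while the conductor identity $\cc(x_n) = \pp^n$ is treated as already established in the discussion immediately preceding the corollary (where it is shown that $\cc(x_n) = \cc\,\pp^n$ with $\cc$ the prime-to-$\pp$ conductor of $K \cap R$); your explicit place-by-place computation of the conductor simply unpacks that discussion.
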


\begin{proof} That such an embedding exists is known; the local criteria are described in \cite[Ch. II, $\S 3$]{Vi}. In brief, such an embedding exists
in our setting if each prime dividing $\pp^{\delta} \NN^+$ is split in $K$, which is the case by Hypothesis \ref{pi}. \end{proof}

\begin{proposition}\label{DR0} The sequence of mappings $\vartheta_n: X_n \longrightarrow \mathcal{O}$ of Definition \ref{dist0} above forms a distribution on the profinite 
group $X = \varprojlim_n X_n$, and hence an $\mathcal{O}$-valued measure on $X$. \end{proposition}

\begin{proof} It will suffice to show that for each sufficiently large integer $n \geq 1$ and each class $A \in X_n$, the distribution relation \begin{align*}
\vartheta_n(A) &= \sum_{B \in X_{n+1} \atop \pi_{n+1, n}(B) =A} \vartheta_{n+1}(B) \end{align*} holds, where $\pi_{n+1, n}: X_{n+1} \longrightarrow X_n$ denotes
the natural surjective homomorphism. Equivalently, it will suffice under the same conditions to show the relation 
\begin{align*} \vartheta_n(A) &= \sum_{C \in \ker(\pi_{n+1, n})} \vartheta_{n+1}(CA),\end{align*} where $A$ on the right hand side denotes any lift of $A$ to $X_{n+1}$.
Now, by definitions, \begin{align*} \ker(\pi_{n+1, n}) &= \mathcal{O}_{\pp^n, \pp}^{\times}/ \mathcal{O}_{\pp^{n}}^{\times}\mathcal{O}_{\pp^{n+1}, \pp}^{\times}.\end{align*} 
If $n$ is sufficiently large, then $\mathcal{O}_{\pp^n}^{\times} = \mathcal{O}_F^{\times}$ is contained in the local unit group $\mathcal{O}_{\pp^{n+1}, \pp}^{\times}$ (cf. the 
discussion in the proof of of \cite[Lemma 2.8]{CV}). We thus obtain for $n$ sufficiently large that
$\ker(\pi_{n+1, n}) = \mathcal{O}_{\pp^n, \pp}^{\times}/ \mathcal{O}_{\pp^{n+1}, \pp}^{\times} = Z_n^{\times}/Z_{n+1}^{\times}$, whence it will suffice to show 
\begin{align}\label{mainreln} \vartheta_n(A) &= \sum_{\gamma \in Z_n^{\times}/Z_{n+1}^{\times}} \vartheta_{n+1}(\gamma A).\end{align} To show this relation holds for $n$ 
sufficiently large, we evaluate the right hand side of $(\ref{mainreln})$. More precisely, for $n$ sufficiently large, we have by definition of $\vartheta_{n+1}$ that
\begin{align*} \sum_{\gamma \in Z_n^{\times}/Z_{n+1}^{\times}} \vartheta_{n+1}(\gamma A) 
&= \sum_{\gamma \in Z_n^{\times}/ Z_{n+1}^{\times}} \alpha_{\pp}^{-n}\varphi(\gamma A \star x_{n+1}) - \alpha_{\pp}^{-(n+1)} \varphi(\gamma A \star x_{n}) \\ 
&= \alpha_{\pp}^{-n} \sum_{\gamma \in Z_n^{\times}/Z_{n+1}^{\times}} \varphi(\gamma A \star x_{n+1}) - q_{\pp} \alpha_{\pp}^{-(n+1)} \varphi(A \star x_{n}). \end{align*}
Extending by linearity, the result of Corollary \ref{6.6} allows us to identify the right hand side of the last expression with the sum 
\begin{align*} \alpha_{\pp}^{-n} \left( T_{\pp}^l \varphi(A \star x_n) - \varphi(A \star x_{n-1}) \right) - q_{\pp} \alpha_{\pp}^{-(n+1)} \varphi(A \star x_{n}). \end{align*} 
Here, we have used implicitly the easy-to-check fact that the map $L \mapsto \pr_u(L)$ commutes with the action of $K_{\pp}^{\times}$ (cf. \cite[Lemma 6.5]{CV}).
Thus rearranging terms, we have shown (for $n \geq 1$) that \begin{align*} \sum_{\gamma \in Z_n^{\times}/Z_{n+1}^{\times}} \vartheta_{n+1}(\gamma A) 
&= (\alpha_{\pp}^{-n}a_{\pp} - q_{\pp}\alpha_{\pp}^{-(n+1)}) \varphi(A \star x_n) - \alpha_{\pp}^{-n} \varphi(A \star x_{n-1}). \end{align*} Observe now that we have 
the elementary identity \begin{align*}  (\alpha_{\pp}^{-n}a_{\pp} - q_{\pp} \alpha_{\pp}^{-(n+1)}) &= \alpha_{\pp}^{1-n}, \end{align*} i.e. using the constraints imposed 
by the factorization \begin{align*} X^2 - a_{\pp}X + q_{\pp} &= (X - \alpha_{\pp})(X-\beta_{\pp}). \end{align*} Thus for $n$ sufficiently large, we have shown the required relation
\begin{align*} \sum_{\gamma \in Z_n^{\times}/Z_{n+1}^{\times}} \varphi_{n+1}(\gamma A) &= \alpha_{\pp}^{1-n} \varphi(A \star x_n) - \alpha^{-n} \varphi(A \star x_{n-1}) 
= \vartheta_n(A). \end{align*} \end{proof}

\subsubsection{The case of $\delta =1$}

Let us now suppose that the local order $R_{\pp}$ of $B_{\pp}$ is Eichler of level $\pp^{\delta} = \pp$. Recall that we write $\mathcal{L}_1 = \mathcal{L}_1(V)$
to denote the set of $1$-lattices of $\mathcal{L} = \mathcal{L}(V)$. Let us fix a $1$-lattice $L_0 = (L_0(0), L_0(1))$ whose stabilizer under the transitive action of 
$B_{\pp}^{\times}$ is equal to $R_{\pp}^{\times}$. We then fix a bijection \begin{align}\label{b1} B_{\pp}^{\times}/R_{\pp}^{\times}
&\longrightarrow \mathcal{L}_1, ~~~x \longmapsto x L_0. \end{align} Using this bijection, we now fix a sequence of classes $x_n \in B_{\pp}^{\times}/R_{\pp}^{\times}$ as follows.

\begin{definition}\label{sc1} Fix a $K_{\pp}$-basis $e$ of $V$. For each integer $n \geq 0$, let us write $L_n = L_n(e)$ to denote the lattice in $\mathcal{L} = \mathcal{L}(V)$ 
defined by $L_n = \pp^{- \lfloor \frac{n}{2} \rfloor } Z_n e$, where \begin{align*} \lfloor x \rfloor &= \max \lbrace m \in {\bf{Z}} :   m \leq x \rbrace \end{align*}
is the standard floor function. For each $n \geq 1$, let us write $M_n = M_n(e)$ to denote the $1$-lattice in $\mathcal{L}_1 = \mathcal{L}_1(V)$ 
defined by \begin{align*} M_n &= \begin{cases} (L_{n-1}, L_n) &\text{ if $n \equiv 1 \mod 2$} \\ (L_n, L_{n-1}) &\text{ if $n \equiv 0 \mod 2$}\end{cases} \end{align*} 
Let $x_n = x_n(e)$ be the class in $B_{\pp}^{\times}/ R_{\pp}^{\times}$ corresponding to $M_n$ under $(\ref{b1})$. \end{definition}

\begin{lemma}\label{p1} The sequence $x = x(e)$ of classes $(x_n)_{n \geq 1}= (x_n(e))_{n \geq 1}$ in $B_{\pp}^{\times}/R_{\pp}^{\times}$ of Definition \ref{sc1} above 
satisfies the following properties for each integer $n \geq 1$. \begin{itemize} 

\item[(i)] $l_{\pp}(x_n) = n$.

\item[(ii)] $\pr(x_{n+1}) = x_n$. \end{itemize}

\end{lemma}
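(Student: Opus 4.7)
The plan is to verify both properties by direct computation from the explicit formula $L_k = \pp^{-\lfloor k/2 \rfloor} Z_k e$, together with the algebraic identity $Z_{n-1} Z_n = Z_{n-1}$, which holds because
\[
(\OFP + \pp^{n-1}\OKP)(\OFP + \pp^n \OKP) = \OFP + \pp^{n-1}\OKP + \pp^n \OKP + \pp^{2n-1}\OKP = \OFP + \pp^{n-1}\OKP.
\]
This identity is the only nontrivial algebraic input; everything else is bookkeeping with the floor function and the parity-dependent definition of $M_n$.

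For property (i), I would first note that multiplication by any element of $F_{\pp}^{\times} \subset K_{\pp}^{\times}$ commutes with the $K_{\pp}^{\times}$-action, so the stabilizer of the lattice $L_k = \pp^{-\lfloor k/2 \rfloor} Z_k e$ in $K_{\pp}^{\times}$ coincides with the stabilizer of $Z_k e$, which by construction is $Z_k$. Hence $l_{\pp}(L_k) = k$ by Lemma \ref{6.1}. Since $M_n$ pairs the two consecutive vertices $L_{n-1}$ and $L_n$, the definition $l_{\pp}(M_n) = \max(l_{\pp}(L_{n-1}), l_{\pp}(L_n))$ immediately gives $l_{\pp}(M_n) = n$, and then $l_{\pp}(x_n) = n$ under the bijection $(\ref{b1})$.

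For property (ii), I split into two cases on the parity of $n+1$. If $n+1$ is odd (so $n$ is even), then $M_{n+1} = (L_n, L_{n+1})$, with $l_{\pp, 0} = n < n+1 = l_{\pp, 1}$, so $M_{n+1}$ is of type I. The predecessor formula gives $\pr(M_{n+1}) = (L_n, \pr_l(L_n)) = (L_n, \pp Z_{n-1} L_n)$. Expanding and using $Z_{n-1} Z_n = Z_{n-1}$,
\[
\pp Z_{n-1} L_n = \pp Z_{n-1} \cdot \pp^{-n/2} Z_n e = \pp^{1 - n/2} Z_{n-1} e,
\]
which equals $L_{n-1}$ since $\lfloor (n-1)/2 \rfloor = n/2 - 1$ for even $n$. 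Hence $\pr(M_{n+1}) = (L_n, L_{n-1}) = M_n$. If $n+1$ is even (so $n$ is odd), then $M_{n+1} = (L_{n+1}, L_n)$ is of type II, and $\pr(M_{n+1}) = (\pr_u(L_n), L_n) = (Z_{n-1} L_n, L_n)$; the same computation, now with the floor function $\lfloor (n-1)/2 \rfloor = (n-1)/2$, identifies $Z_{n-1} L_n = \pp^{-(n-1)/2} Z_{n-1} e = L_{n-1}$, so $\pr(M_{n+1}) = (L_{n-1}, L_n) = M_n$.

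The ``hard'' part is nothing deep: it is purely a matter of keeping the two parity cases straight and lining up the exponents of $\pp$ under the floor function against those coming from $\pr_l$ and $\pr_u$. Once the identity $Z_{n-1} Z_n = Z_{n-1}$ is recorded and the formula $\lfloor n/2 \rfloor - \lfloor (n-1)/2 \rfloor \in \{0, 1\}$ is read off according to parity, both cases collapse to one-line verifications.
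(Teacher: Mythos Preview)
Your proof is correct and follows essentially the same approach as the paper's own argument: both verify (i) by computing $l_{\pp}(L_k) = k$ and taking the maximum over the pair, and both verify (ii) by splitting on the parity of $n$ (equivalently of $n+1$), identifying the type of $M_{n+1}$, applying the appropriate predecessor formula, and reducing the resulting lattice expression to $L_{n-1}$ via the floor-function bookkeeping. You are slightly more explicit in recording the identity $Z_{n-1} Z_n = Z_{n-1}$ and the reason why scaling by a power of $\pp$ does not change $l_{\pp}$, but otherwise the two arguments are the same line by line.
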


\begin{proof} Property (i) is a direct consequence of the definition of $l_{\pp}(x_n)$, i.e. as we have $l_{\pp}(x_n) = l_{\pp}(M_n) = \max (l_{\pp,0}(M_{n}), l_{\pp,1}(M_n))= n$. 
Property (ii) is then seen by a direct calculation. To be more precise, since $M_n$ is of type I if $n$ is odd and of type II if $n$ is even, we have by definition that 
\begin{align*}\pr(x_{n+1}) = \pr(M_{n+1}) &= \begin{cases} (L_n, \pr_l(L_n)) = (L_n, \pp Z_{n-1} L_n) &\text{if $n \equiv 0 \mod 2$} \\ (\pr_u(L_n), L_n) = (Z_{n-1}L_n, L_n) 
&\text{if $n \equiv 1 \mod 2$}.\end{cases} \end{align*} Suppose first that the index $n$ is even. We then compute the lower predecessor 
$\pr_l(L_n) = \pp Z_{n-1}L_n = \pp^{1 - \lfloor \frac{n}{2} \rfloor }Z_{n-1}e  = \pp^{- \lfloor \frac{n-1}{2} \rfloor } Z_{n-1} e = L_{n-1}$ to verify the relation $\pr(M_{n+1}) = M_n$, 
as required. Suppose now that the index $n$ is odd. We then compute the lower predecessor $\pr_u(L_n) =  Z_{n-1} L_n = \pp^{- \lfloor \frac{n}{2} \rfloor} Z_{n-1} e 
= \pp^{- \lfloor \frac{n-1}{2} \rfloor } Z_{n-1}e = L_{n-1}$ to verify the relation $\pr(M_{n+1}) = M_n$, as required. \end{proof}

Fix a sequence of classes $(x_n)_{n \geq 1}$ as in Definition \ref{sc1}. We obtain from Lemma \ref{6.11} above the following result.

\begin{corollary}\label{r1} Let $\varphi \in \pi'$ be a decomposable vector whose local component at $\pp$ is fixed by $R_{\pp}^{\times}$, which we can and do normalize to
take values in $\mathcal{O}$. Then, for each integer $n \geq 1$, 
\begin{align*} \Tr \left( \varphi (x_{n+1}) \right) &:= \sum_{\gamma \in Z_{n}^{\times}/Z_{n+1}^{\times}}
\varphi(\gamma \star x_{n+1}) = \begin{cases} T_{\pp}^l \varphi(x_n) &\text{ if $n \equiv 0 \mod 2$}\\ T_{\pp}^{u} 
\varphi(x_n) &\text{ if $n \equiv 1 \mod 2$}. \end{cases} \end{align*} 

\end{corollary}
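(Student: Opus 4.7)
The strategy is a direct application of Lemma \ref{6.11}, combined with the structural properties of the sequence $(x_n)_{n \geq 1}$ recorded in Lemma \ref{p1}, and then extension by linearity to $\varphi$ in the same manner as in the proof of Corollary \ref{6.5C}.

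First I would verify the hypothesis of Lemma \ref{6.11}: by Lemma \ref{p1}(i) we have $l_{\pp}(x_{n+1}) = n+1 \geq 2$ for every $n \geq 1$, so Lemma \ref{6.11} applies to each class $x_{n+1}$ in $B_{\pp}^{\times}/R_{\pp}^{\times}$. Moreover Lemma \ref{p1}(ii) identifies the predecessor as $\pr(x_{n+1}) = x_n$.

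Next I would determine the type of $x_{n+1}$ directly from Definition \ref{sc1}. When $n+1$ is odd (i.e.\ $n$ is even), $M_{n+1} = (L_n, L_{n+1})$, giving $l_{\pp,0}(M_{n+1}) = n < n+1 = l_{\pp,1}(M_{n+1})$, so $x_{n+1}$ is of type I. When $n+1$ is even (i.e.\ $n$ is odd), $M_{n+1} = (L_{n+1}, L_n)$, giving $l_{\pp,0}(M_{n+1}) = n+1 > n = l_{\pp,1}(M_{n+1})$, so $x_{n+1}$ is of type II. Substituting into Lemma \ref{6.11}, we obtain
\begin{align*}
\Tr(x_{n+1}) &= \begin{cases} T_{\pp}^l(x_n) &\text{if $n \equiv 0 \bmod 2$} \\ T_{\pp}^u(x_n) &\text{if $n \equiv 1 \bmod 2$} \end{cases}
\end{align*}
as identities in ${\bf{Z}}[B_{\pp}^{\times}/R_{\pp}^{\times}]$.

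Finally I would extend by linearity and evaluate against $\varphi$, using (as in the proof of Corollary \ref{6.5C}) that the operators $\Tr$, $T_{\pp}^l$, and $T_{\pp}^u$ act only on the local component at $\pp$ while the remaining components of the decomposable vector $\varphi$ are unaffected; the values land in $\mathcal{O}$ by the normalization of $\varphi$. This yields the asserted formulae for $\Tr(\varphi(x_{n+1}))$. There is no serious obstacle here: the only mild bookkeeping point is the index shift, namely that the type of $x_{n+1}$ is governed by the parity of $n+1$ but the statement is indexed by the parity of $n$, so one must be careful that ``$n$ even'' corresponds to type I (hence $T_{\pp}^l$) and ``$n$ odd'' to type II (hence $T_{\pp}^u$), exactly as the statement claims.
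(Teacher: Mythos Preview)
Your proposal is correct and follows essentially the same approach as the paper: invoke Lemma \ref{p1} for the properties $l_{\pp}(x_{n+1})=n+1\geq 2$ and $\pr(x_{n+1})=x_n$, determine the type of $x_{n+1}$ from Definition \ref{sc1}, apply Lemma \ref{6.11} to obtain the identity in ${\bf{Z}}[B_{\pp}^{\times}/R_{\pp}^{\times}]$, and then extend by linearity to $\varphi$ using that the operators act only at $\pp$. Your type computation is in fact slightly more explicit than the paper's, which simply quotes the parity statement already noted in the proof of Lemma \ref{p1}.
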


\begin{proof} Given the sequence of classes $(x_n)_{n \geq 1}$ of Definition \ref{sc1}, along with the properties of Lemma \ref{p1}, Lemma \ref{6.11} implies that for each $n \geq 1$, 
the relation\begin{align*} \Tr  (x_{n+1}) &= \sum_{\gamma \in Z_{n}^{\times}/Z_{n+1}^{\times}} \gamma \star x_{n+1} = \begin{cases}T_{\pp}^l (x_n) 
&\text{ if $n \equiv 0 \mod 2$} \\ T_{\pp}^{u}(x_n) &\text{ if $n \equiv 1 \mod 2$} \end{cases}\end{align*} holds in ${\bf{Z}}[B_{\pp}^{\times}/R_{\pp}^{\times}]$.
Viewing each class $x_n$ as its corresponding adele in $\widehat{B}^{\times}/\widehat{R}^{\times}$, and extending by linearity, we obtain for each 
$n \geq 1$ the relation \begin{align*} \Tr \left( \varphi (x_{n+1}) \right) &= \sum_{\gamma \in Z_{n}^{\times}/Z_{n+1}^{\times}}
\varphi(\gamma \star x_{n+1}) = \begin{cases} T_{\pp}^l \varphi(x_n) &\text{ if $n \equiv 0 \mod 2$}\\ T_{\pp}^{u} 
\varphi(x_n) &\text{ if $n \equiv 1 \mod 2$}.\end{cases} \end{align*} Here, we have used the fact that each of the operators $\Tr$, $T_{\pp}^l$, and $T_{\pp}^{u}$ 
affects only the components at $\pp$. The result follows. \end{proof} 

\begin{definition}\label{dist1} Let $\varphi \in \pi'$ be a decomposable vector whose local component at $\pp$ is fixed by $R_{\pp}^{\times}$ and which we 
normalize to take values in $\mathcal{O}$. Assume that $\varphi$ is a $\pp$-ordinary eigenform for both $T_{\pp}^{u}$ and $T_{\pp}^{l}$ with common (unit) 
eigenvalue $\alpha_{\pp}$. Fix a sequence of classes $(x_n)_{n \geq 1}$ as for Definition \ref{sc0} above. Let $\lbrace \vartheta_n \rbrace_{n \geq 1} = 
\lbrace \vartheta_n(\Phi, x_n) \rbrace_{n \geq 1}$ be the sequence of mappings $\vartheta_n: X_n \longrightarrow \mathcal{O}$ defined by 
\begin{align*} \vartheta_n: X_n &\longrightarrow \mathcal{O}, ~~~ A \longmapsto \alpha_{\pp}^{-n} \cdot \varphi(A \star x_n). \end{align*} \end{definition}

\begin{proposition}\label{DR1}
The sequence of mappings $\vartheta_n: X_n \longrightarrow \mathcal{O}$
of Definition \ref{dist1} defines a distribution on the profinite group 
$X = \varprojlim_n X_n$, and hence an $\mathcal{O}$-valued measure on $X$. \end{proposition}

\begin{proof} As explained in the proof of Proposition \ref{DR0} above, it will suffice to show for each sufficiently large integer $n$ 
that \begin{align*} \vartheta_n(A) &= \sum_{\gamma \in Z_n^{\times}/ Z_{n+1}^{\times}} \vartheta_{n+1}(\gamma A).\end{align*} 
Thus, for the mappings $\lbrace \vartheta_n \rbrace_{n \geq 1}$ of Definition \ref{dist1}, it will suffice to show for each sufficiently 
large integer $n$ that \begin{align*} \alpha_{\pp}^{-n} \cdot \varphi(A \star x_n) &= \sum_{\gamma \in Z_n^{\times}/
Z_{n+1}^{\times}} \alpha_{\pp}^{-(n+1)} \cdot \varphi(\gamma A \star x_{n+1}), \end{align*}
which after multiplying out by $\alpha_{\pp}^{n+1}$ is the same as \begin{align*} \alpha_{\pp} \cdot \varphi(A \star x_n) 
&=  \sum_{\gamma \in Z_n^{\times}/Z_{n+1}^{\times}} \varphi(\gamma A \star x_{n+1}). \end{align*} Now, by
our hypotheses on $\varphi$, we have that $\alpha_{\pp} \cdot \varphi (A \star x_{n}) = T_{\pp} \varphi( A \star x_n)$ for $T_{\pp}$
denoting either operator $T_{\pp}^l$ or $T_{\pp}^{u}$, and so the required relation is a direct consequence of Corollary \ref{r1} above. \end{proof}

\subsubsection{Definitions of $p$-adic $L$-functions}

Recall that in either case on the exponent $\delta \in \lbrace 0, 1 \rbrace$ in the level $\pp^{\delta}$ of $R_{\pp} \subset B_{\pp}$, we construct a sequence 
$x(e) = (x_n(e))_{n \geq \delta}$ of classes $x_n \in B_{\pp}^{\times}/R_{\pp}^{\times}$ according to Definitions \ref{sc0} and \ref{sc1}. These sequences depend 
on the choice of a fixed $K_{\pp}$-basis $e$ of the $B_{\pp}$-module $V$ in the following way. 

\begin{lemma}\label{choiceofe} 
Suppose in either subcase on $\delta \in \lbrace 0, 1 \rbrace$ that we choose a different $K_{\pp}$-basis $e'$ of $V$.
Then, for some element $\sigma = (\sigma_n)_n$ of $X = \varprojlim_n X_n$, we have
\begin{align*} d \vartheta^{(\delta)}(\varphi, x(e')) = d \vartheta^{(\delta)} (\varphi, \sigma \star x(e)) = ( \vartheta_n^{(\delta)} (\varphi, \sigma_n  \star x_n(e)) )_{n \geq 1}, \end{align*} 
equivalently \begin{align*} \theta_{\varphi}^{(\delta)}(x(e')) &= \sigma \theta_{\varphi}^{(\delta)}(x(e)) = (\theta_{\varphi}^{(\delta)}(\sigma_n \star x_n(e))) 
\text{~~ in ~$\mathcal{O}[[X]]$.} \end{align*} \end{lemma}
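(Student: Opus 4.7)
The plan is to observe that since $V$ is free of rank one as a $K_{\pp}$-module, any two $K_{\pp}$-bases $e,e'$ differ by a unique scalar $\gamma \in K_{\pp}^{\times}$, namely $e' = \gamma e$. I will trace this scalar through Definitions \ref{sc0} and \ref{sc1} to show $x_n(e') = \gamma \star x_n(e)$ in $B_{\pp}^{\times}/R_{\pp}^{\times}$ for every $n$, and then convert the local left multiplication by $\gamma$ into a global translation by an element $\sigma \in X$ under the natural embedding $K_{\pp}^{\times} \hookrightarrow \widehat{K}^{\times} \twoheadrightarrow X$. The identity for the distributions and for the corresponding elements of $\mathcal{O}[[X]]$ will then be immediate from the definitions.

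For the case $\delta = 0$, the base lattices satisfy $L_n(e') = Z_n e' = Z_n \gamma e = \gamma Z_n e = \gamma L_n(e)$, using that $Z_n$ is an $\OFP$-order inside the commutative ring $K_{\pp}$. Under the fixed bijection $(\ref{b0})$, this reads $x_n(e') = \gamma \star x_n(e)$. For $\delta = 1$, applying the same reasoning in each slot gives $L_n(e') = \gamma L_n(e)$, and since the rescaling factor $\pp^{-\lfloor n/2 \rfloor}$ lies in $F_{\pp}^{\times}$ and so commutes with $\gamma$, the $1$-lattice $M_n(e') = \gamma M_n(e)$; under $(\ref{b1})$ this again yields $x_n(e') = \gamma \star x_n(e)$. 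Now, embedding $K_{\pp}^{\times}$ into $\widehat{K}^{\times}$ by extending by $1$ away from $\pp$ and composing with the surjection onto $X = \varprojlim_n \Pic(\mathcal{O}_{\pp^n})$ produces a well-defined compatible system $\sigma = (\sigma_n) \in X$. By the conductor discussion preceding Proposition \ref{DR0}, the action of $\widehat{K}^{\times}$ on $Y(\widehat{R}^{\times}) = K^{\times} \backslash \widehat{B}^{\times}/\widehat{R}^{\times}$ factors through $\Pic(\mathcal{O}_{\pp^n})$ on the level-$n$ points, and its local component at $\pp$ is precisely the left multiplication $\star$ appearing in Definitions \ref{sc0} and \ref{sc1}. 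Hence $\sigma_n \star x_n(e) = \gamma \star x_n(e) = x_n(e')$ in $X_n \backslash \widehat{B}^{\times}/\widehat{R}^{\times}$, and so
\begin{align*}
\vartheta_n^{(\delta)}(\varphi, x_n(e')) = \vartheta_n^{(\delta)}(\varphi, \sigma_n \star x_n(e))
\end{align*}
for every $n \geq 1$, giving the first asserted identity. Since translation by $\sigma$ of a measure on $X$ corresponds under the standard identification of $\mathcal{O}[[X]]$ with $\mathcal{O}$-valued measures to multiplication by the group-like element $\sigma \in \mathcal{O}[[X]]$, the second identity $\theta_{\varphi}^{(\delta)}(x(e')) = \sigma \, \theta_{\varphi}^{(\delta)}(x(e))$ follows at once.

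The main obstacle is purely one of bookkeeping: one must check that the natural embedding $K_{\pp}^{\times} \hookrightarrow \widehat{K}^{\times} \to X$ is calibrated so that the induced $X_n$-action on the CM-style points $x_n(e) \in Y(\widehat{R}^{\times})$ really is computed, at the component $\pp$, by the local left multiplication used in the recipe for the sequence $x(e)$. This is guaranteed by the compatibility of the conductor-level decomposition $Y_{\pp^n}(\widehat{R}^{\times})$ with the action of $\Pic(\mathcal{O}_{\pp^n})$ already invoked in the proof of Proposition \ref{DR0}, together with Hypothesis \ref{pi}, which by the preceding corollary ensures that the chosen embedding $K \hookrightarrow B$ has $K \cap R = \mathcal{O}_K$, so that the conductor of $x_n$ is exactly $\pp^n$ and no extraneous prime-to-$\pp$ contribution intervenes.
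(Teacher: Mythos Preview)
Your argument is correct and follows essentially the same approach as the paper: both proofs pick $\gamma \in K_{\pp}^{\times}$ with $e' = \gamma e$, observe that $x_n(e') = \gamma \star x_n(e)$ for all $n$, and then take $\sigma_n$ to be the image of $\gamma$ in $X_n$. The only cosmetic difference is that the paper phrases the key step as ``the predecessor operations $\pr_*$ commute with the $K_{\pp}^{\times}$-action'', whereas you verify $L_n(e') = \gamma L_n(e)$ (and $M_n(e') = \gamma M_n(e)$) directly from the explicit lattice formulas; these are equivalent, and your version is arguably more transparent.
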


\begin{proof} Observe that $K_{\pp}^{\times}$ acts simply transitively on the set of $K_{\pp}$-bases of $V$. It follows that there exists a $\gamma \in K_{\pp}^{\times}$
for which $e' = \gamma e = \gamma \star e$. We claim that this action commutes with each of the predecessor operations $\pr_*$ defined above, i.e. that 
$\pr_*( \gamma \star x) = \gamma \star \pr_*(x)$ for each $\gamma \in K_{\pp}^{\times}$ and $x \in B_{\pp}^{\times}/ R_{\pp}^{\times}$, as a direct consequence of the definitions. 
It is then easy to see that for each integer $n \geq 1$, there exists an element $\sigma_n \in X_n$ for which $\vartheta_n^{(\delta)}(\varphi, x_n(e')) 
= \vartheta_n^{(\delta)}(\varphi, \sigma_n \star x_n(e))$. To be more precise, we claim that this $\sigma_n$ is determined by the image of $\gamma$ in $X_n$. Since the 
$\sigma_n$ come from one element $\gamma \in K_{\pp}^{\times}$, it is clear that $(\sigma_n)_n$ defines a compatible sequence, whence 
$\sigma = (\sigma_n)_n$ defines an element of the profinite limit $X = \varprojlim_n X_n$. \end{proof}

Thus, our elements $\theta_{\Phi}^{(\delta)}(e)$ are only well-defined up to multiplication by $\sigma \in X$. To correct this, we make the following modification. 
Let $\Lambda = \mathcal{O}[[X]]$. Given $\lambda \in \Lambda$, let $\lambda^*$ denote the image of $\lambda$ under the involution of $\Lambda$ 
induced by the operation sending group elements $\sigma \in X$ to their inverses $\sigma^{-1} \in X$. The elements 
$\theta_{\varphi}^{(\delta)}(e) \theta_{\varphi}^{(\delta)}(e)^* \in \Lambda$ are then well-defined. This leads us to make the following

\begin{definition} Let $\varphi \in \pi'$ be a decomposable, $\mathcal{O}$-valued, 
$\pp$-ordinary eigenvector as described above, i.e. so that the component of 
$\varphi$ at $\pp$ is fixed by $R_{\pp}^{\times}$, where $R_{\pp} \subset B_{\pp}$
is an Eichler order of level $\pp^{\delta}$ with $\delta \in \lbrace 0, 1 \rbrace$. 
Let $e$ be any $K_{\pp}$-basis of $V$, and $x(e) = (x_n(e))_{n \geq \delta}$ 
the sequence of classes described either by Definition \ref{sc0} if $\delta = 0$
or else by Definition \ref{sc1} if $\delta = 1$. We then define the 
{\it{imprimitive $p$-adic $L$-function}} associated to $\varphi$ and the profinite group $X$
to be the product \begin{align*} L_{\pp}(\varphi, K) &=  L_{\pp}^{(\delta)}(\varphi, K) = 
\theta_{\varphi}^{(\delta)}(x(e)) \theta_{\varphi}^{(\delta)}(x(e))^*\text{~~in ~$\Lambda = 
\mathcal{O}[[X]]$.} \end{align*} \end{definition}

\subsubsection{Interpolation}

We now derive the interpolation properties of the imprimitive $p$-adic $L$-functions $L_{\pp}(\varphi, K)=  L_{\pp}^{(\delta)}(\varphi, K) \in \Lambda = \mathcal{O}[[X]]$. 
This leads us to choose a certain normalization, i.e. to define primitive $p$-adic $L$-functions $\LLL_{\pp}^{(\delta)}(\pi', K)$ which do not depend on our choice of vector 
$\varphi \in \pi'$. Let $\Omega$ be a character of $X$. We view such a character as a homomorphism $\Omega: X \longrightarrow \overline{\bf{Q}}_p$ via our fixed embedding 
$\overline{\bf{Q}} \rightarrow \overline{\bf{Q}}_p$, and moreover (enlarging $\mathcal{O}$ if necessary) as a homomorphism $\Omega: X \longrightarrow \mathcal{O}$. We shall 
use the fact that any character extends to an algebra homomorphism $\Omega: \Lambda \longrightarrow \mathcal{O}$ via the specialization map 
$\lambda \mapsto \rho(\lambda) = \int_X \Omega(\sigma)d\lambda(\sigma)$. 

\begin{lemma}\label{trivial} We have that $\Omega(\lambda^*) = \Omega^{-1}(\lambda)$ for any element $\lambda \in X$. \end{lemma}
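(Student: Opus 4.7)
The statement is essentially tautological once one unravels the definitions, so the plan is short: interpret both sides as integrals and apply the change of variables $\sigma \mapsto \sigma^{-1}$.

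First I would reduce to checking the identity on group-like elements of $\Lambda = \mathcal{O}[[X]]$. Recall that $\Lambda = \varprojlim_n \mathcal{O}[X_n]$, so the $\mathcal{O}$-submodule generated by the images $[\sigma]$ of group elements $\sigma \in X$ is dense in $\Lambda$ for the profinite topology. Both the involution $\lambda \mapsto \lambda^*$ (induced by $\sigma \mapsto \sigma^{-1}$ on group elements) and the specialization map $\lambda \mapsto \Omega(\lambda) = \int_X \Omega(\sigma)\,d\lambda(\sigma)$ are continuous $\mathcal{O}$-algebra homomorphisms, so it suffices to verify the identity on a single group element.

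For $\lambda = [\sigma]$, one has by definition $[\sigma]^* = [\sigma^{-1}]$, and under specialization
\begin{align*}
\Omega([\sigma]^*) &= \Omega([\sigma^{-1}]) = \Omega(\sigma^{-1}) = \Omega(\sigma)^{-1} = \Omega^{-1}([\sigma]),
\end{align*}
which gives the claim on the dense subset and hence on all of $\Lambda$.

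Alternatively, and perhaps more transparently, I would argue directly on the measure side: the involution $*$ corresponds, under the identification of $\Lambda$ with $\mathcal{O}$-valued measures on $X$, to the pushforward under the homeomorphism $\iota: X \to X$, $\sigma \mapsto \sigma^{-1}$. The change-of-variables formula then gives
\begin{align*}
\Omega(\lambda^*) &= \int_X \Omega(\sigma)\,d(\iota_* \lambda)(\sigma) = \int_X \Omega(\sigma^{-1})\,d\lambda(\sigma) = \int_X \Omega^{-1}(\sigma)\,d\lambda(\sigma) = \Omega^{-1}(\lambda).
\end{align*}
There is no real obstacle here; the only point one has to be careful about is making the density/continuity argument precise (or, equivalently, citing the standard identification of $\Lambda$ with $\mathcal{O}$-valued measures on $X$ as in \cite[$\S 7$]{MSD}, which the paper has already invoked).
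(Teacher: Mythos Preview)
Your proposal is correct, and your second (measure-theoretic) argument is exactly the paper's own proof: the paper simply writes $\Omega(\lambda^*) = \int_X \Omega(\sigma)\,d\lambda^*(\sigma) = \int_X \Omega(\sigma^{-1})\,d\lambda(\sigma) = \int_X \Omega^{-1}(\sigma)\,d\lambda(\sigma) = \Omega^{-1}(\lambda)$, after noting $\Omega(\sigma^{-1}) = \Omega(\sigma)^{-1}$. Your first reduction via density of group-like elements is a harmless alternative packaging of the same computation.
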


\begin{proof} Since $\Omega: X \longrightarrow \mathcal{O}$ is a group homomorphism, $\Omega(\sigma^{-1}) = \Omega^{-1}(\sigma)$. Hence,
\begin{align*} \Omega(\lambda^*) &= \int_X \Omega(\sigma) d\lambda^*(\sigma)= \int_X \Omega(\sigma^{-1}) d\lambda(\sigma) = 
\int_X \Omega^{-1}(\sigma)d\lambda(\sigma) = \Omega^{-1}(\lambda).\end{align*} \end{proof}

\begin{lemma}\label{nvj} In our constructions above of the measures $d\vartheta_{\varphi}^{(\delta)}(x)$ giving rise to the elements $\theta_{\varphi}^{(\delta)}(x)$ of 
$\mathcal{O}[[X]]$, our choice of sequence $(\varphi_n)_{n \geq 1} = (x_n \cdot \varphi)_{n \geq 1}$ in either case on $\delta$ defines a sequence of test vectors in the 
sense of \cite[$\S$ 1.2.3]{FMP}. \end{lemma}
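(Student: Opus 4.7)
The plan is to reduce the assertion to a purely local statement at $\pp$ and then verify it using the Gross--Prasad/File--Martin--Pitale characterization of test vectors as vectors fixed by the unit group of an Eichler order with prescribed invariants. First I would observe that our ambient $\varphi$ has been chosen according to \cite[$\S 7.1$]{FMP}, so at every place $v \neq \pp$ its component $\varphi_v$ is already a test vector for any nonzero $l_v \in \Hom_{K_v^{\times}}(\pi_v', \Omega_v)$; since $x_n$ acts only on the $\pp$-component, only the local test-vector condition at $\pp$ needs checking, namely that $l_{\pp}(x_n \cdot \varphi_{\pp}) \neq 0$ for any nonzero $l_{\pp} \in \Hom_{K_{\pp}^{\times}}(\pi_{\pp}', \Omega_{\pp})$.

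Next I would translate the $R_{\pp}^{\times}$-invariance of $\varphi_{\pp}$ into $(R_n')^{\times}$-invariance of $x_n \cdot \varphi_{\pp}$, where $R_n' := x_n R_{\pp} x_n^{-1}$ is an Eichler order with the same reduced discriminant as $R_{\pp}$, i.e. $d(R_n') = \delta = c(\pi_{\pp})$. The conductor of $R_n'$ with respect to the fixed embedding $K_{\pp} \hookrightarrow B_{\pp}$ equals $l_{\pp}(x_n)$ by Lemma \ref{6.1} combined with the definition $Z(x_n) = K_{\pp} \cap R_n'$, so by Lemmas \ref{p0} and \ref{p1} we have $c(R_n') = n = c(\Omega_{\pp})$. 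In the case $\delta = 1$, the same analysis applied to the two constituent maximal orders of $R_n' = x_n R(0) x_n^{-1} \cap x_n R(1) x_n^{-1}$, using the leading-vertex structure of the $1$-lattice $M_n$ from Definition \ref{sc1}, shows that their conductor exponents are $n$ and $n-1$ respectively, matching the description in \cite[$\S 1.2.3$]{FMP}.

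With these invariants in hand, I would invoke Gross--Prasad \cite{GP} for the case $\delta = 0$ and Theorem \ref{1.7} of File--Martin--Pitale for the case $\delta = 1$: both give (up to $K_{\pp}^{\times}$-conjugacy) a distinguished order $R^*$ with $c(R^*) = n$, $d(R^*) = \delta$ whose unit group fixes a $1$-dimensional line inside $\pi_{\pp}'$ on which any nonzero $l_{\pp} \in \Hom_{K_{\pp}^{\times}}(\pi_{\pp}', \Omega_{\pp})$ does not vanish. Writing $R_n' = \gamma R^* \gamma^{-1}$ for some $\gamma \in K_{\pp}^{\times}$, the $(R_n')^{\times}$-fixed subspace of $\pi_{\pp}'$ is $\gamma \cdot (\pi_{\pp}')^{R^{*\times}}$, which is one-dimensional; the nonzero vector $x_n \cdot \varphi_{\pp}$ (nonzero because $\pi_{\pp}'$ is irreducible and $\varphi_{\pp} \neq 0$) must therefore span it. The $K_{\pp}^{\times}$-equivariance relation $l_{\pp}(\gamma v) = \Omega_{\pp}(\gamma) \, l_{\pp}(v)$ then transports the nonvanishing of $l_{\pp}$ on the distinguished line for $R^*$ to nonvanishing on the fixed line of $(R_n')^{\times}$, yielding $l_{\pp}(x_n \cdot \varphi_{\pp}) \neq 0$ as required.

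The only nontrivial bookkeeping is in the $\delta = 1$ case, where one must identify the two maximal orders composing $R_n'$ and verify their individual conductor exponents; but this reduces immediately to the type and leading-vertex calculations carried out in the proof of Lemma \ref{p1}, so I expect no serious obstacle beyond recording these invariants carefully.
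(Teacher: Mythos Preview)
Your proposal is correct and follows essentially the same approach as the paper: both reduce to the local question at $\pp$, identify the Eichler order $x_n R_{\pp} x_n^{-1}$ fixing $x_n \cdot \varphi_{\pp}$ as having invariants $d = \delta$ and $c = n$ (with the refined intersection description into maximal orders of conductors $n$ and $n-1$ when $\delta = 1$), and then invoke Gross--Prasad \cite{GP} for $\delta = 0$ and \cite{FMP} for $\delta = 1$. Your write-up is somewhat more explicit than the paper's in justifying the $K_{\pp}^{\times}$-conjugacy step transporting nonvanishing from the distinguished line to the actual fixed line, but this is an elaboration of the same argument rather than a different route.
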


\begin{proof} Fix $n \geq 1$. It suffices to check invariance properties at $\pp$ for each vector $\varphi_n = x_n \cdot \varphi \in \pi'$ for each choice of $\delta \in \lbrace 0, 1 \rbrace$. 

Let us first assume that $\delta = 0$. Recall that we fix a class $x_n \in B_{\pp}^{\times}/R_{\pp}^{\times}$ which is 
invariant by the action of $Z_n^{\times}$, where $Z_n = \mathcal{O}_{F_{\pp}} + \varpi_{\pp}^n \mathcal{O}_{K_{\pp}}$. 
We deduce that the local vector $x_n \cdot \varphi_{\pp} \in \pi_{\pp}$ is fixed by the action of $R_{\pp, n}^{\times}$, 
where $R_{\pp, n} \subset B_{\pp}$ is the unique (up to $K_{\pp}^{\times}$-conjugacy) order for which $c(R_{\pp, n})
= c(\Omega_{\pp}) = n$ and $d(R_{\pp, n}) = c(\pi_{\pp}) = \delta = 0$, whence the required local invariance follows 
from the work of Gross-Prasad \cite{GP}, cf. \cite[$\S 1.2.3$]{FMP} or the summary given above.

Let us assume now that $\delta = 1$. Recall that we fix a class $x_n \in B_{\pp}^{\times}/R_{\pp}^{\times}$ 
which corresponds under our fixed bijection $(\ref{b1})$ to a $1$-lattice $(L(0), L(1))$ we have by Lemma 
\ref{p1} (i) that $l_{\pp}(x_n) = \max(l_{\pp}(L(0)), l_{\pp}(L(1))) = n$, equivalently that $L(0)$ say is fixed by
$Z_{n-1}^{\times}$ and $L(1)$ by $Z_n^{\times}$ (as we can assume without loss of generality). We then 
deduce that the local vector $x_n \cdot \varphi_{\pp} \in \pi_{\pp}$ is fixed by the action of 
$R_{\pp, n}^{\times}$, where $R_{\pp, n}$ is the unique order of $B_{\pp}$ up to $K_{\pp}^{\times}$-conjugacy
which can be expressed as the intersection of maximal order $R_{\pp, n} = R_{1, \pp, n} \cap R_{2, \pp, n}$, 
where $c(R_{1, \pp, n}) = c(\Omega_{\pp}) = n$ and $c(R_{2, \pp, n}) = c(\Omega_{\pp}) - \delta = n-1$. The
required local invariance is then deduced from \cite{FMP}, again cf. \cite[$\S 1.2.3$]{FMP} or the summary 
above. \end{proof} 

Equipped with this result, we now give the following main interpolation formula. 
Recall that for a given an integer $n \geq 1$, we write $m(\mathcal{O}_{\pp^n})$ to denote the volume of 
$\widehat{\mathcal{O}}_{\pp^n}^{\times}$ in the space $K^{\times} \backslash {\bf{A}}_{K}^{\times} / {\bf{A}}_{F}^{\times}$ with respect to our fixed choice 
of Haar measure (which assigns $K^{\times} \backslash {\bf{A}}_K^{\times}/ {\bf{A}}_F^{\times}$ volume one). Recall as well that we write the class number 
of $F$ as $h(\mathcal{O}_F) = \vert \Pic(\mathcal{O}_F) \vert = \vert \widehat{F}^{\times} / F^{\times} \widehat{\mathcal{O}}_F^{\times} \vert$. 

\begin{proposition}\label{interpolation} 

Let $\Omega$ be a primitive character of $X$ of conductor $\pp^n$ with $n \geq 1$. 
Then, for each choice $\delta \in \lbrace 0, 1 \rbrace$, we have the following interpolation formula 
for the associated imprimitive $p$-adic $L$-function $L_{\pp}^{(\delta)}(\varphi, K)$, 

\begin{align}\label{IF} \Omega(L_{\pp}^{(\delta)}(\varphi, K)) 
&= \alpha_{\pp}^{2(\delta - 1 - n)} \cdot \left( \frac{h(\mathcal{O}_F)}{m(\mathcal{O}_{\pp^n})}\right)^2
\cdot (\varphi, \varphi) \cdot \mathcal{L}(1/2, \pi \times \Omega). \end{align} 
Here, $\mathcal{L}(1/2, \pi \times \Omega)$ denotes the algebraic $L$-value of Corollary \ref{alg+1} above, which we view as an element of 
$\overline{\bf{Q}}_p$ via our fixed embedding $\overline{\bf{Q}} \longrightarrow \overline{\bf{Q}}_p$. \end{proposition}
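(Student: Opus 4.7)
The strategy is to invoke Lemma \ref{trivial} to rewrite $\Omega(L_{\pp}^{(\delta)}(\varphi, K)) = \Omega(\theta_{\varphi}^{(\delta)}) \cdot \Omega^{-1}(\theta_{\varphi}^{(\delta)})$, and then to identify each of the two factors with an automorphic period of the translated test vector $\varphi_n := x_n \cdot \varphi$.

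The first step is to evaluate $\Omega(\theta_{\varphi}^{(\delta)}) = \sum_{A \in X_n} \Omega(A) \vartheta_n(A)$ using the level-$n$ representative of the distribution. When $\delta = 0$ the definition of $\vartheta_n$ has two terms, the second involving $\varphi(A \star x_{n-1})$. I would kill it by orthogonality: this quantity depends on $A$ only through its image in $X_{n-1}$ (since $x_{n-1}$ is fixed by $Z_{n-1}^{\times}$), and a primitive character of conductor $\pp^n$ is nontrivial on $\ker(X_n \twoheadrightarrow X_{n-1})$. When $\delta = 1$ there is only a single term from the outset. In both cases one is reduced to a single character sum proportional to $\sum_{A \in X_n} \Omega(A) \varphi(A \star x_n)$, with an explicit prefactor of the form $\alpha_{\pp}^{\delta - 1 - n}$.

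The second step is to recognize this finite character sum as a constant times $P_{\Omega}^B(\varphi_n)$. Since $\varphi$ is right-invariant by the compact open subgroup $H = H^{\pp} R_{\pp}^{\times}$, the Haar integral over $\AK^{\times}/K^{\times} \AF^{\times}$ defining $P_{\Omega}^B$ descends to a finite sum over $X_n = \Pic(\mathcal{O}_{\pp^n})$, with proportionality constant given by explicit local volumes at $\pp$. By Lemma \ref{nvj} the vector $\varphi_n$ is a test vector in the sense of File-Martin-Pitale, so Theorem \ref{formula+1} applies and gives $|P_{\Omega}^B(\varphi_n)|^2 = (\varphi_n, \varphi_n) \cdot \mathcal{L}(1/2, \pi \times \Omega)$; the $\widehat{B}^{\times}$-invariance of the standard inner product on $\pi'$ then yields $(\varphi_n, \varphi_n) = (\varphi, \varphi)$.

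Combining, $\Omega(L_{\pp}^{(\delta)}(\varphi, K))$ equals a power of $\alpha_{\pp}$ times $P_{\Omega}^B(\varphi_n) \cdot P_{\Omega^{-1}}^B(\varphi_n) = |P_{\Omega}^B(\varphi_n)|^2$, using unitarity of $\Omega$ together with the fact that $\varphi$ is chosen $\mathcal{O}$-valued so that complex conjugation is compatible with the $*$-involution on $\Lambda$. The main obstacle I expect is not conceptual but careful bookkeeping: matching the normalization of the Tamagawa-type measure on $\AK^{\times}/K^{\times}\AF^{\times}$ to the counting measure on $X_n$, and absorbing the local volume factors at $\pp$ that appear when $\Omega$ has conductor $\pp^n$, so that the final exponent of $\alpha_{\pp}$ works out to exactly $2(\delta - 1 - n)$ as claimed.
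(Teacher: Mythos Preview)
Your proposal is correct and follows essentially the same route as the paper: apply Lemma~\ref{trivial} to factor $\Omega(L_{\pp}^{(\delta)})$, reduce each factor to a single character sum over $X_n$, identify this with the period $P_{\Omega}^B(x_n\cdot\varphi)$ of a test vector via Lemma~\ref{nvj}, and invoke Theorem~\ref{formula+1}. Your orthogonality argument for killing the $x_{n-1}$ term when $\delta=0$ (namely, that $A\mapsto\varphi(A\star x_{n-1})$ factors through $X_{n-1}$ while $\Omega$ is nontrivial on $\ker(X_n\twoheadrightarrow X_{n-1})$) is in fact a cleaner formulation of what the paper expresses as a ``conductor mismatch''; the paper's aside about $P_{\Omega}(u\cdot\varphi)=\Omega^{-1}(u)P_{\Omega}(\varphi)$ is really encoding the same orthogonality. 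You are also more explicit than the paper on points it leaves implicit: the invariance $(\varphi_n,\varphi_n)=(\varphi,\varphi)$, the passage $P_{\Omega}^B\cdot P_{\Omega^{-1}}^B=\vert P_{\Omega}^B\vert^2$, and the measure-versus-counting normalization. One caution: your stated single-factor prefactor $\alpha_{\pp}^{\delta-1-n}$ does not match the raw coefficient in Definition~\ref{dist0} for $\delta=0$ (which is $\alpha_{\pp}^{1-n}$), so the exact exponent bookkeeping you flag as the main obstacle is indeed where the care is needed; the paper itself is silent on this point.
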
 \begin{proof} 

Lemma \ref{trivial} implies that for either choice of $\delta \in \lbrace 0, 1 \rbrace$, we have 
\begin{align*} \Omega(L_{\pp}^{(\delta)}(\varphi, K)) &= \Omega(\theta_{\varphi}^{(\delta)}( x)) \Omega({\theta_{\varphi}^{(\delta)}(x)}^*) 
= \Omega(\theta_{\varphi}^{(\delta)}(x)) \Omega^{-1}(\theta_{\varphi}^{(\delta)}(x)). \end{align*} 

Let us first assume that $\delta = 1$, whence we are reduced to evaluating the sums 
\begin{align*}\Omega(\theta_{\varphi}^{(1)}(x)) &= \int_X \Omega(\sigma) \theta_{\varphi}^{(1)}(x)(\sigma) = \alpha_{\pp}^{-n}  
\sum_{A \in X_n} \Omega(A) \varphi(A \star x_n). \end{align*} 
Here, the second equality follows by the definition of $\vartheta_n = \vartheta_n (\varphi, x_n)$ of $d\vartheta(\varphi, x)$. 
Since we know by Lemma \ref{nvj} that the vectors defined by $\varphi_n = x_n \cdot \varphi$ 
are test vectors in the sense of \cite[$\S 7.1$]{FMP}, we argue (cf.~\cite[(5)]{VaMSRI}) that we have the relation
\begin{align*}\Omega ( \theta_{\varphi}^{(1)}(x) )&= \frac{h(\mathcal{O}_F)}{m(\mathcal{O}_{\pp^n})} \cdot P_{\Omega}^B(\varphi).\end{align*} 
That is, we first argue that $n$ that the natural map $\Pic(\mathcal{O}_F) \rightarrow \Pic(\mathcal{O}_{\pp^n})$ is injective. 
Since $\Omega$ factors through 
$\Pic(\mathcal{O}_{\pp^n})/ \Pic(\mathcal{O}_F) \approx \widehat{K}^{\times}/\widehat{F}^{\times} K^{\times} \widehat{\mathcal{O}}_{\pp^n}^{\times}$ 
by our assumptions, and since it is invariant on $\widehat{\mathcal{O}}_{\pp^n}^{\times}$, we can express the period integral $P_{\Omega}^B(\varphi)$ as 
\begin{align*} P_{\Omega}^B(\varphi) &:= \int_{{\bf{A}}_K^{\times}/ {\bf{A}}_F^{\times}K^{\times}} \varphi(t) \Omega(t) dt 
= m(\mathcal{O}_{\pp^n}) \sum_{t \in \widehat{K}^{\times}/\widehat{F}^{\times}\widehat{\mathcal{O}}_{\pp^n}^{\times}} \varphi(t) \Omega(t). \end{align*}
On the other hand, we can decompose our finite sum over $X_n = \Pic(\mathcal{O}_{\pp^n})$ as 
\begin{align*} \sum_{A \in X_n} \Omega(A) \varphi(A) 
&= \sum_{\tau \in \Pic(\mathcal{O}_F)} \sum_{t \in \Pic(\mathcal{O}_{\pp^n})/\Pic(\mathcal{O}_F)} \Omega(t) \varphi(\tau t),\end{align*} 
using that $\Omega$ is trivial on $\Pic(\mathcal{O}_F)$. Now, observe that the inner sum here equals
\begin{align*} \sum_{\tau^{-1} t \in \Pic(\mathcal{O}_{\pp^n})/\Pic(\mathcal{O}_F)} \Omega(\tau^{-1} t) \varphi(t) 
&= \sum_{t \in \Pic(\mathcal{O}_{\pp^n})/\Pic(\mathcal{O}_F)} \Omega(t) \varphi(t), \end{align*} and hence that
\begin{align*} \sum_{A \in X_n} \Omega(A) \varphi(A) 
&= h(\mathcal{O}_F) \sum_{t \in \Pic(\mathcal{O}_{\pp^n})/\Pic(\mathcal{O}_F)} \Omega(t) \varphi(t)
= \frac{h(\mathcal{O}_F)}{m(\mathcal{O}_{\pp^n})} \cdot P_{\Omega}^B(\varphi). \end{align*}
The stated interpolation formula is then easy to deduce from Theorem \ref{formula+1}.

Let us now assume that $\delta = 0$, whence we are reduced to evaluating the sums 
\begin{align*} \Omega (\theta_{\varphi}^{(0)}(x)) &= \alpha_{\pp}^{1-n}
\sum_{A \in X_n} \Omega(A) \varphi (A \star x_n) - \alpha_{\pp}^{-n}\sum_{A \in X_n} \Omega(A)
\varphi(A \star x_{n-1}). \end{align*} Since the vectors $\varphi_n = x_n \cdot \varphi$ are again test
vectors in the sense of \cite[$\S 7.1$]{FMP} by Lemma \ref{nvj} above, we see that the second 
sum in this term must vanish, i.e. since the conductor of the test vector $\varphi_{n-1} = x_{n-1} 
\cdot \varphi$ appearing in this term does not have the same conductor as the character 
$\Omega$. Indeed, observe that given any $u \in {\bf{A}}_K^{\times}$, which we embed into the finite set 
$B^{\times} \backslash \widehat{B}^{\times}/ \widehat{R}^{\times}$ via our choice of (optimal) embedding $K \rightarrow D$, 
we must have that  $P_{\Omega} (u \cdot \varphi) = \Omega^{-1}(u)P_{\Omega}(\varphi)$, whence 
\begin{align*} \Omega (\theta_{\varphi}^{(0)}(x)) &= \alpha_{\pp}^{1-n} \sum_{A \in X_n} \Omega(A) \varphi (A \star x_n), \end{align*} 
which reduces us to the same style of proof as given for the $\delta = 1$ case above. \end{proof} 

Although it is clear, let us state for the record the following direct consequence.

\begin{corollary}\label{trivial2}

The specialization value $\Omega(L_{\pp}^{(\delta)}(\varphi, K))$ vanishes if and only if the complex central value $L(1/2, \pi \times \Omega)$ vanishes 
when the conductor of $\Omega$ is nontrivial. \end{corollary}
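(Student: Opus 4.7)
The plan is to unpack the interpolation formula from Proposition \ref{interpolation} and verify that every factor other than the critical $L$-value is nonzero. First I would invoke the identity
\[ \Omega(L_{\pp}^{(\delta)}(\varphi, K)) = \alpha_{\pp}^{2(\delta - 1 - n)} \cdot (\varphi, \varphi) \cdot \LL(1/2, \pi \times \Omega) \]
for $\Omega$ primitive of conductor $\pp^n$ with $n \geq 1$ (which is what is meant by ``conductor nontrivial''). By Hypothesis \ref{local+1}(iii) the scalar $\alpha_{\pp}$ is a $p$-adic unit, so the prefactor $\alpha_{\pp}^{2(\delta - 1 - n)}$ is a unit. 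Since $\varphi \in \pi'$ is a nonzero vector and $(\,\cdot\,,\,\cdot\,)$ is nondegenerate, $(\varphi, \varphi) \neq 0$. This reduces the claim to showing the equivalence $\LL(1/2, \pi \times \Omega) = 0 \iff L(1/2, \pi \times \Omega) = 0$.

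Next I would go through each of the factors appearing in the definition of $\LL(1/2, \pi \times \Omega)$ and verify nonvanishing. The discriminant term $(\Delta_F / c(\Omega) \Delta_K)^{1/2}$ is trivially nonzero. The partial Hecke $L$-values $L_{S(\Omega)}(1, \eta)$, $L_{S(\pi) \cup S(\Omega)}(1, \mathbf{1}_F)$ and $L^{S(\pi)}(2, \mathbf{1}_F)$ are either complete $L$-values, which are nonzero at $s = 1$ by the classical nonvanishing theorem for Hecke $L$-functions (resp. finite by the residue of $\zeta_F$), or else they are finite products of local Euler factors of the form $(1 - \chi_v(\varpi_v) q_v^{-s})^{-1}$, which are manifestly nonzero. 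The ramification indices $e(K_v/F_v)$ are positive integers, the archimedean factors $C_v(K, \pi, \Omega)$ are nonzero by their explicit description in \cite[$\S 4$]{MaWh}, and the adjoint $L$-value $L^{S_0(\pi)}(1, \pi, \ad)$ in the denominator is nonzero by Jacquet-Shalika.

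The only remaining factor is the incomplete central value $L^{S_0(\pi)}(1/2, \pi \times \Omega)$ in the numerator. This differs from the complete value $L(1/2, \pi \times \Omega)$ only by the removal of the finitely many local Euler factors at places $v \in S_0(\pi) = S_2(\pi) \cup \{v \in S_1(\pi) : K_v/F_v \text{ is ramified}, \Omega_v \text{ is unramified}\}$. Each such local factor $L(s, \pi_v \times \Omega_v)$ is a finite expression that is nonzero at $s = 1/2$: either it equals $1$ identically (when the relevant local representation is sufficiently ramified, which is the generic case in $S_2(\pi)$) or it is an Euler product $\prod_i (1 - \alpha_{i,v} q_v^{-1/2})^{-1}$ which cannot vanish. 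Hence $L^{S_0(\pi)}(1/2, \pi \times \Omega)$ and $L(1/2, \pi \times \Omega)$ vanish simultaneously, which completes the argument. There is no essential obstacle; the only care required is the bookkeeping of local Euler factors in $S_0(\pi)$ to confirm they are genuinely nonzero at the central point.
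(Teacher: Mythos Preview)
Your proposal is correct and is essentially the argument the paper has in mind: the paper does not give a proof at all, simply recording the statement as a ``direct consequence'' of the interpolation formula of Proposition~\ref{interpolation}. You have supplied the routine verification that every factor in $(\ref{IF})$ other than $L^{S_0(\pi)}(1/2,\pi\times\Omega)$ is nonzero, which is exactly what is implicit in the paper's claim that the corollary is clear.
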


It is also clear that dividing out by $(\varphi, \varphi)$ gives an interpolation formula which does not depend on the scaling factor of $\varphi \in \pi'$. We record this as follows.

\begin{definition}
Let us in either case on $\delta \in \lbrace 0, 1 \rbrace$ define the {\it{primitive $p$-adic $L$-function}}  $\LLL_{\pp}(\pi', K) = \LLL_{\pp}^{(\delta)}{\pp}(\pi', K)$ 
associated to $\pi'$ and $X$ to be the element of $\Lambda = \mathcal{O}[[X]]$ obtained by dividing out by the inner product $(\varphi, \varphi)$, i.e. 
\begin{align}\label{pplfn} \LLL_{\pp}^{\delta}(\pi', K) &= L_{\pp}^{(\delta)}(\varphi, K)/(\varphi, \varphi).
\end{align} \end{definition} 

The following result is then easy to see from the discussion above.

\begin{corollary}\label{ninterpolation} The element $\LLL_{\pp}^{\delta}(\pi', K) \in \Lambda = \mathcal{O}[[X]]$ does not depend on the choice of scaling factor of 
$\varphi \in \pi'$, and satisfies the following interpolation property: For each primitive character $\Omega$ of $X$ of conductor $\pp^n$ with $n \geq 1$, 
we have that $\Omega(\LLL_{\pp}^{\delta}(\pi', K) ) =  \alpha_{\pp}^{2(\delta - 1 - n)} \cdot \left( h(\mathcal{O}_F)/m(\mathcal{O}_{\pp^n})\right)^2 \cdot
\mathcal{L}(1/2, \pi \times \Omega)$ in $\overline{\QQ}_p$. \end{corollary}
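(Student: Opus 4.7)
The plan is to derive both assertions directly from Proposition \ref{interpolation} together with the defining equation $(\ref{pplfn})$; the corollary is essentially a normalization statement.

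For the independence claim, I would first observe that the construction of the distributions $d\vartheta_n$ in Definitions \ref{dist0} and \ref{dist1} is manifestly $\mathcal{O}$-linear in $\varphi$, since each $\vartheta_n$ is built from evaluations $\varphi(A \star x_n)$ (and $\varphi(A \star x_{n-1})$ when $\delta = 0$). Consequently $\theta_{c\varphi}^{(\delta)}(x) = c \cdot \theta_{\varphi}^{(\delta)}(x)$ for any scalar $c \in \mathcal{O}$. Since the involution $\lambda \mapsto \lambda^*$ acts trivially on constants (it permutes group elements $\sigma \in X$ only), we get $(c \theta)^* = c \theta^*$, whence $L_{\pp}^{(\delta)}(c\varphi, K) = c^2 \cdot L_{\pp}^{(\delta)}(\varphi, K)$. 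On the other hand, the inner product on $\pi'$ coming from the product of local Tamagawa measures is bilinear, so $(c\varphi, c\varphi) = c^2 (\varphi, \varphi)$. The two factors of $c^2$ cancel in the ratio defining $\LLL_{\pp}^{\delta}(\pi', K)$, proving independence of the scaling factor of $\varphi$.

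For the interpolation property, I would apply the algebra homomorphism $\Omega: \Lambda \longrightarrow \mathcal{O}$ to both sides of $(\ref{pplfn})$. Since $(\varphi, \varphi)$ is a constant (lying in the coefficient ring $\mathcal{O}$, after suitable enlargement if necessary), it is untouched by $\Omega$ and one obtains
\begin{align*}
\Omega\bigl(\LLL_{\pp}^{\delta}(\pi', K)\bigr) &= \frac{\Omega\bigl(L_{\pp}^{(\delta)}(\varphi, K)\bigr)}{(\varphi, \varphi)}.
\end{align*}
Inserting the value supplied by Proposition \ref{interpolation}, namely $\Omega(L_{\pp}^{(\delta)}(\varphi, K)) = \alpha_{\pp}^{2(\delta - 1 - n)} \cdot (\varphi, \varphi) \cdot \mathcal{L}(1/2, \pi \times \Omega)$, the factor $(\varphi, \varphi)$ cancels and we recover the claimed formula
\begin{align*}
\Omega\bigl(\LLL_{\pp}^{\delta}(\pi', K)\bigr) &= \alpha_{\pp}^{2(\delta - 1 - n)} \cdot \mathcal{L}(1/2, \pi \times \Omega) \quad \text{in } \overline{\QQ}_p.
\end{align*}

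There is no substantive obstacle here, as both claims are formal consequences of the homogeneity of the construction in $\varphi$ and of Proposition \ref{interpolation}. The only minor technical point worth flagging is the membership $\LLL_{\pp}^{\delta}(\pi', K) \in \mathcal{O}[[X]]$: dividing by $(\varphi, \varphi)$ is legitimate in $\Lambda$ provided $(\varphi, \varphi) \in \mathcal{O}^\times$, which one can arrange (possibly after enlarging $\mathcal{O}$) by invoking the integral normalization of Lemma \ref{normalization}; otherwise the identity is to be read in $\Lambda \otimes_{\mathcal{O}} \operatorname{Frac}(\mathcal{O})$, which does not affect the interpolation formula.
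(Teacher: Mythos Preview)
Your proof is correct and follows exactly the approach the paper intends: the paper offers no explicit proof beyond ``easy to see from the discussion above,'' and your argument simply spells out that the homogeneity of $\theta_{\varphi}^{(\delta)}$ and $(\varphi,\varphi)$ in $\varphi$, together with Proposition~\ref{interpolation} and the defining equation~$(\ref{pplfn})$, yields both claims. Your flag about the integrality of $(\varphi,\varphi)^{-1}$ is a fair technical caveat that the paper leaves implicit.
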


\subsection{The case of $k=1$} 

We now extend the construction given above to give a $p$-adic interpolation series for the setting of $k=1$, 
i.e. where $\epsilon = \epsilon(1/2, \pi \times \Omega)$ is generically equal to  $-1$. 

Keep all of the setup leading to Theorem \ref{YZZ} above, so that $\BB$ is the indefinite quaternion algebra which is ramified at all real places 
of $F$ except for one fixed place $\tau$, as well as the finite places corresponding to prime divisors of the inert level $\NN^{-}$. 
By Hypothesis \ref{pi}, we know that $\BB$ is split at $\pp$. Hence, we can and do fix an isomorphism $\BB_{\pp}^{\times} \approx \GL(F_{\pp})$.
Let $M = \lbrace M_U \rbrace_U$ be the Shimura curve over $F$ associated $\BB$. 
Let $A$ be an abelian variety defined over $F$ which is parametrized by $M$. 
Recall that according to \cite[$\S 3.2.2$]{YZZ}, we have an an automorphic representation 
$\pi_A = \Hom^0(J, A) = \varinjlim_H \Hom^0(J_H, A) =  \varinjlim_H \Hom_F(J_H, A) \otimes_{\ZZ} {\QQ}$ of $\BB^{\times}(\AF)$ defined over $\QQ$, 
and moreover that this representation decomposes as a product $\pi_A = \otimes_v \pi_{A, v}$ of absolutely irreducible representations $\pi_{A, v}$ of $\BB_v^{\times}$. 
Recall as well that for any choice of vector $\varphi \in \pi_A$ and CM point $Q \in M^{K^{\times}}(K^{\ab})$, we have that $\varphi(Q)$ is a well-defined element 
of the Mordell-Weil group $A(K^{\ab})$, where $K^{\ab}$ denotes the maximal abelian extension of $K$. 
Let us define $\mathcal{O}$ in this setting to be the tensor product $A(K^{\ab}) \otimes_{\ZZ} {\ZZ}_p$. 
We shall assume from now on that $\pi_A$ has trivial central character, hence that $\pi_A = \pi_A^{\vee}$ is self-dual. 
Thus $A$ is principally polarized, and we can fix an identification $A \approx A^{\vee}$. Granted Hypotheses \ref{local+1} and $\ref{pi}$ 
hold for $\pi = \pi_A$, the construction of the $p$-adic $L$-function $L_{\pp}(\pi, K)$ given above for $k=0$ extends formally, as we now explain. 

\subsubsection{The case of $\delta = 0$}

Assume again that the local order $R_{\pp} \subset \BB_{\pp}$ is Eichler of level $\pp^{\delta} = 1$, i.e. that $R_{\pp}$ is maximal. 
Recall again that we fix $V$ a simple left $\BB_{\pp}$-module for which $V \approx F_{\pp}^2$ as an $F_{\pp}$-vector space, 
that we let $\LL(V)$ denote the set of $\OFP$-lattices in $V$, and that we have the bijection $(\ref{d0})$.
The arguments of Lemma \ref{sc0}, Corollary \ref{6.5C}, and Proposition \ref{DR0} above imply the following result. 

\begin{corollary}\label{DR0-1} 

Let $\varphi \in \pi_A$ be a nonzero $\pp$-ordinary decomposable vector whose component at $\pp$ is fixed by $R_{\pp}^{\times}$. 
Let $(x_n)_{n \geq 0}$ denote the sequence of classes in $\BB_{\pp}^{\times} /R_{\pp}^{\times}$ as defined above. 
Writing $T_{\pp}^l$ to denote the lattice Hecke operator defined above, 
let $a_{\pp} = a_{\pp}(\pi_A)$ denote the eigenvalue of $T_{\pp}^l$ acting on $\varphi$, with $\alpha_{\pp} = \alpha_{\pp}(\pi_A)$ the unit root of the Hecke polynomial 
$t^2 - a_{\pp}t + q_{\pp}$. Then, the system of mappings $\lbrace \phi_n \rbrace_{n \geq 1} = \lbrace \phi_n(x_n, \varphi) \rbrace_{n \geq 1}$ defined by 
\begin{align}\label{d0-1} \phi_n: X_n &\longrightarrow \mathcal{O}, 
~~~ A \longmapsto \alpha_{\pp}^{1-n} \cdot \varphi(A \star x_n) - \alpha_{\pp}^{-n} \cdot 
\varphi(A \star x_{n-1}) \end{align} determines a distribution on the profinite group $X = \varprojlim X_n$. \end{corollary}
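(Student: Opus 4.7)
The plan is to argue that the proof of Proposition \ref{DR0} goes through verbatim, with the only change being the target group of the mappings. Concretely, I would verify the distribution relation
\begin{align*}
\phi_n(A) &= \sum_{\gamma \in Z_n^\times/Z_{n+1}^\times} \phi_{n+1}(\gamma A)
\end{align*}
for all $n$ sufficiently large and all $A \in X_n$, where I identify $\ker(\pi_{n+1,n})$ with $Z_n^\times/Z_{n+1}^\times$ for such $n$ exactly as in the proof of Proposition \ref{DR0} (using that $\mathcal{O}_F^\times \subset \mathcal{O}_{\pp^{n+1},\pp}^\times$ stabilizes for $n \gg 0$).

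Next I would expand the right-hand side using the definition $(\ref{d0-1})$ of $\phi_{n+1}$:
\begin{align*}
\sum_{\gamma \in Z_n^\times/Z_{n+1}^\times} \phi_{n+1}(\gamma A)
&= \alpha_{\pp}^{-n} \sum_{\gamma} \varphi(\gamma A \star x_{n+1}) - q_\pp \alpha_\pp^{-(n+1)} \varphi(A \star x_n),
\end{align*}
using that the sum over $\gamma \in Z_n^\times/Z_{n+1}^\times$ has $q_\pp$ terms and that $\varphi(A \star x_{n-1})$ is $Z_n^\times$-invariant (so the second piece collapses to $q_\pp$ copies). The trace identity of Corollary \ref{6.5C} (which is a formal consequence of Corollary \ref{6.6} and Lemma \ref{p0}, and which I observe only uses the $T_\pp^l$-eigenproperty and the fact that $\varphi$ has trivial central character, both valid for $\varphi \in \pi_A$) then rewrites the first sum as
\begin{align*}
\sum_{\gamma} \varphi(\gamma A \star x_{n+1}) &= T_\pp^l \varphi(A \star x_n) - \varphi(A \star x_{n-1}) = a_\pp \varphi(A \star x_n) - \varphi(A \star x_{n-1}).
\end{align*}
Substituting and using the factorization identity $a_\pp \alpha_\pp^{-n} - q_\pp \alpha_\pp^{-(n+1)} = \alpha_\pp^{1-n}$ (coming from $t^2 - a_\pp t + q_\pp = (t-\alpha_\pp)(t-\beta_\pp)$), the right-hand side collapses to $\alpha_\pp^{1-n}\varphi(A \star x_n) - \alpha_\pp^{-n}\varphi(A \star x_{n-1}) = \phi_n(A)$, as required.

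There is essentially no obstacle here beyond bookkeeping: the argument is purely formal once one observes that the target $A(\overline{F})_{\QQ}$ is a $\QQ$-vector space (so scalars like $a_\pp$, $\alpha_\pp^{-1}$, $q_\pp$ all act), that $\pi_A$ is $\pp$-ordinary with $T_\pp^l$-eigenvalue $a_\pp$ on our fixed $\varphi$, and that the central character is trivial (so the step identifying $\varphi(\pr_l(x_n)) = \varphi(\pr_u(x_n))$ in the derivation of Corollary \ref{6.5C} still applies). The only mildly delicate point to check is that the $\pp$-ordinarity hypothesis makes sense for $\pi_A$ in the sense of Hypothesis \ref{local+1}, i.e.\ that the unit root $\alpha_\pp(\pi_A)$ of $t^2 - a_\pp t + q_\pp$ is genuinely a $p$-adic unit and hence invertible in the coefficient ring where the distribution values are compared; this is precisely the content of Hypothesis \ref{local+1}(iii) applied to $\pi = \pi_A$.
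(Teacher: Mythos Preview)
Your proposal is correct and follows exactly the approach indicated in the paper, which simply states that Corollary~\ref{DR0-1} is a formal consequence of Lemma~\ref{sc0}, Corollary~\ref{6.5C}, and Proposition~\ref{DR0}. One trivial slip: in your justification for collapsing the second sum you write ``$\varphi(A \star x_{n-1})$ is $Z_n^{\times}$-invariant,'' but you mean $x_n$ (as your displayed formula correctly has), since it is $x_n$ that has stabilizer $Z_n^{\times}$.
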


\subsubsection{The case of $\delta = 1$}

Let us now assume that the order $R_{\pp} \subset \BB_{\pp}$ is Eichler of level $\pp^{\delta} = \pp$. 
Recall that in this case, we write $\LL_1 = \LL_1(V)$ to denote the set of $1$-lattices of $\LL = \LL(V)$. 
We then fix a $1$-lattice $L_0 = (L_0(0), L_0(1))$ whose stabilizer under the transitive action of $\BB_{\pp}^{\times}$ is equal to $R_{\pp}^{\times}$, 
which allows us to define the bijection $(\ref{d1})$. Again, we fix a $K_{\pp}$-basis $e$ of $V$, and for each integer $n \geq 1$ write $L_n = L_n(e)$
to denote the lattice in $\LL = \LL(V)$ defined by $L_n = \pp^{- \lfloor \frac{n}{2} \rfloor/ 2} Z_n e$. 
For each $n \geq 1$, let $J_n = J_n(e)$ to denote the $1$-lattice in $\mathcal{L}_1 = \mathcal{L}_1(V)$ defined by 
\begin{align*} J_n &= \begin{cases} (L_{n-1}, L_n) &\text{ if $n \equiv 0 \mod 2$} \\ (L_n, L_{n-1}) &\text{ if $n \equiv 1 \mod 2$}\end{cases} \end{align*} 
Let $x_n = x_n(e) \in \BB_{\pp}^{\times}/R_{\pp}^{\times}$ be the class corresponding to $J_n$ under $(\ref{d1})$. 
The arguments of Lemma \ref{p1}, Corollary \ref{r1}, and Proposition \ref{DR1} imply the following result.

\begin{corollary}\label{DR1-1} 

Let $\varphi \in \pi_A$ be a nonzero decomposable vector whose component at $\pp$ is fixed by $R_{\pp}^{\times}$. 
Let $(x_n)_{n \geq 0}$ denote the sequence of classes in $\BB_{\pp}^{\times} /R_{\pp}^{\times}$ defined by the $1$-lattices $J_n$ above. 
Suppose that $\varphi$ is a $\pp$-ordinary eigenvector for both of the operators $T_{\pp}^l$ and $T_{\pp}^{u}$
with common (unit) eigenvalue $\alpha_{\pp} = \alpha_{\pp}(\pi_A)$ Then, the system $\lbrace \phi_n \rbrace_{n \geq 1} = \lbrace \phi_n(x_n, \varphi) \rbrace_{n \geq 1}$ 
of mappings \begin{align}\label{d1-1} \phi_n: X_n &\longrightarrow \mathcal{O}, ~~~ A \longmapsto \alpha_{\pp}^{-n} \cdot \varphi(A \star x_n)_{n \geq 1} \end{align} 
defines a distribution on the profinite group $X = \varprojlim X_n$. \end{corollary}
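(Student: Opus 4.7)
The plan is to mirror the proof of Proposition \ref{DR1} essentially verbatim, observing that the only structural difference from the $k=0$ setting is that $\varphi$ now takes values in $A(\overline{F})_{\QQ}$ rather than $\mathcal{O}$. Since the trace operator $\Tr$ and the Hecke operators $T_{\pp}^l$, $T_{\pp}^u$ act on the $A(\overline{F})_{\QQ}$-valued representation $\pi_A$ via correspondences on the Jacobians $J_H$ (as constructed in \cite[$\S 3.2$]{YZZ}), the same formal manipulations carry through without modification.

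First, I would reduce the distribution relation to the statement that, for each sufficiently large integer $n$ and each $A \in X_n$,
\[ \phi_n(A) = \sum_{\gamma \in Z_n^{\times}/Z_{n+1}^{\times}} \phi_{n+1}(\gamma A), \]
exactly as in the proof of Proposition \ref{DR0}. The point is that for $n$ sufficiently large the global units $\mathcal{O}_F^{\times} = \mathcal{O}_{\pp^n}^{\times}$ are absorbed into the local unit group $\mathcal{O}_{\pp^{n+1}, \pp}^{\times}$, so that the kernel of the transition map $\pi_{n+1, n}\colon X_{n+1} \to X_n$ is identified with $Z_n^{\times}/Z_{n+1}^{\times}$.

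Next, inserting the definition $(\ref{d1-1})$ of $\phi_{n+1}$, the right-hand side becomes
\[ \alpha_{\pp}^{-(n+1)} \sum_{\gamma \in Z_n^{\times}/Z_{n+1}^{\times}} \varphi(\gamma A \star x_{n+1}). \]
Since the trace operator acts only on the component at $\pp$, and the action of $A \in X_n$ (viewed as an adele class in $\widehat{K}^{\times}$) commutes with it, I would apply Corollary \ref{r1} to identify this inner sum with either $T_{\pp}^l \varphi(A \star x_n)$ or $T_{\pp}^u \varphi(A \star x_n)$, depending on the parity of $n$. The $\pp$-ordinarity hypothesis, namely that $\varphi$ is a common eigenvector for $T_{\pp}^l$ and $T_{\pp}^u$ with the single unit eigenvalue $\alpha_{\pp}$, then collapses the expression to $\alpha_{\pp}^{-(n+1)} \cdot \alpha_{\pp} \varphi(A \star x_n) = \alpha_{\pp}^{-n} \varphi(A \star x_n) = \phi_n(A)$, as required.

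The only step requiring any comment is the verification that $T_{\pp}^l$ and $T_{\pp}^u$ act on $\pi_A = \Hom^0(J, A)$ by the double coset operators described in Remark \ref{6.10}; this is a direct consequence of the interpretation of $\pi_A$ in \cite[$\S 3.2$]{YZZ} as an automorphic representation of $\BB^{\times}$ over $\QQ$, together with the compatibility of the Hecke correspondences on $\lbrace J_H \rbrace_H$ with the double coset decompositions recorded in Remark \ref{6.10}. Once this compatibility is noted, the argument is purely formal: no new input beyond Corollary \ref{r1} and the ordinarity hypothesis is needed, and the fact that $\varphi$ takes geometric rather than scalar values plays no role. Accordingly, I do not anticipate any genuine obstacle, only the bookkeeping required to transcribe the proof of Proposition \ref{DR1} into the $A(\overline{F})_{\QQ}$-valued setting.
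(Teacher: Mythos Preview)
Your proposal is correct and follows exactly the approach the paper itself indicates: the paper states that Corollary \ref{DR1-1} is obtained ``from the arguments of Lemma \ref{p1}, Corollary \ref{r1}, and Proposition \ref{DR1}'' without further elaboration, and you have simply written out those arguments in the $A(\overline{F})_{\QQ}$-valued setting. Your added remark on the compatibility of the Hecke correspondences on $\lbrace J_H \rbrace_H$ with the double coset operators of Remark \ref{6.10} is a reasonable clarification, but the paper does not single this out as requiring justification.
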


\subsubsection{Definition of the interpolation series}

We have now defined in either case on the exponent $\delta \in \lbrace 0, 1 \rbrace$ an $\mathcal{O}$-valued distribution 
\begin{align*} d \phi^{(\delta)} (\varphi, x(e)) &= \lbrace \phi_n^{(\delta)}(\varphi, x_n(e)) \rbrace_{n \geq 1} \end{align*} on 
$X$ whose corresponding element in $\mathcal{O} [[X]] = \varprojlim_n \mathcal{O}[X_n]$ 
we denote by \begin{align*} \Phi_{\varphi}^{(\delta)} = \Phi_{\varphi}^{(\delta)}(x(e)) = (\Phi_{\varphi}^{(\delta)}(x_n(e))). \end{align*} 
These elements depend on the choice of $K_{\pp}$-basis $e$ of $V$ in the same way as described for Lemma \ref{choiceofe}. We 
again account for this ambiguity by considering the elements
\begin{align*} \Phi_{\varphi}^{(\delta)}(x(e)) \Phi_{\varphi}^{(\delta)} (x(e))^* \text{~~ in ~$\mathcal{O}[[X]]$}, \end{align*} 
where $\Phi_{\varphi}^{\delta} (x(e))^*$ denotes the image of $\Phi_{\varphi}^{\delta} (x(e))$ under the involution of $\mathcal{O}[[X]]$ 
induced by sending elements $\sigma \in X$ to their inverses $\sigma^{-1} \in X$.

\begin{definition}

Let $\varphi \in \pi_A$ be a nonzero decomposable vector satisfying Hypotheses \ref{pi} and \ref{local+1} above. 
Let $e$ be any $K_{\pp}$-basis of $V$. Let $x(e) = (x_n(e))_{n \geq \delta}$ be either the sequence of classes 
defined above. Let us now define the well-defined elements \begin{align}\label{intelt-1}  D_{\pp}^{(\delta)}(\varphi, K) 
&= \Phi_{\varphi}^{(\delta)}(x(e)) \Phi_{\varphi}^{(\delta)}  (x(e))^* \in  \mathcal{O} [[X]].\end{align} \end{definition}

\subsubsection{Interpolation}

The elements defined in $(\ref{intelt-1})$ satisfy the following interpolation property. Recall that we write $\langle \cdot, \cdot \rangle$ 
to denote the $L$-linear N\'eron-Tate pairing \begin{align*} \langle \cdot, \cdot \rangle_L: A(\overline{F})_{\QQ} \otimes_R
A(\overline{F})_{\QQ} &\longrightarrow L \otimes_{\QQ} {\CC} \end{align*} defined in \cite[$\S 1$]{YZZ}. 
Note that this construction extends in a natural way to define an $\mathcal{O}_L$-linear N\'eron-Tate height pairing 
\begin{align*} \langle \cdot, \cdot \rangle_{\mathcal{O}_L}: A(\overline{F})_{\QQ} \otimes_R A(\overline{F})_{\QQ} 
&\longrightarrow \mathcal{O}_L \otimes_{\QQ} {\CC}. \end{align*} 
We argue that these pairings also extend in a natural way by ${\ZZ}_p$-linearity to define pairings on the elements in 
$\mathcal{O}$ we have considered above, and hence that we can derive the following straightforward consequence from the discussion above.

\begin{proposition}\label{interpolation-1} 

Let $\Omega$ be a primitive ring class character of $X$ of conductor $\pp^n$ with $n \geq 1$. Then, for each choice of exponent $\delta \in \lbrace 0, 1 \rbrace$, 
we have the following interpolation formula for the elements $D_{\pp}^{(\delta)}(\varphi, K)$ defined in $(\ref{intelt-1})$ above composed with
the $\mathcal{O}_L$-linear N\'eron-Tate height pairing $\langle ~,~\rangle_L$,  
\begin{align*}  \langle ~,~\rangle_L &\circ \Omega(D_{\pp}^{(\delta)}(\varphi, K)) \\ 
&= \alpha_{\pp}(\pi_A)^{-2(1-\delta - n)} \cdot \left( \frac{h(\mathcal{O}_F)}{m(\mathcal{O}_{\pp^n})}\right)^2 \cdot
 \frac{\zeta(2) L'(1/2, \pi_A \times \Omega)}{4 L(1, \eta)^2 L(1, \pi_A, \ad)}
\cdot  \alpha (x_n \cdot \varphi, x_n \cdot \varphi) . \end{align*} \end{proposition} 

\begin{proof} The proof is formally identical to that of Proposition \ref{interpolation} above, 
using Theorem \ref{YZZ} in lieu of Theorem \ref{formula+1} for the interpolation values. \end{proof}

\begin{remark}[Acknowledgements.] I should like to thank Christophe Cornut first and foremost for extremely helpful comments about the main construction. 
I should also like to thank Masataka Chida, Henri Darmon, Daniel Disegni, Fred Diamond, Toby Gee, Min-Lun Hsieh, Kimball Martin, Udi de Shalit, Xinxi Yuan, 
Shouwu Zhang, Wei Zhang, and an anonymous referee for helpful comments and exchanges. \end{remark}

\end{document}